\documentclass{article}

\usepackage{graphicx}
\usepackage{psfrag}
\usepackage{amsfonts}
\usepackage{amsmath,amssymb}
\usepackage{float}
\usepackage{color}
\usepackage{multirow}
\usepackage{subfig}
\graphicspath{{./Figures/}}

 \textwidth16cm
 \oddsidemargin-0.4cm
 \textheight23cm
 \topmargin-1.5cm

\newcommand{\QED}{\hspace*{\fill}\rule{2.5mm}{2.5mm}}
\newtheorem{theorem}{Theorem}[section]
\newtheorem{lemma}[theorem]{Lemma}

\newtheorem{corollary}[theorem]{Corollary}

\newenvironment{proof}{{\bf Proof\ }}{\QED\\}

\numberwithin{equation}{section}

\newcommand{\ds}{\displaystyle}
\renewcommand{\vec}[1]{\mbox{\boldmath $#1$}}
\newcommand{\R}{\mathbb{R}}
\newcommand{\C}{\mathbb{C}}

\renewcommand{\le}{\leqslant}
\renewcommand{\ge}{\geqslant}
\newcommand {\go}  {{\cal O}}
\newcommand {\po}  {\mbox{\tiny{${\cal O}$}}}
\newcommand {\gol}[1]  {{\cal O}(L^{#1})}
\newcommand {\pr}  {\backsimeq}
\newcommand {\vpr}  {\varpropto\!\!}
\newcommand{\w}{\omega}
\newcommand{\eps}{\varepsilon}
\newcommand{\wm}{\omega_{m}}
\newcommand{\wmm}{\omega_{M}}
\newcommand{\km}{k_{m}}
\newcommand{\kmm}{k_{M}}

\newcommand{\xo}{x_0}
\newcommand{\xoo}{x_0^2}
\newcommand{\ac}{|c|}
\newcommand{\z}{(\xoo+4\nu^2k^2)}

\newcommand{\soc}{\mathrm{s}(c)}

\renewcommand{\max}{\mathrm{max}}

\renewcommand{\Re}{\mathrm{Re} \,}
\renewcommand{\Im}{\mathrm{Im} \,}

\newcommand{\tp}{\tilde{p}}
\newcommand{\tq}{\tilde{q}}
\newcommand{\tk}{\tilde{k}}
\newcommand{\tx}{\tilde{x}}
\newcommand{\tz}{\tilde{z}}

\newcommand{\bk}{\breve{k}}
\newcommand{\bx}{\breve{x}}
\newcommand{\bz}{\breve{z}}
\newcommand{\bw}{\breve{\omega}}
\newcommand{\yo}{Y_0}

\newcommand{\td}{\widetilde{D}}
\newcommand{\cc}{{\cal C}}
\newcommand{\ccsw}{{\cal C}_{sw}}
\newcommand{\ccse}{{\cal C}_{se}}
\newcommand{\ccn}{{\cal C}_{n}}
\newcommand{\cce}{{\cal C}_{e}}
\newcommand{\ccw}{{\cal C}_{w}}
\newcommand{\ccwi}{{\cal C}_{w}^\infty}

\title{Optimized Schwarz Waveform Relaxation for\\ Advection Reaction
  Diffusion Equations in Two Dimensions}

\author{Daniel Bennequin \thanks{Institut de Math\'ematiques de Jussieu, Universit\'e Paris 7, B\^atiment Sophie Germain,  
75205 Paris Cedex 13, France.
}\and Martin J. Gander\thanks{Section de Math\'ematiques, Universit\'e de Gen\`eve, 2-4 rue du Li\`evre, CP 240, CH-1211
Gen\`eve, Switzerland.} \and Loic Gouarin\thanks{Laboratoire de Math\'ematiques d'Orsay, Universit\'e Paris-Sud 11, 91405 Orsay Cedex, France} \and
  Laurence Halpern\thanks{Laboratoire Analyse, G\'eom\'etrie \& Applications
UMR  7539 CNRS, Universit\'e PARIS 13, 93430 VILLETANEUSE,
FRANCE}}

\begin{document}
\maketitle
\begin{abstract}
Optimized Schwarz Waveform Relaxation methods have been developed over
the last decade for the parallel solution of evolution problems.  They
are based on a decomposition in space and an iteration, where only
subproblems in space-time need to be solved. Each subproblem can be
simulated using an adapted numerical method, for example with local
time stepping, or one can even use a different model in different
subdomains, which makes these methods very suitable also from a
modeling point of view. For rapid convergence however, it is important
to use effective transmission conditions between the space-time
subdomains, and for best performance, these transmission conditions
need to take the physics of the underlying evolution problem into
account.  The optimization of these transmission conditions leads to
mathematically hard best approximation problems of homographic
functions.  We study in this paper in detail the best approximation
problem for the case of linear advection reaction diffusion equations
in two spatial dimensions.  We prove comprehensively best
approximation results for transmission conditions of Robin and Ventcel
(higher order) type, which can also be used in the various limits for
example for the heat equation, since we include in our analysis a
positive low frequency limiter both in space and time. We give for
each case closed form asymptotic values for the parameters which can
directly be used in implementations of these algorithms, and which
guarantee asymptotically best performance of the iterative methods.
We finally show extensive numerical experiments including cases not
covered by our analysis, for example decompositions with cross
points. In all cases, we measure performance corresponding to our
analysis.
\end{abstract}

{\bf Keywords} Domain decomposition, waveform relaxation, best approximation.

{\bf 2010 Mathematics Subject Classification} 65M55,  	65M15.
\section{Introduction}

Schwarz waveform relaxation algorithms are parallel algorithms to
solve evolution problems in space time. They were invented
independently in \cite{GS:1998:STC} and \cite{Giladi:2002:STD}, see
also \cite{Gander:2002:OSW}, based on the earlier work in
\cite{Bjorhus:1995:DDS}, and are a combination of the classical
waveform relaxation algorithm from \cite{Lelarasmee:1982:WRM} for the
solution of large scale systems of ordinary differential equations,
and Schwarz methods invented in \cite{Schwarz:1870:UGA}.  Modern
Schwarz methods are among the best parallel solvers for steady partial
differential equations, see the books
\cite{Smith:1996:DPM,Quarteroni:1999:DDM,Toselli:2004:DDM} and
references therein. Waveform relaxation methods have been analyzed for
many different classes of problems recently: for fractional
differential equations see \cite{Jiang:2013:WRM}, for singular
perturbation problems see \cite{Zhao:2012:OTC}, for differential
algebraic equations see \cite{Bai:2011:OCC}, for population dynamics
see \cite{GerardoGiorda:2008:BWR}, for functional differential
equations see \cite{ZubikKowal:1999:WRF}, and especially for partial
differential equations, see
\cite{Janssen:1996:MWR,Janssen:1996:MWR2,VanLent:2002:MWR} and the
references therein.  For the particular form of Schwarz waveform
relaxation methods, see \cite{Anfray:2002:NTC,
  Gander:2005:OSW, Daoud:2007:OSW, Daoud:2007:OSWFB, Jiang:2008:SWR,
  Zhang:2010:SWR,Garbay:2010:AOA, Ltaief:2010:AOA, caetano:2010:swr,
  Wu:2011:CAO,Liu:2011:WRF, Liu:2012:APW}.  These algorithms have also
become of interest in the moving mesh R-refinement strategy, see
\cite{Haynes:2006:ASW, Haynes:2008:AMM, Gander:2012:DDA}, and
references therein.

Schwarz waveform relaxation methods however exhibit only fast
convergence, when optimized transmission conditions are used, as first
shown in \cite{Gander:1999:OCO}, and then treated in detail in
\cite{Martin:2005:AOS,Gander:2007:OSW,Bennequin:2009:HBA,Binh:2010:SWR}
for diffusive problems, and \cite{Gander:2003:OSWW,Gander:2004:ABC}
for the wave equation, see also \cite{Gander:2004:OWR,Gander:2009:OWR}
for circuit problems, and \cite{Audusse:2011:OSW} for the primitive equations. With optimized transmission conditions, the
algorithms can be used without overlap, and optimized transmission
conditions turned out to be important also for Schwarz algorithms
applied to steady problems, for an overview, see
\cite{Gander:2006:OSM} and references therein.  In order to make such
algorithms useful in practice, one needs simply to use formulas for
the optimized parameters, which can then be put into implementations
and lead to fast convergent algorithms, without having to think about
optimizing transmission conditions ever again.

The purpose of this paper is to provide such formulas for a general
evolution problem of advection reaction diffusion type. The analysis
required to solve the associated optimization problems is substantial,
and only asymptotic techniques lead to easy to use, closed form
formulas. We also use and extend more general, abstract results for
best approximation problems, which appeared in
\cite{Bennequin:2009:HBA}. In particular, we remove a compactness
condition which remained in \cite{Bennequin:2009:HBA} in the case of
overlap. We obtain with our analysis the best choice of Robin
transmission conditions, and also higher order transmission conditions
called Ventcel conditions (after the Russian mathematician
A. D. Ventcel, also spelled Venttsel, Ventsel or Wentzell
\cite{ventcel}), both for the case of overlapping and non-overlapping
algorithms. We give complete proofs of optimality, generalizing
one-dimensional results given in \cite{Gander:2007:OSW} and
\cite{Bennequin:2009:HBA}. We also illustrate our results with
numerical experiments.

\section{Model Problem and Main Results}
\label{sec:modelproblem}

We study the optimized Schwarz waveform relaxation algorithm for the
time dependent advection reaction diffusion equation in $\Omega\subset
\R^2$,
\begin{equation}\label{eq:ModelProblem}
  \mathcal{L}u:=\partial_tu +\vec{a}\cdot \nabla u -\nu \Delta u+bu=f,\quad
    \mbox{in $\Omega \times (0,T)$,}
\end{equation}
where $\nu>0$, $b\ge 0$ and $\vec{a}=(a,c)^T$, and suitable boundary
conditions need to be prescribed on the boundary of $\Omega$, which
will however not play an important role, and we will not mention this
further. In order to describe the Schwarz waveform relaxation
algorithm, we decompose the domain into J non-overlapping subdomains
$U_j$, and then enlarge them, if desired, in order to obtain an
overlapping decomposition given by subdomains $\Omega_j$.  The
interfaces between subdomain $\Omega_i$ and $\Omega_j$ are then
defined by $\Gamma_{ij}=\partial \Omega_i \cap\overline{U}_j$.  The
algorithm for such a decomposition calculates then for $n=1,2,\ldots$
the iterates $(u_j^n)$ defined by
\begin{equation}\label{eq:ModifiedSchwarz}
  \begin{array}{rcl}
    \mathcal{L}u_i^n & = & f\quad \mbox{in $\Omega_i\times(0,T)$}\\
	u_i^n(\cdot,\cdot,0)  & = & u_0 \quad \mbox{in $\Omega_i$},\\
    \mathcal{B}_{ij}u_i^n  & = &
    \mathcal{B}_{ij}u_j^{n-1} \mbox{ on $\Gamma_{ij}\times (0,T)$},
  \end{array}
\end{equation}
where the $\mathcal{B}_{ij}$ are linear differential operators in
space and time, and initial guesses $\mathcal{B}_{ij}u_j^0$ on
$\Gamma_{ij}\times (0,T) $ need to be provided.

There are many different choices for the operators ${\cal B}_{ij}$.
Choosing for ${\cal B}_{ij}$ the identity leads to the classical Schwarz
waveform relaxation method, which needs overlap for convergence.
Zeroth or higher order differential conditions lead to optimized
variants, which also converge without overlap, see for example
\cite{Gander:2007:OSW} and \cite{Bennequin:2009:HBA}, where a complete
analysis in one dimension was performed. We study here in detail the
case where the transmission operators are of the form
\begin{equation}\label{eq:ventcel}
 {\cal B}_{ij} =(\nu\nabla-
              \frac{\vec{a}}{2})\cdot \vec{n_i}
              +\frac{s}{2},\qquad
     s= p + q (\partial_t+c \partial_y-\nu \Delta_y).
\end{equation}
If $q=0$, these are Robin transmission conditions, whereas for $q\ne
0$, they are called Ventcel transmission conditions. In the ideal
case where $\Omega=\R^2$ is decomposed into two half spaces
$\Omega_1=(-\infty,L)\times \R$ and $\Omega_2=(0,\infty)\times \R$, we
can compute explicitly the error in each subdomain at step $n$ as a
function of the initial error. We use Fourier transforms in time and
in the direction $y$ of the boundary, with $\omega$ and $k$ the
Fourier variables. The convergence factor $\rho(\omega,k,p,q,L)$ of
algorithm (\ref{eq:ModifiedSchwarz}), which gives precisely the error
reduction of each error component in $\omega$ and $k$ for a given
choice of parameters $p$ and $q$ and overlap $L$, can in this case be
computed in closed form (see \cite{Gander:2007:OSW}),
\begin{equation}\label{GeneralConvFactor}
  \rho(\omega,k,p,q,L)=\frac{p+q(\nu k^2+i (\omega+c k))-
  \sqrt{x_0^2+4\nu(\nu k^2+i (\omega+c k))}}
 {p+q(\nu k^2+i (\omega+c k))+\sqrt{x_0^2+4\nu(\nu k^2+i (\omega+c k))}}
    e^{- \frac{L\sqrt{x_0^2+4\nu(\nu k^2+i (\omega+c k))}}{2\nu}},
\end{equation}
where we denote by $\sqrt{\ }$ the standard branch of the square root
with positive real part, $x_0^2:=a^2+4\nu b$ and
$i=\sqrt{-1}$. Computing on a (uniform) grid, we assume that the
maximum frequency in space is $\kmm=\frac{\pi}{h}$ where $h$ is the
local mesh size in $x$ and $y$, and the maximum frequency in time is
$\wmm=\frac{\pi}{\Delta t}$, and that we also have estimates for the
lowest frequencies $\km$ and $\wm$ from the geometry, see for example
\cite{Gander:2006:OSM} for estimates, or for a more precise analysis
see \cite{Gander:2010:OTI}.  We also assume that the mesh sizes in
time and space are related either by $\Delta t =C_h h$, or $\Delta t
=C_h h^2$, corresponding to a typical implicit or explicit time
discretization of the problem.

Defining $D:=\{(\omega,k), \wm\le |\omega|\le \wmm,\,\km\le |k|\le
\kmm \}$, the parameters $(p^*,q^*)$ which give the best convergence
factor are solution of the best approximation problem
\begin{equation}\label{BestApprox}
   \inf_{(p,q)\in\C^2}\ \sup_{(\omega,k)\in
     D}|\rho(\omega,k,p,q,L)|=\sup_{(\omega,k)\in D}|\rho(\omega,k,p^*,q^*,L)|
     =: \delta^*(L).
\end{equation}
To motivate the reader, we outline in Table \ref{tab:convfactor} the
asymptotic behavior of the convergence factors, which can be achieved
by optimization.
We use here the notation $Q\pr h$ or $Q =\vpr(h)$ if there exists $C
\ne 0$ such that $Q\sim C h$.
\begin{table}
\centering
\renewcommand{\arraystretch}{1.5}
\begin{tabular}{|c|c|c|}
\hline
Method  & No overlap & Overlap $L$\\
\hline
\hline
 Dirichlet &
1& $1- \vpr(L)$
\\
\hline
Robin &
$1- \vpr(\sqrt{h})$& $1- \vpr(\sqrt[3]{L})$
\\
\hline
Ventcel &
 $1- \vpr(\sqrt[4]{h})$ & $1- \vpr(\sqrt[5]{L})$
\\
\hline
\end{tabular}
\caption{The asymptotically optimized convergence factors $\delta^*(L)$. }
\label{tab:convfactor}
\end{table}

In what follows, we will often use the quantity
$$
  \bar{k} =|c|\,\frac{\sqrt{(c^2+x_0^2)^2+16\nu^2\wm^2}
    -(c^2+x_0^2)} {8\nu^2\wm}.
$$
By a direct calculation, we see that $0\le \bar{k} |c|\le \wm$, and we
define the function
\begin{equation}\label{phi}
    \begin{array}{l}
    \varphi(k,\xi):=2\sqrt{2}\sqrt{\sqrt{(x_0^2+4\nu^2 k^2)^2+16\nu^2\xi^2}
    +x_0^2+4\nu^2 k^2},
    \end{array}
\end{equation}
and the constant
\begin{equation}\label{ConstA}
  A = \left\{
    \begin{array}{ll}
    \varphi(\bar{k},-\wm+|c|\bar{k})&\mbox{if $\km\le\bar{k}$,}\\
   \varphi(\km,-\wm+|c|\km) &
   \mbox{if $\bar{k}\le \km\le \frac{1}{|c|}\wm$,}\\
   \varphi(\km,0) & \mbox{if $\km\ge \frac{1}{|c|}\wm $.}\\
    \end{array}\right.
\end{equation}
We state in the following two subsections the main theorems which we
will prove in this paper, for both overlapping and non-overlapping
variants of the algorithm.

\subsection{Robin Transmission Conditions}

\begin{theorem}[Robin Conditions without Overlap]\label{th:toutno}
For small $h$ and small $\Delta t$, the best approximation problem
(\ref{BestApprox}) with $L=0$ has a unique solution
$(p_0^*(0),\delta_0^*(0))$, which is given asymptotically by
\begin{equation}\label{BestPRhoNoOverlap}
  p_0^*(0)\sim\sqrt{\frac{A}{Bh}},\quad \delta^*_0(0)\sim 1-\frac{1}{2}\sqrt{ABh},
\end{equation}
where $A$ is defined in (\ref{ConstA}), and
\begin{equation}\label{BestPRhoNoOverlapB}
  B = \left\{\begin{array}{ll}
    \frac{2}{\nu\pi} & \mbox{if $\Delta t=C_h h$},\\
    C\frac{\sqrt{2d}}{\nu\pi} & \mbox{if $\Delta t=C_h h^2$,
    $d:=\nu \pi C_h$,
      $C=\left\{\begin{array}{ll}
          1 & \mbox{if $d <d_0$},\\
    \sqrt{\frac{d+\sqrt{1+d^2}}{1+d^2}} & \mbox{if $d\ge d_0$,}
		\end{array}\right.$}
 \end{array}\right.
\end{equation}
where $d_0\approx 1.543679$ is the unique real root of the
polynomial $d^3-2d^2+2d-2$.
\end{theorem}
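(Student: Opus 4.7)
\medskip
\noindent\textbf{Proof plan.}
The plan is to reduce the minimization to real positive $p$, identify the two frequencies in $D$ at which $|\rho|$ is maximized in the asymptotically optimal parameter range, and extract $p_0^*(0)$ and $\delta_0^*(0)$ by an equioscillation argument.

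\textbf{Setup and reduction to $p\in\R_+^*$.}
With $q=0$, $L=0$, set $Z(\omega,k):=\sqrt{x_0^2+4\nu(\nu k^2+i(\omega+ck))}=X+iY$ with $X>0$, so that $\rho=(p-Z)/(p+Z)$. Since $D$ is symmetric under $(\omega,k)\mapsto(-\omega,-k)$ while $Y\mapsto -Y$, a direct computation shows that the optimal $p$ has vanishing imaginary part; alternatively one invokes the abstract homographic best-approximation framework of \cite{Bennequin:2009:HBA}, whose compactness assumption must first be removed in the non-overlapping case to cover $L=0$ where $|\rho|\to 1$ at infinity. For $p\in\R_+^*$,
$$|\rho|^2 = 1 - \frac{4pX}{(p+X)^2+Y^2},$$
so one has to minimize in $p$ the supremum over $D$ of this quantity.

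\textbf{The two competing extrema.} A short calculation from $X^2-Y^2=x_0^2+4\nu^2k^2$ and $2XY=4\nu(\omega+ck)$ gives $4X(\omega,k)=\varphi(k,\omega+ck)$. Because $p$ will turn out to be large, the \emph{low-frequency} extremum sits where $X$ is smallest, i.e.\ on $\{|\omega|=\wm\}\cup\{|k|=\km\}$. Differentiating $\varphi(k,\wm+|c|k)^2$ along the edge $|\omega|=\wm$ produces the interior critical point $\bar k$ defined in the statement, and comparing its value with the endpoint $|k|=\km$ and with the case where $\omega+ck=0$ is achievable splits the analysis into exactly the three regimes of (\ref{ConstA}), giving $A=4X_{\min}$. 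Symmetrically, since $|\rho|^2\approx 1-4p\,\Re Z/|Z|^2$ when $|Z|\gg p$, the \emph{high-frequency} extremum lies where $\Re Z/|Z|^2$ is largest on the outer corner $\{|\omega|=\wmm\}\cup\{|k|=\kmm\}$. In the regime $\Delta t=C_hh$ the $4\nu^2\kmm^2$ term dominates $16\nu^2\wmm^2$, and the extremum sits at $|k|=\kmm$, which leads directly to $B=2/(\nu\pi)$. In the regime $\Delta t=C_hh^2$ the $\wmm$ and $\kmm^2$ contributions are of the same order, and a Lagrange-multiplier analysis on the top edge places the extremum either at the corner $(\wmm,\kmm)$ (for $d<d_0$) or at an interior point of the edge (for $d\ge d_0$); the transition value $d_0$ emerges as the positive real root of the first-order polynomial $d^3-2d^2+2d-2$.

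\textbf{Equioscillation and asymptotic matching.} By the alternation principle extended to this homographic setting in \cite{Bennequin:2009:HBA}, the optimal $p$ is characterized by equality of $|\rho|$ at the two extrema above. With the ansatz $p\sim C/\sqrt h$ (forced by the balance below), Taylor expansion yields
$$1-|\rho_{\mathrm{low}}|\sim\frac{A}{2p},\qquad 1-|\rho_{\mathrm{high}}|\sim\frac{Bhp}{2},$$
and equating gives $p_0^*(0)^2\sim A/(Bh)$ and the common value $\tfrac12\sqrt{ABh}$, i.e.\ (\ref{BestPRhoNoOverlap}). Uniqueness follows from strict monotonicity in $p$ of each of the two one-sided suprema (one decreasing, one increasing), plus the fact that the two maximizers remain simple for the relevant range of $p$.

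\textbf{Main obstacle.} The analytical difficulty is not the equioscillation characterization, which is standard once the abstract framework of \cite{Bennequin:2009:HBA} is extended to cover $L=0$, but rather two technical points: (i) justifying, uniformly in $p$ in the asymptotic range, that the two claimed extrema really are the global maximizers on $D$ and not overtaken by another critical point on one of the four sides of $D$; and (ii) handling the parabolic-scaled regime where the cubic $d^3-2d^2+2d-2$ governs a jump of the high-frequency maximizer from the corner to the interior of the top edge. Removing the compactness hypothesis from \cite{Bennequin:2009:HBA} is the abstract prerequisite that makes the whole scheme work at $L=0$.
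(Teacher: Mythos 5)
Your overall strategy coincides with the paper's: restrict to real positive $p$, locate a low-frequency maximum with $x_{sw}=A/4$ and a high-frequency maximum on the outer boundary, equioscillate to get $p^2\sim A/(Bh)$, and invoke the abstract best-approximation results for global uniqueness. However, your description of the high-frequency competition contains two concrete errors. First, since $1-|\rho|^2\approx 4p\,\Re Z/|Z|^2$ when $|Z|\gg p$, the maximum of $|\rho|$ sits where $\Re Z/|Z|^2$ is \emph{smallest}, not largest; taken literally, ``largest'' points you to the wrong end of the outer boundary. Second, and more seriously, your dichotomy in the explicit regime is reversed: the paper shows $|\rho_0(z_3,p)|<|\rho_0(\tz_n,p)|$ for $d<d_0$ and $|\rho_0(z_3,p)|>|\rho_0(\tz_n,p)|$ for $d>d_0$, so the binding high-frequency point is the low-$k$ point $\tz_n\sim\sqrt{2\nu\wmm}(1+i)$ (giving $C=1$) when $d<d_0$, and the corner $z_3=z(\wmm,\soc\kmm)$ (giving $C=\sqrt{(d+\sqrt{1+d^2})/(1+d^2)}$) when $d\ge d_0$. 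Carried out as you state it, your analysis on the top edge would swap the two branches of $C$ in \eqref{BestPRhoNoOverlapB}. (Also, $d^3-2d^2+2d-2$ is a cubic, not a first-order polynomial.)

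Two further remarks. The compactness extension of \cite{Bennequin:2009:HBA} is not what is needed here: for $L=0$ the theorem takes $\kmm$ and $\wmm$ finite, so $\td$ is compact and Theorem \ref{th:gennonoverlap} applies the original framework directly; the extension (Theorem \ref{th:genoverlap}) is required only for the overlapping problem on the unbounded domain $D^\infty$. Finally, the point you defer as ``the main obstacle'' --- verifying, uniformly for $p\pr 1/\sqrt{h}$, that no other critical point on the four boundary arcs of $\td_+$ overtakes the two claimed maxima --- is precisely where the paper spends most of its effort (Lemmas \ref{lem:variationsw}, \ref{lem:variationsk} and \ref{lem:definitionoftk4}, which track the signs of $\partial_\w R_0$ and $\partial_k R_0$ on each arc and reduce the candidates to $\tz_{sw}$, $\tz_n$ and $z_3$); without that step the equioscillation between your two chosen points does not yet characterize the minimizer, and the subsequent verification that the equioscillation point is a strict local minimum is also needed before the abstract theorem can be invoked.
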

Partial results in the spirit of this theorem were already obtained
earlier:
\begin{enumerate}
  \item If $\km=\wm=0$, all three cases in (\ref{ConstA}) coincide,
    since $\bar{k}=0$, and the constant $A$ simplifies to $A=4x_0$,
    and we find the case analyzed in \cite{Halpern:2009:OSW}.
  \item If $\km$ and $\wm$ do not both vanish simultaneously, and we
    are in the case of the heat equation, $a=0$, $b=0$, $c=0$,
    $\nu=1$, we also obtain $\bar{k}=0$, and
    $A=4\sqrt{2\left(\sqrt{\km^4+\wm^2}+\km^2\right)}$, the case
    analyzed in \cite{Binh:2010:SWR}.  Note that the stability
    constraint for the heat equation discretized with a finite
    difference scheme is $4\nu\Delta t \le h^2$, which with our
    notation implies that $d\le \pi/4 \sim 0.7854$, a value smaller
    than $d_0$, and hence the constant $C$ in
    \eqref{BestPRhoNoOverlapB} is equal to 1.

\end{enumerate}

For the algorithm with overlap, $L>0$, we treat two asymptotic cases:
the continuous case deals with the small overlap parameter $L$ only,
while the discrete case involves also the grid parameters. In the
continuous case, we consider the parameters $\wmm$ and $\kmm$ to be
equal to $+\infty$.

\begin{theorem}[Robin Conditions with Overlap, Continuous]\label{th:touto}
For small overlap $L > 0$, the best approximation problem
\eqref{BestApprox} on $D^\infty:=\{(\omega,k), \wm\le |\omega|\le
+\infty,\,\km\le |k|\le +\infty\}$ has a unique solution
\begin{equation}
  p_{0,\infty}^*(L)\sim\frac12\sqrt[3]{\frac{\nu A^2}{L}},\quad
     \delta_{0,\infty}^*(L)\sim 1-\frac{A}{2p_{0,\infty}^*(L)}
,
\end{equation}
where $A$ is defined in (\ref{ConstA}).
\end{theorem}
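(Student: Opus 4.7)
My plan is to restrict the minimization to real $p>0$ by symmetry, characterize the supremum of $|\rho|$ over $D^\infty$ via a Chebyshev-type equioscillation at two points, and then perform an asymptotic expansion for small $L$.

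Writing $z:=\sqrt{\xoo+4\nu(\nu k^2+i(\omega+ck))}$, for $p>0$ real one has
\[
|\rho|^2 \;=\; \frac{(p-\Re z)^2+(\Im z)^2}{(p+\Re z)^2+(\Im z)^2}\,e^{-L\Re z/\nu},
\]
and a direct computation gives $\partial_{(\Im z)^2}|\rho|^2 = 4p\,\Re z\,e^{-L\Re z/\nu}/[(p+\Re z)^2+(\Im z)^2]^2 > 0$ whenever $p,\Re z>0$. Thus, for each value of $\Re z$, the supremum of $|\rho|$ is attained by maximizing $|\Im z|$; this restricts the supremum to an ``outer'' boundary curve $\mathcal{C}$ in the $z$-plane, passing through the point $(\omega_\ast,k_\ast)$ at which $\Re z$ attains its minimum $A/4$ over $D^\infty$ (this minimum being the defining property of $A$ in (\ref{ConstA})). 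Depending on which case in (\ref{ConstA}) is active, $\mathcal{C}$ is the image of $|k|=\km$, of $|\omega|=\wm$, or pieces of both joined at the corner $(\omega_\ast,k_\ast)$. Using $R=\Re z\in[A/4,\infty)$ as a natural parameter along $\mathcal{C}$, the supremum becomes a one-variable problem $|\rho|=F(R;p,L)$.

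A log-derivative computation along $\mathcal{C}$ then shows that $R\mapsto|\rho|(R)$ has a local maximum at the endpoint $R=A/4$, decreases, and admits a unique interior critical point $R^\dagger$ where the growth toward $1$ of the Möbius factor balances the exponential decay $e^{-LR/(2\nu)}$. Using $|\Im z|\sim R$ for large $R$ along $\mathcal{C}$, one finds $R^\dagger\sim\sqrt{2\nu p/L}$, and in the regime $p\to\infty$, $L\to 0$ with $pL^{1/3}=O(1)$ the two local maxima expand to
\[
|\rho(A/4)|\sim 1-\frac{A}{2p},\qquad |\rho(R^\dagger)|\sim 1-\sqrt{\frac{2pL}{\nu}}.
\]
Applying the Chebyshev equioscillation criterion for one-parameter best approximation (in the spirit of \cite{Bennequin:2009:HBA}), the optimum $p^*$ is determined by equality of these two leading-order maxima, $A/(2p^*)=\sqrt{2p^*L/\nu}$, which gives $(p^*)^3=\nu A^2/(8L)$ and hence $p^*_{0,\infty}(L)\sim\frac12\sqrt[3]{\nu A^2/L}$. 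Substituting back then yields $\delta^*_{0,\infty}(L)\sim 1-A/(2p^*_{0,\infty}(L))$. Uniqueness of $p^*$ for small $L$ follows because the low-frequency maximum $A/(2p)$ is strictly decreasing in $p$ while the interior maximum $\sqrt{2pL/\nu}$ is strictly increasing, so the equioscillation equation has a unique solution.

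The main obstacle is twofold. First, the description of $\mathcal{C}$ and the verification that $|\rho|$ restricted to $\mathcal{C}$ has exactly the two claimed local maxima must be carried uniformly through all three cases of (\ref{ConstA}), each of which places the minimizer $(\omega_\ast,k_\ast)$ on a different boundary piece of $D^\infty$. Second, the abstract best-approximation framework of \cite{Bennequin:2009:HBA} was originally stated under a compactness hypothesis that fails for the unbounded set $D^\infty$; as the introduction signals, this condition must be removed here, which requires an auxiliary argument using the exponential damping to rule out maximizing sequences escaping to infinity. Along the way one must track subdominant corrections (the $O(L)$ exponential correction at $R=A/4$, and the higher-order Möbius and $p/R^\dagger$ corrections at $R^\dagger$) and show that each is $o(L^{1/3})$, so that the leading-order balance that produces $p^*$ is not shifted.
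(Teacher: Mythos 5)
Your strategy is essentially the paper's: identify two competing local maxima of $|\rho|$ --- one at the point of minimal $\Re z$, with value $1-A/(2p)$, and one at $\Re z\sim\sqrt{2\nu p/L}$ created by the exponential factor, with value $1-\sqrt{2pL/\nu}$ --- and equioscillate. All your asymptotic constants agree with the paper's (Lemma \ref{lem:asymptoverlapinfini} and \eqref{eq:poptinf}, with $x_{sw}=A/4$ and $\ell=L/2\nu$). Your reduction to an outer envelope parametrized by $R=\Re z$, via monotonicity of $|\rho_0|^2$ in $(\Im z)^2$ at fixed $\Re z$, is a slightly different and rather clean way of seeing why only the west/south-west boundary matters; the paper instead works on the curves $\ccwi$ and $\ccsw$ in the $(\omega,k)$ parametrization and reduces the $\omega$-derivative on $\ccwi$ to the sign of a quadratic in $x^2$. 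One small slip: along $\mathcal{C}$ there are \emph{two} interior critical points, a minimum near $R\sim p/\sqrt{2}$ and only then the maximum $R^\dagger\sim\sqrt{2\nu p/L}$ (the function must turn around before rising back towards $1$); only the maximum matters, but ``unique interior critical point'' is not correct as stated.

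The genuine gap is in the passage from the equioscillation equation to uniqueness and global optimality. The observation that $A/(2p)$ decreases while $\sqrt{2pL/\nu}$ increases in $p$ only shows that the equioscillation equation has a unique root \emph{within your asymptotic ansatz}; it does not show that this root is the global minimizer of $\sup_z|\rho|$ over all of $\C$, nor even over all real $p$. The paper closes this with Theorem \ref{th:genoverlap} (existence, uniqueness, reality and positivity of $p^*$, and the statement that any strict local minimum of $F_L$ on $\R$ is the global minimum), after which one only needs to verify that the equioscillation point is a strict local minimum by checking the opposite signs of $\partial_p R$ at the two active points. Moreover, your proposed fix for the failure of compactness of $D^\infty$ --- ruling out maximizing sequences escaping to infinity via the exponential damping --- is not the issue that actually needs fixing: the obstruction in \cite{Bennequin:2009:HBA} is the smallness condition $\delta^*(L)e^{(L/2\nu)\sup\Re z}<1$ used for \emph{uniqueness}, and the paper removes it by a convexity argument exploiting $\Re z>|\Im z|$ on $\td$ to show $|\Im p^*|\le\Re p^*$, hence $\Re(p^*/z)>0$, hence convexity of the set of minimizers. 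Without an argument of this kind the word ``unique'' in the theorem is not established.
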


If the overlap is fixed, the above analysis gives the behavior of the
best parameter when $h$ and $\Delta t$ tend to zero. However, the
overlap contains in general a few grid points only, and then the
discretization also needs to be taken into account:
\begin{theorem}[Robin Conditions with Overlap, Discrete]\label{th:toutobounded}
For small $\Delta t$ and $h$, for $L\pr h$, the best approximation
problem \eqref{BestApprox} on $D$ has a unique solution
\begin{equation}\label{RobinOverlapDiscrete}
\begin{array}{ll}
  \begin{array}{rl}
  \mbox{for $\Delta t \pr h^2$}:&
    p_0^*(L)  \sim  p^*_{0,\infty} (L)
    ,\\
    \mbox{for $\Delta t \pr h$}:&
    p_0^*(L)\sim\ds \frac{p^*_{0,\infty} (L)}{\sqrt[3]{2}},
    \end{array}
    &
    \delta_0^*(L)  \sim   1-\ds\frac{A}{2p_{0}^*(L)}.
  \end{array}
  \end{equation}
\end{theorem}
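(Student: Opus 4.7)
The plan is to build on the proof of Theorem~\ref{th:touto} and track how the finite cutoffs $\kmm=\pi/h$ and $\wmm=\pi/\Delta t$, with $L\pr h$, interact with the interior high-frequency maximum of $|\rho|$. In the continuous problem on $D^\infty$ the optimal $p^*_{0,\infty}(L)$ is characterized by equioscillation between two extrema: a low-frequency maximum where, for $p\gg |Z|$, a Taylor expansion of $|p-Z|/|p+Z|$ gives $|\rho|\sim 1-A/(2p)$; and a high-frequency maximum on the locus $\arg Z\to\pi/4$, essentially $k=0$ and $\omega\to\infty$, located at $\omega^*\sim p/L\sim L^{-4/3}$ and giving $|\rho|\sim 1-\sqrt{2pL/\nu}$. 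Matching these yields $p^*_{0,\infty}(L)=\tfrac12(\nu A^2/L)^{1/3}$.

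\textbf{Case $\Delta t\pr h^2$.}
Here $\wmm\pr L^{-2}$ and $\kmm\pr L^{-1}$ are both larger than the scales $\omega^*\sim L^{-4/3}$ and the corresponding $k$ of the continuous high-frequency maximum, so that maximum lies in the interior of $D$ for $L$ small. The sup of $|\rho|$ over $D$ therefore agrees to leading order with the sup over $D^\infty$, and the best $p$ on $D$ coincides asymptotically with $p^*_{0,\infty}(L)$. The asymptotic $\delta_0^*(L)\sim 1-A/(2p_0^*(L))$ is inherited from the low-frequency expansion.

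\textbf{Case $\Delta t\pr h$ and emergence of the $\sqrt[3]{2}$.}
Now $\wmm\sim\beta/L$ for some constant $\beta$, which is below $\omega^*\sim L^{-4/3}$, so the $\arg Z\approx\pi/4$ maximum is cut out. I would then study the restriction of $|\rho|$ to the boundary $\omega=\pm\wmm$. Parametrizing by $a^2:=x_0^2+4\nu^2k^2$ and setting $M=|Z|^2=\sqrt{a^4+16\nu^2\wmm^2}$, the function $k\mapsto|\rho(\wmm,k,p,0,L)|$ has a local maximum at $k=0$ of value $\sim 1-p\sqrt{L/(2\nu\beta)}$, then a local minimum near $a^2\sim 4\nu\wmm/\sqrt 3$, and a larger local maximum near some $k^{**}$ with $(k^{**})^2\sim p/(\nu L)$, where $\arg Z$ has asymptotically flattened to $0$. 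At this second maximum an expansion of
\[
 |\rho|^2\;=\;\frac{|p-Z|^2}{|p+Z|^2}\,e^{-L\Re(Z)/\nu}
\]
gives $|\rho|_{\max,D}\sim 1-2\sqrt{pL/\nu}$, the ``real-$Z$'' value, with prefactor $2$ in place of the $\sqrt 2$ coming from the excluded $\pi/4$ direction. Equating this with the low-frequency value $1-A/(2p)$ produces $A/(2p)=2\sqrt{pL/\nu}$, i.e.\ $p^3=\nu A^2/(16L)$, so that $p_0^*(L)\sim(\nu A^2/(16L))^{1/3}=p^*_{0,\infty}(L)/\sqrt[3]{2}$, as asserted. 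The asymptotic $\delta_0^*(L)\sim 1-A/(2p_0^*(L))$ then follows in both cases by inserting the optimal $p$ into the low-frequency expansion.

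\textbf{Uniqueness and main obstacle.}
Uniqueness of the optimizer follows from the abstract best-approximation/equioscillation framework of \cite{Bennequin:2009:HBA}, applied to the homographic dependence of $|\rho|$ on $p$, together with the strict monotonicity of the two branches of the sup as $p$ varies. The hardest step is the analysis on the curve $\omega=\wmm$ in the case $\Delta t\pr h$: one must enumerate the local extrema, dismiss the spurious local maximum at $k=0$ (whose value is asymptotically smaller, of order $L^{1/6}$ versus $L^{1/3}$), locate the dominant maximum $k^{**}$ and verify it lies in $[\km,\kmm]$ for small $L$, and carry out the two-variable expansion of $|\rho|^2$ carefully enough to extract the exact prefactor $2$. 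A secondary subtlety is that the geometry of $A$ in (\ref{ConstA}) changes between the three regimes in $\km,\bar k,\wm/|c|$, which shifts the location of the low-frequency maximizer but does not affect the leading-order equioscillation or the $\sqrt[3]{2}$ factor.
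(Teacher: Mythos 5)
Your proposal is correct and follows essentially the same route as the paper: for $\Delta t\pr h^2$ the interior high-frequency maximum of the continuous analysis survives inside $D$, while for $\Delta t\pr h$ it is cut off and replaced by a new maximum on the curve $\omega=\wmm$ at $k^{**}$ with $(k^{**})^2\sim p/(\nu L)$ (the paper's $\tk_4''\sim\frac{1}{2\nu}\sqrt{2p/\ell}$), whose value $1-2\sqrt{pL/\nu}$ carries the extra factor responsible for $\sqrt[3]{2}$. Your asymptotic values, equioscillation equations, and the resulting $p_0^*(L)$ all match the paper's computations.
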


\subsection{Ventcel Transmission Conditions}

In order to present the theorems, we need to define two auxiliary
functions: first
$$
  g(t)=\frac{2t-\sqrt{t^2+1}}{t^2+1},
$$
and we denote for $Q < g_0\approx 0.3690$ by $t_2(Q)$ the only root of
the equation $g(t)=Q$ larger than $t_0=\sqrt{54+6\sqrt{33}}/6\approx
1.567618292$. Next we also define
\begin{equation}\label{eq:PdeQ}
P(Q)=
\left\{
\begin{array}{ll}
\sqrt{1+\sqrt{t_2(Q)^2+1}}( \frac{1}{\sqrt{t_2(Q)^2+1}} +Q)
&\mbox{if }Q < g_1\approx 0.3148, \\
 1 +Q
&\mbox{if }Q> g_1.
\end{array}
\right.
\end{equation}
\begin{theorem}[Ventcel Conditions without Overlap]\label{th:pqnoverlap}
The best approximation problem has for $L~=~0$ a unique solution
$(p_1^*(0),q_1^*(0))$, given by
\begin{equation}\label{eq:pqnoverlap}
\begin{array}{llllll}
  \mbox{for $\Delta t =C_hh$ and $\frac{AC_h}{8} < 1$}:&
    p_1^*(0)  \sim  \frac{1}{2}\sqrt[4]{\frac{\nu\pi A^3}{4h}},&\ 
    q_1^*(0) \sim  \frac{8ph}{\pi A}, \\
  \mbox{for $\Delta t =C_hh$ and $\frac{AC_h}{8} > 1$}:&
    p_1^*(0)  \sim  \sqrt[4]{\frac{\nu\pi
        A^2}{2C_h(P(\frac{8}{C_hA}))^2h}}, &
    q_1^*(0)  \sim  \frac{8ph}{\pi A}, \\
  \mbox{for $\Delta t =C_hh^2$}:&
    p_1^*(0)  \sim  \frac{1}{2}\sqrt[4]{\frac{\nu\pi A^3}{4Ch}\sqrt{\frac{2}{d}}},\quad
   &  q_1^*(0)  \sim   \frac{8Cph}{\pi A}\sqrt{\frac{d}{2}},\\[1em]
    &\ds \delta_1^*(0) \sim  1-\ds\frac{A}{2p_1^*(0)}. & &
\end{array}
\end{equation}
Here again $A$ is the constant defined in \eqref{ConstA}, $d$ and $C$ are
the constants defined in \eqref{BestPRhoNoOverlapB}.
\end{theorem}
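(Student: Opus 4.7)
To prove Theorem~\ref{th:pqnoverlap}, I would first reduce the best approximation problem to a two-parameter real minimax. Setting $z=\nu k^2+i(\omega+ck)$ and $w=\sqrt{x_0^2+4\nu z}$, formula (\ref{GeneralConvFactor}) with $L=0$ becomes the homographic expression $\rho=(p+qz-w)/(p+qz+w)$. The set $D$ is invariant under $\omega\mapsto-\omega$ and $\rho$ has the corresponding conjugation symmetry $\rho(\omega,k,\bar p,\bar q,0)=\overline{\rho(-\omega,k,p,q,0)}$, so the infimum over $(p,q)\in\C^2$ is attained with $p,q\in\R$, and positivity arguments reduce the problem to $\inf_{p,q>0}\sup_D|\rho|$. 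The image of $\partial D$ under $(\omega,k)\mapsto z$ is an explicit union of arcs in the right half-plane on which the analysis will be performed.

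Next I would apply the abstract best approximation theorem for homographic functions from \cite{Bennequin:2009:HBA} (which, as noted in the introduction of this paper, is extended here to remove the earlier compactness hypothesis). This produces existence and uniqueness of $(p_1^*(0),q_1^*(0))$ together with an equioscillation characterisation: since there are two real parameters, $|\rho|$ must attain its supremum on $D$ at (at least) three points, and at these points the gradient of $|\rho|^2$ with respect to $(p,q)$ must vanish in the convex-hull sense. Computing these gradients and parameterising $\partial D$ by $\omega$ and $k$ separately identifies the candidate equioscillation sets: the low-frequency extremiser selected by the three cases in (\ref{ConstA}) and producing the constant $A$; a high-frequency endpoint (at $(\wmm,\kmm)$ or $(\wm,\kmm)$, depending on $\Delta t$); and an interior critical point along one of the high-frequency edges.

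The next step is the asymptotic expansion as $h,\Delta t\to 0$. At the low-frequency point $|\rho|\sim 1-A/(2p)$ to leading order, while at a large-$z$ point matching real parts forces $q\,\nu k^2\sim\Re(w)$, giving $q_1^*(0)\sim 8ph/(\pi A)$ (with the correction factor $C\sqrt{d/2}$ when $\Delta t=C_hh^2$). Imposing that the three selected values of $|\rho|$ coincide to leading order yields a scalar equation for $p_1^*(0)$ whose solution is the stated quartic $p_1^*(0)\sim\sqrt[4]{\cdots}$; the asymptotic value of $\delta_1^*(0)\sim 1-A/(2p_1^*(0))$ then follows directly.

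The main obstacle is the case distinction in (\ref{eq:pqnoverlap}). Which high-frequency endpoint is active, and whether the interior critical point lies on the $\omega$-edge or the $k$-edge of $\partial D$, depends on the relative scales of $\wmm=\pi/\Delta t$ and $\nu\kmm^2=\nu\pi^2/h^2$, equivalently on the sign of $AC_h/8-1$ in the implicit case $\Delta t=C_hh$. When the interior critical point is active, its location is governed by the scalar equation $g(t)=Q$ with $Q=8/(C_hA)$, and the branch $t_2(Q)$ persists only for $Q<g_0$; the transition at $Q=g_1$ between the two formulas in (\ref{eq:PdeQ}) corresponds to the equioscillation set losing or gaining that interior point, causing $P(Q)$ to switch between $\sqrt{1+\sqrt{t_2^2+1}}(1/\sqrt{t_2^2+1}+Q)$ and $1+Q$. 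Verifying that the optimality system indeed changes regime exactly at these thresholds, and that the resulting asymptotic expansions glue together consistently across regimes, is the technical core of the proof and is the analogue for two space dimensions of the one-dimensional analysis in \cite{Gander:2007:OSW} and \cite{Bennequin:2009:HBA}.
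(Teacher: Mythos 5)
Your overall strategy coincides with the paper's: invoke the abstract results for existence, uniqueness and reality of $(p_1^*(0),q_1^*(0))$, locate the competing local maxima of $|\rho_0|$ on the boundary of the transformed frequency domain under a power-law ansatz $p\pr\kmm^\alpha$, $q\pr\kmm^{-\beta}$, determine the parameters by three-point equioscillation, and separate the regimes by comparing $p/q$ with $\wmm$ (which is exactly the threshold $AC_h/8<1$ versus $AC_h/8>1$) and $Q=8/(C_hA)$ with $g_1$. There is, however, one step that would fail as written: your mechanism for obtaining $q$. At the high-frequency corner $z_3$ the correct dominant balance is $q\nu\kmm^2\gg \Re w\sim 2\nu\kmm$, so that $|\rho_0(z_3,p,q)|\sim 1-4/(q\kmm)$, and $q$ is then fixed by equating this with $|\rho_0(\bz_{sw},p,q)|\sim 1-A/(2p)$, which gives $q\kmm=8p/A$, i.e. $q\sim 8ph/(\pi A)$. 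The balance you propose, $q\,\nu k^2\sim\Re(w)$, would instead give $q\sim 2h/\pi$, smaller than the correct value by the divergent factor $4p/A$, and would make $|\rho_0(z_3)|$ bounded away from $1$; it is inconsistent with the formula you then quote. (Because you also impose that the three selected maxima coincide, the correct $q$ would re-emerge from that $2\times 2$ system, but the stated heuristic is not a valid shortcut and the extra condition is redundant.)

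Two further points. First, equioscillation at three points is only a necessary condition; the paper must still verify that the candidate $(\bar p_1^*,\bar q_1^*)$ is a \emph{strict} local minimum of $F_0(p,q)$ before the abstract theorem promotes it to the global minimum. This is done by computing $(\delta p\,\partial_{\tilde p}+\delta q\,\partial_{\tilde q})R_0$ at each active point and checking that the three resulting open half-planes ${\cal E}_1,{\cal E}_2,{\cal E}_3$ cover $\R^2$; since their slopes degenerate as $h\to 0$ and the covering depends on the sign of $x_{sw}^2-3y_{sw}^2$, this is a genuine verification, not an automatic consequence of your ``convex-hull'' remark. Second, a smaller slip: the high-frequency member of the equioscillation set is always $z_3=z(\wmm,\soc\kmm)$ or an interior point of the northern curve $\ccn$ (namely $\bz_4$ or $\bz_5$, whose competition produces the two branches of $P(Q)$), never the point $(\wm,\kmm)$; and in the explicit regime with $d<d_0$ all three active points are the southwest point and two interior critical points, with no corner involved at all.
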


\begin{theorem}[Ventcel Conditions with Overlap, Continuous]\label{th:toutopq}
For small overlap $L > 0$, the best approximation problem
\eqref{BestApprox} on $D^\infty$ has the unique solution
\begin{equation}
  p_{1,\infty}^*(L)\sim \frac12\sqrt[5]{\frac{ \nu A^4}{8L}},\quad
  q_{1,\infty}^*(L)\sim 4 \sqrt[5]{\frac{\nu^2L^3}{2A^2}},\quad
     \delta_{1,\infty}^*(L)\sim1-\ds\frac{A}{2p_{1,\infty}^*(L)},
\end{equation}
where $A$ is defined in (\ref{ConstA}).
\end{theorem}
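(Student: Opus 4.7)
The plan is to reduce the two-variable best approximation problem (\ref{BestApprox}) to a one-dimensional problem in a single complex spectral variable, apply an equioscillation characterization, and then solve it asymptotically as $L\to 0^+$, in the spirit of the proofs of Theorems~\ref{th:touto} and~\ref{th:pqnoverlap}. Concretely I would introduce $\zeta=\sqrt{x_0^2+4\nu(\nu k^2+i(\omega+ck))}$, so that (\ref{GeneralConvFactor}) becomes
$$\rho(\zeta;p,q,L)=\frac{P(\zeta)-\zeta}{P(\zeta)+\zeta}\,e^{-L\zeta/(2\nu)},\qquad P(\zeta):=p+\frac{q}{4\nu}(\zeta^2-x_0^2),$$
depending on $(\omega,k)$ only through $\zeta$. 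As $(\omega,k)$ ranges over $D^\infty$, the variable $\zeta$ sweeps an unbounded closed subset $Z^\infty$ of $\{\Re\zeta>0\}$; a direct computation gives $4\Re\zeta=\varphi(k,\omega+ck)$, and hence $\min_{\zeta\in Z^\infty}\Re\zeta=A/4$ (with $A$ as in (\ref{ConstA})), attained at a distinguished boundary point $\zeta_0$. After setting $\tilde q:=q/(4\nu)$, problem (\ref{BestApprox}) becomes the best approximation of $\zeta\mapsto\zeta$ on $Z^\infty$ by the quadratic family $P(\zeta)=p+\tilde q(\zeta^2-x_0^2)$ in the norm induced by $|\rho|$.

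I would then invoke the abstract equioscillation framework of \cite{Bennequin:2009:HBA}, together with its extension to non-compact sets announced in the introduction of this paper. This yields existence, uniqueness and an equioscillation characterization of the optimum: $\sup_{Z^\infty}|\rho|$ is attained at three distinct points $\zeta_0,\zeta_1,\zeta_2\in Z^\infty$ with the appropriate phase alignment of $\partial_p\rho$ and $\partial_{\tilde q}\rho$ at those points. The low-frequency endpoint $\zeta_0$ is the one that carries the constant $A$ into the final answer through the expansion $1-|\rho(\zeta_0)|\sim A/(2p)$ as $p\to\infty$, while $\zeta_1,\zeta_2$ are interior critical points of $|\rho|$ along $\partial Z^\infty$ which escape to infinity as $L\to 0^+$.

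The asymptotic step uses the scaling ansatz $p\sim\alpha L^{-1/5}$, $q\sim 4\nu\beta L^{3/5}$, dictated by requiring all three equioscillation conditions to balance at the same order in $L$ (combining the $L^{-1/3}$ scaling of Theorem~\ref{th:touto} with the Ventcel scaling of Theorem~\ref{th:pqnoverlap}). After introducing the matching rescalings of $\zeta$ near $\zeta_0$ and near the escaping critical points, the three equioscillation equations collapse asymptotically into a closed system of three equations in the three unknowns $\alpha$, $\beta$ and $\delta^*_{1,\infty}$, whose solution yields the stated leading-order formulas
$$p_{1,\infty}^*(L)\sim \frac{1}{2}\sqrt[5]{\frac{\nu A^4}{8L}},\qquad q_{1,\infty}^*(L)\sim 4\sqrt[5]{\frac{\nu^2 L^3}{2A^2}},\qquad \delta^*_{1,\infty}(L)\sim 1-\frac{A}{2 p_{1,\infty}^*(L)}.$$

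The main obstacle I anticipate is the non-compactness of $Z^\infty$, which prevents direct application of the equioscillation theorem of \cite{Bennequin:2009:HBA}: the extension announced in the introduction is essential in order to secure existence, uniqueness, and the three-point characterization on an unbounded domain. Once this ingredient is in place, the asymptotic matching is a calculation parallel to those already performed in the proofs of Theorems~\ref{th:touto} and~\ref{th:pqnoverlap}, and can in large part be adapted from them.
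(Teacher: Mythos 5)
Your proposal is correct and follows essentially the same route as the paper: the same reduction to $\rho=\frac{\tp+\tq z^2-z}{\tp+\tq z^2+z}\,e^{-Lz/2\nu}$ with $\tq=q/4\nu$, the same three competing maxima (the low-frequency point with real part $A/4$ and two critical points on the unbounded west boundary that escape to infinity as $L\to0^+$), the same $p\pr L^{-1/5}$, $q\pr L^{3/5}$ equioscillation ansatz leading to the balance relations $2\tp^3\tq\sim x_{sw}^2$ and $\ell\,\tp^2/\tq\sim x_{sw}^2$, and the same closing appeal to the abstract non-compact result. The one nuance is that Theorem \ref{th:genoverlappq} does not by itself deliver a three-point equioscillation characterization on the unbounded domain---it only promotes a strict local minimum of $F_L$ to the unique global one---so the two escaping maxima (located at $\w\sim\tp/4\nu\tq$ and $\w\sim 2/\ell\tq$, as roots of a quartic singular perturbation of the non-overlapping cubic on $\ccwi$) and the strict local minimality still have to be exhibited by hand, exactly as you anticipate in your final paragraph.
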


\begin{theorem}[Ventcel Conditions with Overlap, Discrete]
\label{th:toutoboundedpq}
For small $\Delta t$ and $h$, for $L\pr h$, the best approximation
problem \eqref{BestApprox} on $D$ has a unique solution
\begin{equation}
\begin{array}{ll}
  \begin{array}{rll}
  \mbox{for $\Delta t \pr h^2$}:&
    p_1^*(L)  \sim  p^*_{1,\infty }(L),&
    q_1^*(L) \sim  q^*_{1,\infty }(L), 
       \\
  \mbox{for $\Delta t\pr h$}:&
    p_1^*(L)   \sim   2^{-\frac15}p^*_{1,\infty }(L),&
  q_1^*(L)   \sim   2^{\frac35} q^*_{1,\infty }(L), 
\end{array}
  &\delta_1^*(L) \sim 1-\ds\frac{A}{2p_{1}^*(L)}.     
\end{array}
\end{equation}
\end{theorem}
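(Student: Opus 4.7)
The plan is to characterize the minimax optimum $(p_1^*(L), q_1^*(L))$ via an equioscillation principle for $|\rho(\omega,k,p,q,L)|$ on $D$, and then locate its extremal points relative to the spectral cutoffs $\wmm, \kmm$. The analysis combines the equioscillation structure already established for the continuous Ventcel problem in Theorem \ref{th:toutopq} with the discretization technique used for Robin conditions in Theorem \ref{th:toutobounded}.

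First I would adapt the scaling analysis from the proof of Theorem \ref{th:toutopq}: since $p^*_{1,\infty}(L)\pr L^{-1/5}$ and $q^*_{1,\infty}(L)\pr L^{3/5}$, a change of variables in $\rho$ places the extremal frequencies on characteristic scales in $L$. Under $L\pr h$, these are to be compared with $\kmm\pr h^{-1}$, and with $\wmm$, which is $\pr h^{-2}$ when $\Delta t\pr h^2$ but only $\pr h^{-1}$ when $\Delta t\pr h$. This comparison determines whether each extremum of $|\rho|$ falls strictly inside $D$ or migrates to its boundary.

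In the regime $\Delta t\pr h^2$, I would show that all continuous extrema stay strictly interior to $D$, while the exponential factor $\exp(-L\sqrt{\cdot}/(2\nu))$ further suppresses any contribution from the corner $(\wmm, \kmm)$. The same equioscillation as in Theorem \ref{th:toutopq} then characterizes the optimum, yielding $p_1^*(L)\pr p^*_{1,\infty}(L)$ and $q_1^*(L)\pr q^*_{1,\infty}(L)$. In the regime $\Delta t\pr h$, one extremum is pulled toward the corner $(\wmm, \kmm)$, whose contribution to $|\rho|$ becomes comparable to the interior maxima. A careful count of the equioscillation conditions shows that the resulting optimality system is equivalent to the continuous one with $L$ replaced by $2L$, and substituting this effective overlap into the power laws of Theorem \ref{th:toutopq} gives $p_1^*(L)\pr 2^{-1/5} p^*_{1,\infty}(L)$ and $q_1^*(L)\pr 2^{3/5} q^*_{1,\infty}(L)$. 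The estimate $\delta_1^*(L)\pr 1-A/(2p_1^*(L))$ then follows in both regimes from the dominant balance of the homographic factor at the relevant extrema, exactly as in the continuous case.

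The main obstacle will be the rigorous justification of the shift in the equioscillation structure between the two regimes: one must track all critical points of $|\rho|$ as $p, q, h$ vary simultaneously, show that exactly one interior extremum is replaced by the corner when $\Delta t\pr h$, and prove that the resulting nonlinear optimality system admits a unique asymptotic solution with the claimed scaling. The argument is delicate because the system is not monotone in $(p,q)$, so uniqueness must be obtained via a perturbative implicit function theorem around the continuous solution of Theorem \ref{th:toutopq}.
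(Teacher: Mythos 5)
Your overall strategy (equioscillation, then checking which extremal frequencies survive the truncation to $D$ by comparing them with $\kmm$ and $\wmm$ in the two regimes) is the paper's strategy, and your treatment of the regime $\Delta t\pr h^2$ is correct in spirit: there all three equioscillation points of the continuous problem remain inside $D$ and the answer is unchanged. But in the regime $\Delta t\pr h$ your identification of the new binding constraint is wrong, and this is precisely the step that produces the factors $2^{-1/5}$ and $2^{3/5}$. You claim that one extremum is ``pulled toward the corner $(\wmm,\kmm)$'' and that the corner's contribution becomes comparable to the interior maxima. It does not: at the corner $z_3=z(\wmm,\soc\kmm)$ one has $x_3\sim 2\nu\kmm$, so with $\ell=L/2\nu\pr\kmm^{-1}$ the overlap factor $e^{-2\ell x_3}$ is a constant strictly less than $1$, and $|\rho(z_3,p,q,L)|$ is bounded away from $1$, hence asymptotically irrelevant against maxima of size $1-o(1)$. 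What actually happens (Lemma \ref{lem:localmaxpqoverlapbounded}) is that the second western maximum $\bz_1''$, located at $\w_1''\sim\frac{1}{4\nu\ell\tq}\pr\kmm^{1+\beta}$, leaves $D$ because $\wmm\pr\kmm$, and the overlap creates a \emph{new interior maximum on the northern edge} $\ccn$ at $X=\xoo+4\nu^2k^2\sim\frac{2}{\ell\tq}$, i.e.\ at $\bk_4'\sim\frac{\soc}{2\nu}\sqrt{2/(\ell\tq)}$ with $\km\ll|\bk_4'|\ll\kmm$, where $|\rho|\sim 1-2\sqrt{2\ell/\tq}$ instead of the continuous value $1-2\sqrt{\ell/\tq}$ at $\bz_1''$.

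Because only this one equioscillation equation changes, from $\ell\tp^2/\tq\sim x_{sw}^2$ to $2\ell\tp^2/\tq\sim x_{sw}^2$, while $2\tp^3\tq\sim x_{sw}^2$ is untouched, the system is indeed the continuous one with $\ell$ replaced by $2\ell$, so your final scalings are correct --- but in your write-up this factor of $2$ is asserted (``a careful count of the equioscillation conditions shows\ldots'') rather than derived, and it cannot be derived from the corner mechanism you propose. The missing computation is exactly the asymptotic root analysis of $\Phi_k^\ell\sim 2\nu k X^{3/2}\bigl(-\tfrac{\ell}{2}\tq^4X^3+\tq^3X^2-\tq X+\tp\bigr)$ on $\ccn$. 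A secondary point: uniqueness is obtained in the paper from the abstract Theorem \ref{th:genoverlappq} (a strict local minimum of $F_L$ is the global minimum), not from an implicit function theorem around the continuous optimizer; the latter is problematic here since for $\Delta t\pr h$ the active constraint set itself changes and the discrete optimizer is not a small perturbation of the continuous one.
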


\section{Abstract Results}

We now recall the abstract results on the best approximation problem
(\ref{BestApprox}) from \cite{Bennequin:2009:HBA}, and present an
important extension, which allows us to remove a compactness
assumption in the overlapping case. We start by rewriting the
convergence factor (\ref{GeneralConvFactor}) in the form
\begin{equation}\label{eq:rho}
  \rho(z,s,L)=\ds  \frac{s-z}{s+z} \
   e^{- \frac{Lz}{2\nu}},\quad
  z :=\sqrt{x_0^2+4\nu(\nu k^2+i (\omega+c k))},\quad s =p+q(\nu k^2+i (\omega+c k)).
\end{equation}
In order to separate real and imaginary parts of the square root, we
introduce the change of variables ${\mathcal T}: (k,\w)\mapsto
z=x+iy$, which transforms the domain $D$ into $\td=\td_+\cup
\overline{\td_+}$, with $\td_+\subset \R_+\times\R_+$, as illustrated
in Figure \ref{Fig:ChangeOfVars}.
\begin{figure}
  \centering
  \includegraphics[width=0.45\textwidth]{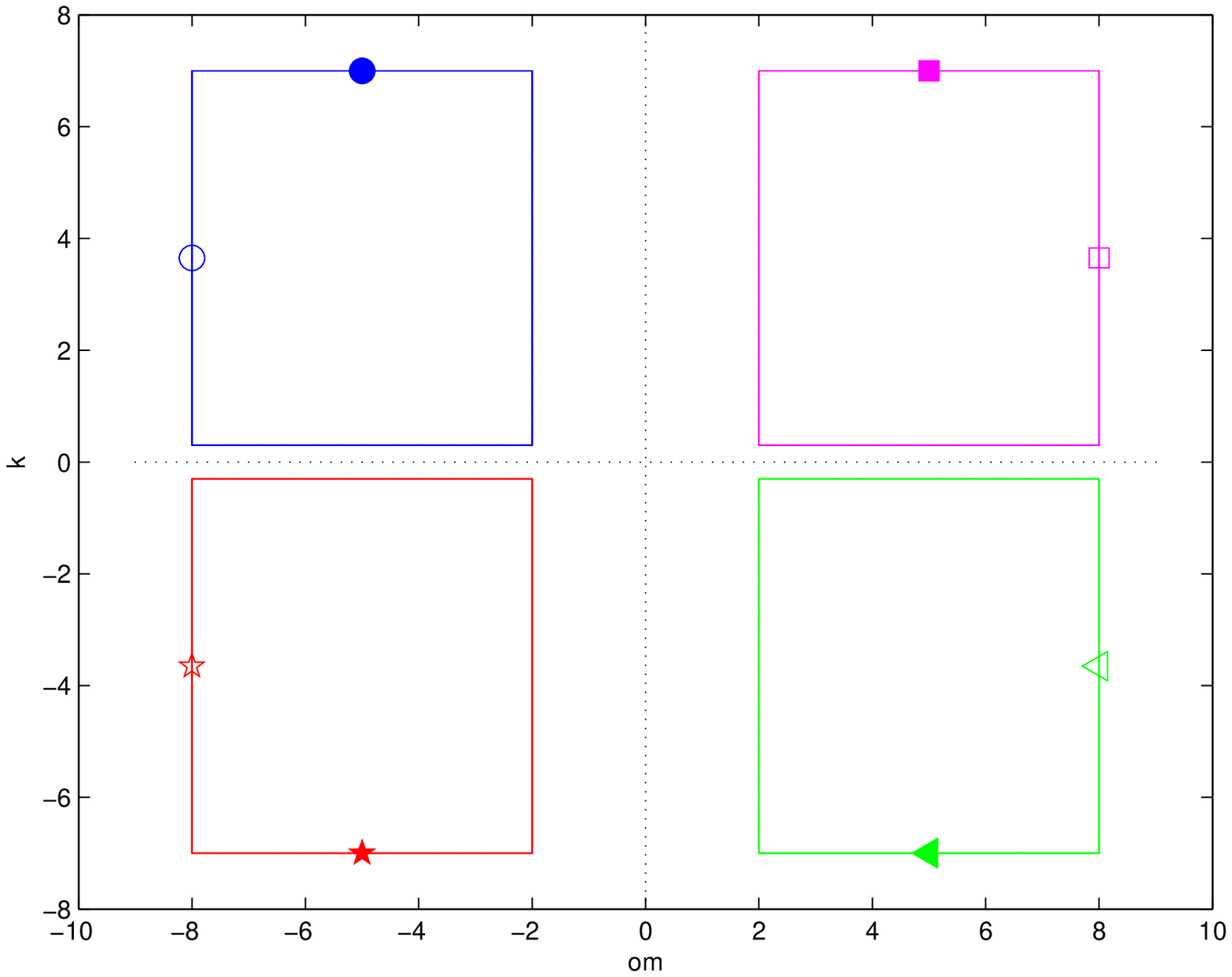}
  \includegraphics[width=0.45\textwidth]{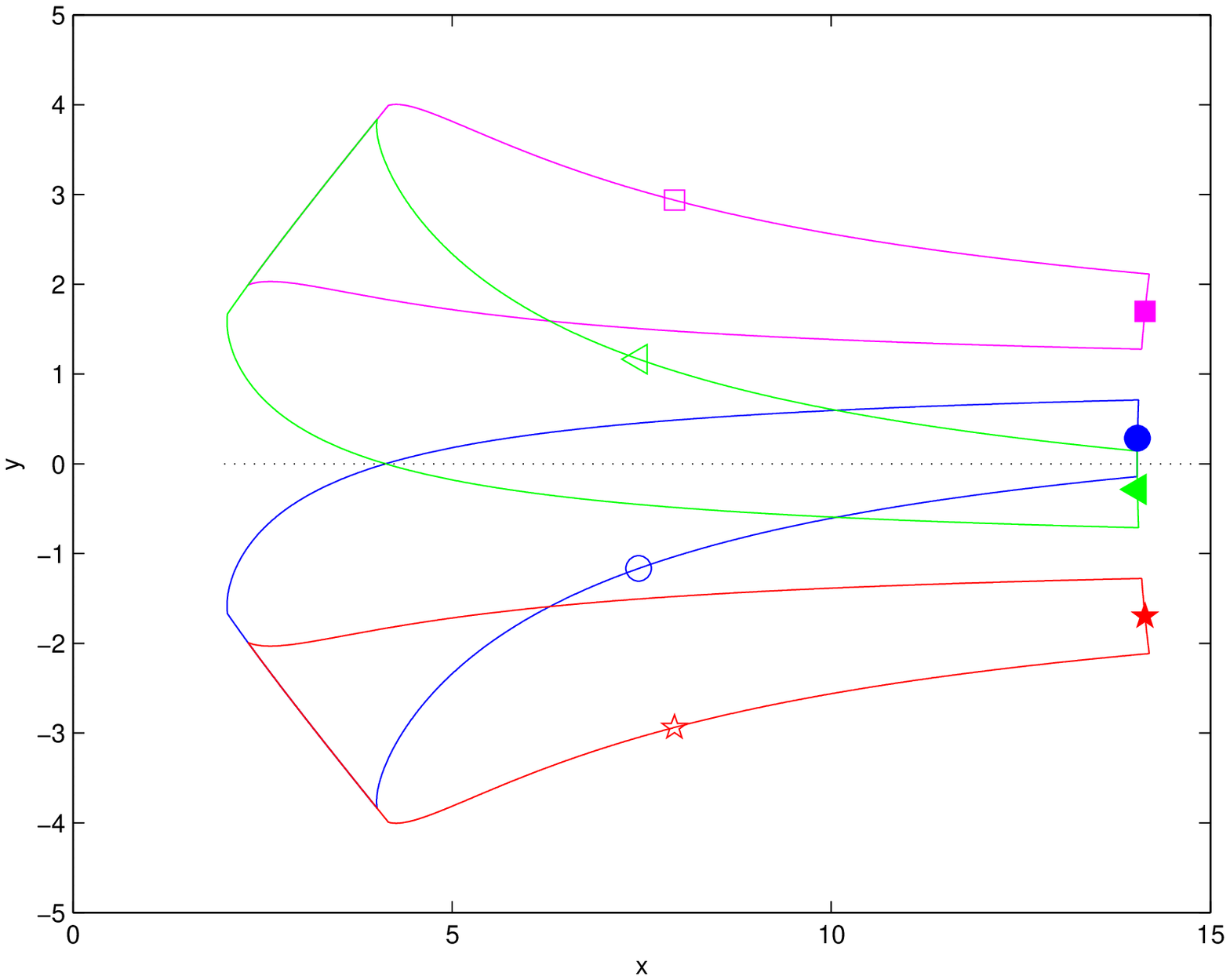}
  \caption{How the change of variables to simplify the convergence factor
    transforms the frequency domains}
  \label{Fig:ChangeOfVars}
\end{figure}
The domain $\td_+$ is compact, and lies below the line $x=y$, as
one can see from the coordinates $(x,y)=(\Re {\mathcal T}(k,\w), \Im
{\mathcal T}(k,\w))$, which satisfy
\begin{subequations}\label{eq:systemxy}
  \begin{eqnarray}
    x^2-y^2& = & \xoo+4\nu^2k^2,\label{eq:systemxy1}\\
    2xy    & = &4\nu (\w+ck). \label{eq:systemxy2}
  \end{eqnarray}
\end{subequations}
We further assume that the coefficients and parameters satisfy
\begin{equation}\label{eq:hypcoefficients}
\mbox{either\quad $\xoo+4\nu^2\km^2 \ne 0$,\quad  or\quad $\wm \ne 0$},
\end{equation}
which implies that there exists an $\alpha>0$ such that
\[
  \forall z\in\td, \quad \Re z \ge \alpha > 0.
\]

We also use the notation $\rho_0(z,p,q):=\frac{s-z}{s+z}$,
$\rho(z,p,q,L):=\rho_0(z,p,q)e^{-Lz/2\nu}$. The min-max problem
(\ref{BestApprox}) in the new $(x,y)$-coordinates takes now
the simple form
\begin{equation}\label{BestApproxtilde}
  \ds \inf_{(p,q)\in\C^2}\ \sup_{z\in
     \td}|\rho(z,p,q,L)| =\sup_{z\in
     \td}|\rho (z,p^*,q^*,L)|
     =: \delta^*(L).
\end{equation}
For convenience, we will also use the notation $R_0(\w,k,p,q)$ or
$R_0(z,p,q)$ for $|\rho_0(z,p,q)|^2 $, and
$R(\w,k,p,q,L)=R(z,p,q,L)=R_0(z,p,q)e^{-Lx/\nu}$.

\subsection{Robin Transmission Conditions}

In this case, we set $q=0$, and we will simply use the above notation
without the parameter $q$ in the arguments, writing for instance
$\rho(z,p,L)$, $\rho_0(z,p)$, \textit{etc.}. We also call the minimum in
the Robin case $\delta_0^*(L)$.

We start with the non-overlapping case, $L=0$, where there is a nice
geometric interpretation of the min-max problem
(\ref{BestApproxtilde}): for a given point $z_o\in \C$ and a parameter
$\delta\in\R$, we introduce the sets
\begin{equation}\label{circle}
  {\cal C}(z_0,\delta)=\{z\in \C;\ \left|\frac{z-z_0}{z+z_0}\right|=\delta\},\,
  \bar{\cal D}(z_0,\delta)=\{z\in \C; \ \left|\frac{z-z_0}{z+z_0}\right|\le \delta\}.
\end{equation}
Note that ${\cal C}(z_0,\delta)$ is a circle centered at
$\frac{1+\delta^2}{1-\delta^2}z_0$, cutting the $x-$axis at the points
$\frac{1-\delta}{1+\delta} z_0$ and $\frac{1+\delta}{1-\delta} z_0$,
and $\bar{\cal D}(z_0,\delta)$ is the associated disk. Now because of
the form of the convergence factor $\rho_0(z,p,q)=\frac{s-z}{s+z}$,
$(p^*,\delta^*)$ is a solution of the min-max problem
(\ref{BestApproxtilde}) if and only if for any $z$ in $\td$, $z$ is in
$\bar{\cal D}(p^*,\delta^*)$. This means geometrically that the
solution of the min-max problem (\ref{BestApproxtilde}) is represented
by the smallest circle centered on the real axis which contains
$\td$. We will use this interpretation as a guideline in the analysis,
also for the overlapping case!

\begin{theorem}\label{th:gennonoverlap}
  For any set of coefficients such that \eqref{eq:hypcoefficients} is
  satisfied, and $\kmm$ and $\wmm$ being finite, the min-max problem
  \eqref{BestApproxtilde} with $L=0$ has a unique solution
  $(\delta^*_0(0),p^*_0(0))$ with $\delta^*_0(0) < 1$. The optimized
  parameter $p^*_0(0)$ is real and positive, and any strict local minimum
  on $\R$ of the real function
 \begin{equation}\label{BestApproxtildereal}
   F_0(p)=\sup_{z\in \td_+}|\rho_0(z,p)|
  \end{equation}
  is the global minimum.
 \end{theorem}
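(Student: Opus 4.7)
The plan is to reduce the complex min-max (\ref{BestApproxtilde}) to a one-variable problem on the positive real axis, and then exploit a quasi-convexity structure that the geometric interpretation of the convergence factor makes natural. First I would rule out complex $p$ with $\Re p\le 0$: writing $|p-z|^2-|p+z|^2=-4(\Re p\cdot\Re z+\Im p\cdot\Im z)$ and using that $\td=\td_+\cup\overline{\td_+}$ contains points whose imaginary part has the sign opposite to $\Im p$, one finds some $z\in\td$ for which $|\rho_0(z,p)|\ge 1$, whence $F(p):=\sup_{z\in\td}|\rho_0(z,p)|\ge 1$. For $\Re p>0$, the conjugation symmetry of $\td$ gives $F(p)=F(\bar p)$, and a direct computation of the difference $|\rho_0(\bar z,p)|^2-|\rho_0(z,\Re p)|^2$ yields the pointwise bound
\[
\max\bigl(|\rho_0(z,p)|,\,|\rho_0(\bar z,p)|\bigr)\ge |\rho_0(z,\Re p)|,\qquad z\in\td_+,
\]
with strict inequality whenever $\Im p\ne 0$ and $z$ has nonzero imaginary part. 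Taking the supremum over $z\in\td_+$ then gives $F(p)\ge F(\Re p)$, so the min-max over $\C$ is attained on the positive real axis, and we may restrict attention to $F_0(p)=\sup_{z\in\td_+}|\rho_0(z,p)|$ for real $p>0$.

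Next, for each fixed $z=x+iy\in\td_+$, a short derivative computation shows that $p\mapsto|\rho_0(z,p)|^2=((p-x)^2+y^2)/((p+x)^2+y^2)$ is unimodal with unique minimum at $p=|z|$, decreasing on $(0,|z|)$ and increasing on $(|z|,\infty)$; hence every sublevel set $\{p:|\rho_0(z,p)|\le c\}$ is an interval, and by intersection the sublevel set $\{p:F_0(p)\le c\}$ is an interval too. Therefore $F_0$ is quasi-convex on $(0,\infty)$. From this I would deduce the third conclusion of the theorem: if $p_1$ were a strict local minimum of $F_0$ but $F_0(p_2)<F_0(p_1)$ for some $p_2\ne p_1$, then in the direction from $p_1$ to $p_2$ one could find points arbitrarily close to $p_1$ with $F_0$-value strictly greater than $F_0(p_1)\ge F_0(p_2)$, contradicting the convexity of the sublevel set $\{p:F_0(p)\le F_0(p_1)\}$.

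For existence of a global minimizer I would combine the continuity of $F_0$, the boundary behavior $F_0(p)\to 1$ as $p\to 0^+$ or $p\to+\infty$ (which holds because $\td_+$ is compact, bounded away from $0$ by hypothesis (\ref{eq:hypcoefficients}), and bounded away from $\infty$ since $\kmm,\wmm$ are finite), and the observation that $F_0(p)<1$ at some $p$, for instance $p=\sqrt{\min_{\td_+}|z|\cdot\max_{\td_+}|z|}$; this last point also delivers $\delta_0^*(0)<1$. Uniqueness of $p_0^*(0)$ then follows from the geometric interpretation of the problem as the search for the smallest disk $\bar{\cal D}(p,\delta)$ of the homographic family containing $\td_+$: if $p_1<p_2$ were two distinct minimizers with common value $\delta^*$, then $\td_+$ would lie in the lens $\bar{\cal D}(p_1,\delta^*)\cap\bar{\cal D}(p_2,\delta^*)$, and a direct construction produces a real $p_3\in(p_1,p_2)$ together with $\delta'<\delta^*$ such that $\bar{\cal D}(p_3,\delta')$ still contains this lens, contradicting optimality. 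I expect the main obstacle to be the first reduction from $\C$ to the real axis: the displayed pointwise inequality is not a soft consequence of harmonicity or of a maximum principle for $p$ alone, and genuinely relies on the conjugation symmetry of $\td$ together with (\ref{eq:hypcoefficients}), which keeps $\Re z$ bounded below by some $\alpha>0$ and thereby prevents $|p+z|^2$ from degenerating under imaginary perturbations of $p$.
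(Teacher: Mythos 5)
Your proposal is correct, but it takes a genuinely different and more self-contained route than the paper. The paper disposes of this theorem in a few lines: it verifies that $\td$ is compact with $\Re z\ge\alpha>0$ and then invokes the abstract best-approximation machinery of \cite{Bennequin:2009:HBA} (existence, uniqueness and the ``strict local minimum is global'' property for degree-zero polynomials), adding only the symmetry remark to get realness of $p^*$. You instead prove everything from elementary properties of the homographic map: (i) the reduction to real positive $p$ via the pointwise bound $\max(|\rho_0(z,p)|,|\rho_0(\bar z,p)|)\ge|\rho_0(z,\Re p)|$, which indeed reduces to the sign of $2uy(A+B)+u^2(B-A)+4uy^3$ with $u=\Im p$, $A=(\Re p-x)^2$, $B=(\Re p+x)^2$, and is therefore valid (and in fact strict for all $z\in\td_+$ once $\Im p\ne0$, since $B-A=4\Re p\,\Re z>0$ -- slightly stronger than the sufficient condition you state, and this stronger strictness is what you actually need to exclude non-real minimizers, since the supremum defining $F_0(\Re p)$ could be attained at a real point of $\td_+$); (ii) quasi-convexity of $F_0$ from the unimodality of each $p\mapsto|\rho_0(z,p)|$, whose derivative is $-4x(|z|^2-p^2)/|z+p|^4$; and (iii) uniqueness. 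For (iii) your ``lens'' construction is stated rather than carried out, but it does follow from tools you already have: for any $z$ with $\Re z>0$ and any $p_3$ strictly between two minimizers $p_1<p_2$, the strict unimodality gives $|\rho_0(z,p_3)|<\max(|\rho_0(z,p_1)|,|\rho_0(z,p_2)|)\le\delta^*$, and compactness of $\td$ then yields $F_0(p_3)<\delta^*$, a contradiction; this is cleaner than exhibiting the smaller disk explicitly. What your approach buys is independence from the cited reference and transparency of where hypothesis \eqref{eq:hypcoefficients} enters (through $\Re z\ge\alpha>0$, which makes $B>A$ and keeps the denominators nondegenerate); what it gives up is the generality of the abstract framework, which the paper reuses verbatim for the overlapping and Ventcel cases where your elementary unimodality argument would have to be redone.
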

\begin{proof}
  Since $\td$ is compact, and with the assumption
  \eqref{eq:hypcoefficients} we have $\Re z \ge \alpha > 0$ with
  $\alpha= \sqrt{\xoo +4\nu^2\km^2}$ in the first case of
  \eqref{eq:hypcoefficients} or $\alpha=\sqrt{2\nu\wm}$ in the second
  case, we can use directly the analysis in \cite{Bennequin:2009:HBA}
  for polynomials of degree zero to get existence and uniqueness. The
  fact that the optimized parameter must be real follows directly from
  the symmetry of $\td$ with respect to the $x$-axis and the geometric
  interpretation, and finally that any strict local minimum is the global
  minimum follows as in \cite{Bennequin:2009:HBA}.
\end{proof}

In \cite{Bennequin:2009:HBA} one can also find a proof of the
existence of a solution to the min-max problem (\ref{BestApproxtilde})
in the overlapping case, and uniqueness is shown for $L$ small enough,
such that
\[
   \delta^*(L) e^{\frac{L}{2\nu} \sup_{z \in \td} \Re z }< 1.
\]
This constraint imposes that $\td$ is bounded in the $x$ direction. We
show now that this constraint is not necessary, using the fact that
in $\td$ the real part of $z$ is strictly larger than the absolute
value of its imaginary part.
\begin{theorem}\label{th:genoverlap}
  For any $L$, for $\kmm$ and $\wmm$ finite or not, and with the
  assumption (\ref{eq:hypcoefficients}), the min-max
  problem \eqref{BestApproxtilde} has a unique solution
  $(\delta^*_0(L),p^*_0(L))$. The optimized parameter $p^*_0(L)$ is
  real, positive, and any strict local minimum on $\R$ of the real
  function
  \begin{equation}\label{BestApproxtilderealoverlap}
    F_L(p)=\sup_{z\in  \td_+}|\rho(z,p,L)|
  \end{equation}
  is the global minimum.
\end{theorem}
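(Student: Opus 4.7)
The plan is to adapt the proof of the analogous result in \cite{Bennequin:2009:HBA}, the novelty being the removal of the compactness assumption $\delta^*(L)e^{(L/2\nu)\sup\Re z}<1$. The geometric ingredient that lets me do without it is the inequality $\Re z \ge |\Im z|$ on $\td$, which follows immediately from \eqref{eq:systemxy1} (since $x^2-y^2=\xoo+4\nu^2k^2\ge 0$), combined with the lower bound $\Re z \ge \alpha > 0$ coming from \eqref{eq:hypcoefficients}. Together these give $|z|\le \sqrt{2}\,\Re z$ and $\Re z \ge \alpha$, so the factor $e^{-Lz/(2\nu)}$ present in $\rho(z,p,L)$ provides decay at infinity in $\td$ strong enough to confine all relevant extrema to a bounded region, which is exactly what the compactness assumption was used for in \cite{Bennequin:2009:HBA}.

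First I would establish existence and continuity of $F_L$. For any real $p>0$ and any $z=x+iy\in\td_+$, a direct computation gives
\[
|\rho(z,p,L)|^2 = \frac{(p-x)^2+y^2}{(p+x)^2+y^2}\,e^{-Lx/\nu} \le e^{-Lx/\nu},
\]
which, combined with $x \ge |z|/\sqrt{2}$, shows that $|\rho(z,p,L)|\to 0$ as $|z|\to\infty$ in $\td_+$, uniformly for $p$ in compact subsets of $(0,\infty)$. Hence $F_L(p)=\sup_{z\in\td_+}|\rho(z,p,L)|$ is attained at some $z^*(p)$ lying in a bounded subregion of $\td_+$, and depends continuously on $p$. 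Since $|\rho_0(z,p)|\to 1$ pointwise as $p\to 0^+$ or $p\to+\infty$, one gets $F_L(p)\to e^{-L\alpha/(2\nu)}$ at both ends, while comparing with a trial value (for instance $p=\alpha$) produces a strictly smaller $F_L$, so the infimum $\delta_0^*(L)<1$ is attained at some interior $p$.

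Second, the reduction to $p\in\R_+$ follows from the symmetry $\td=\td_+\cup\overline{\td_+}$: for any complex $p$ one has $|\rho(\bar z,\bar p,L)|=|\rho(z,p,L)|$, hence $F_L(p)=F_L(\bar p)$, and a convexity argument along the lines of \cite{Bennequin:2009:HBA} shows the optimum can be taken with $p$ real; positivity then follows from the geometric picture, since the disks $\bar{\cal D}(p,\delta)$ of \eqref{circle} can only contain $\td_+\subset\{\Re z>0\}$ when $p>0$. The statement that any strict local minimum of $F_L$ on $\R$ is the global minimum, together with uniqueness of the minimizer, would then follow by the same continuity-and-maximizer analysis as in \cite{Bennequin:2009:HBA}, based on tracking how the maximizing set of $\rho(\cdot,p,L)$ varies with $p$.

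The main obstacle I expect is executing this final local-to-global argument when $\td$ is unbounded: one must check that, for $p$ varying in a neighborhood of the optimum, the superlevel sets $\{z\in\td_+:|\rho(z,p,L)|\ge\delta_0^*(L)/2\}$ remain in a fixed bounded subregion of $\td_+$, so that the equioscillation-type reasoning of \cite{Bennequin:2009:HBA} applies on a compact set. This is precisely where $\Re z \ge |\Im z|$ is indispensable, since it turns the exponential decay in $x$ into decay in $|z|$ and lets the existing proof go through verbatim on the resulting compact subset, with the geometric interpretation of the min-max problem as enclosing $\td$ in a smallest disk $\bar{\cal D}(p,\delta)e^{-Lz/(2\nu)}$ providing the same uniqueness mechanism as in the non-overlapping case.
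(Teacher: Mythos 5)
There is a genuine gap: you have identified the right geometric fact ($x>|y|$ on $\td$) but you deploy it for the wrong purpose, and the step it is actually needed for is missing. The hypothesis you are trying to remove from \cite{Bennequin:2009:HBA} is the quantitative condition $\delta^*(L)\,e^{(L/2\nu)\sup_{z\in\td}\Re z}<1$, and its role there is not to confine the maximizers of $|\rho|$ to a bounded region; it is to guarantee that for \emph{every} $z\in\td$ the admissible set $\{p:\ |\rho(z,p,L)|\le\delta^*\}$, which in the variable $p/z$ is $\bar{\cal D}(1,\delta^* e^{Lx/2\nu})$, is a genuine disk and hence convex --- convexity of the set of minimizers, and with it uniqueness, rests on this. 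Your plan to restrict to the compact superlevel set $\{|\rho(\cdot,p,L)|\ge\eta\}$ and then apply the compact-case theorem ``verbatim'' cannot work: on that set one only gets $x\le \frac{2\nu}{L}\ln(1/\eta)$, hence $\delta^* e^{Lx/2\nu}\le \delta^*/\eta$, and since the set must contain the maximizers one needs $\eta\le\delta^*$, so the bound is $\ge 1$ and the hypothesis of the cited theorem still fails. In other words, there unavoidably remain points $z\in\td$ (those with $Lx/2\nu\ge\ln(1/\delta^*)$) for which the constraint set is the \emph{exterior} of a disk, and your argument says nothing about convexity there.

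The paper's actual mechanism is different. It first shows, by a directional-derivative computation $(\partial_{p_1}-\eps\partial_{p_2})|\rho_0(z,p^*)|^2<0$ valid because $x>|y|$ on $\td$, that any minimizer $p^*=p_1^*+ip_2^*$ must satisfy $p_1^*>0$ and $|p_2^*|\le p_1^*$; consequently $\Re(p^*/z)>0$ for all $z\in\td$. Then, in the problematic case $\delta^* e^{Lx/2\nu}\ge 1$, the exterior of the disk $\bar{\cal D}(1,\delta^* e^{Lx/2\nu})$ contains the half-plane $\{\Re\ge 0\}$ (the circle with $z_0=1$ meets the real axis only on the negative half-line), which is convex and contains $p^*/z$ and $\tilde p^*/z$ for any two minimizers; together with the convex-disk case this gives convexity of the set of minimizers, and uniqueness then follows from the equioscillation results (Theorems 2.11--2.12 of \cite{Bennequin:2009:HBA}). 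Your proposal never establishes $|p_2^*|\le p_1^*$ nor the half-plane inclusion, so the uniqueness and local-to-global claims remain unproved. (Your decay-at-infinity observation is fine as far as it goes --- it is consistent with existence, which \cite{Bennequin:2009:HBA} already provides --- but it does not substitute for the convexity argument.)
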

\begin{proof}
By Theorem 2.8 in \cite{Bennequin:2009:HBA}, we know that a (possibly
complex) solution $p^*=p^*_1+ip^*_2$ of \eqref{BestApproxtilde} exists.
We now compute explicitly the modulus of the convergence factor,
\[
  |\rho_0(z,p)|^2=\ds\frac{(x-p_1)^2+(y-p_2)^2}{(x+p_1)^2+(y+p_2)^2}.
\]
We first note that for any $z$, and any $(p_1,p_2)$ with $p_1 > 0$, we
have $|\rho_0(z,-p_1+ip_2)| > |\rho_0(z,p_1+ip_2)|$, and therefore we
must have $p^*_1>0$. Next, in order to show that $|p^*_2| \le p^*_1$,
we assume the contrary, $|p^*_2| > p^*_1$, to reach a contradiction
(in particular this means that $p^*_2 \ne 0$). We calculate the
gradient,
\[
  \begin{array}{l}
  \partial_{p_1}|\rho_0(z,p)|^2= -4\ds\frac{x(x^2+y^2+p_2^2-p_1^2)-2yp_1p_2}
  {((x+p_1)^2+(y+p_2)^2)^2},\\
  \partial_{p_2}|\rho_0(z,p)|^2= -4\ds\frac{y(x^2+y^2+p_1^2-p_2^2)-2xp_1p_2}
  {((x+p_1)^2+(y+p_2)^2)^2},
  \end{array}
\]
which gives, with $\eps=\mbox{sign}(p^*_2)$,
\[
(\partial_{p_1}-\eps \partial_{p_2})|\rho_0(z,p^*)|^2=
-4\ds\frac{(x-\eps y)(x^2+y^2+2\eps p_1p_2)+(x+\eps y)(p_2^2-p_1^2)}
{((x+p_1)^2+(y+p_2)^2)^2} \,<  \, 0,
\]
where we used the fact that $x>|y|$ as we noted earlier (see Figure
\ref{Fig:ChangeOfVars}). This shows that $|\rho_0(z,p)|e^{-Lx/2\nu}$
decays in the neighborhood of $p^*$, in the direction $(1,-\eps)$, if
$|p^*_2| > p^*_1$, which is in contradiction with the fact that the
minimum is reached, and hence we must have $|p^*_2| \le p^*_1$.

Now for any $z$ in $\td$, since $x>|y|$ and $|p^*_2| \le p^*_1$,
we have
\[
  \ds \Re \frac{p}{z} = \frac{xp^*_1+yp^*_2}{|z|^2}\, > \, 0.
\]
This allows us to prove that the set of best approximations is convex:
consider the disk defined in \eqref{circle}. We have seen that
$(p^*(L),\delta^*(L))$ is a solution of the best approximation problem
\eqref{BestApproxtilde}, if and only if for any $z$ in $\td$, $z$ is
also in $\bar{\cal D}(p^*(L),\delta^*(L) e^{Lx/2\nu})$, which is
equivalent by dividing numerator and denominator by $z$ to saying that
$p^*/z$ belongs to $\bar{\cal D}(1,\delta^*(L) e^{Lx/2\nu})$. For any
$z$ in $\td$, either $\delta^*(L) e^{Lx/2\nu} < 1$ and thus $p^*/z$ is on the
inside of the disk $\bar{\cal
  D}(1,\delta^*(L) e^{Lx/2\nu})$ which is convex, or $\delta^*(L)
e^{Lx/2\nu} \ge 1$ and thus $p^*/z$ is outside of the disk
$\bar{\cal D}(\delta^*(L) e^{Lx/2\nu},1)$. Now since the circle with
$z_0=1$ cuts the $x$-axis only on the negative half line, see the
explicit calculation after (\ref{circle}), the outside of the
disk contains the half-plane $x \ge 0$, which is also convex.

Using the convexity, we can now show uniqueness: let $p^*$ and
$\tilde{p}^*$ be two solutions of the best approximation problem with
associated $\delta^*$. For a given $z$ in $\td$, in the first case,
$p^*/z $ and $\tilde{p}^*/z $ are both inside the disk, which is
convex. In the second case, they both belong to the half-plane $x \ge
0$, which is also convex, because by assumption
(\ref{eq:hypcoefficients}) the real part of $z$, and hence with the
properties on $p^*=p_1^*+ip_2^*$ also the real parts of $p^*/z$ and
$\tilde{p}^*/z$ are strictly positive. In both cases therefore, any
point $p/z$ in the segment joining $p^*/z $ and $\tilde{p}^*/z $ is
also in the disk $\bar{\cal D}(1,\delta^*(L) e^{Lx/2\nu})$, which
means that $\sup_{z\in \td}\left|\frac{z-p}{z+p}e^{-Lz/2\nu}\right|
\le \delta^*(L)$. Since $\delta^*(L)$ is the minimum, $p$ is also a
minimizer. To conclude the proof of uniqueness, we can use now Theorem
2.11 and the proof of Theorem 2.12 from \cite{Bennequin:2009:HBA},
using a classical equioscillation argument.

To see that the minimizer is real, we use again the symmetry of $\td$
with respect to the real axis, and the results on the strict local
minimum implying the global minimum follows as in the non-overlapping
case.

\end{proof}

\subsection{Ventcel Transmission Conditions}

For the case of Ventcel conditions, $q\ne 0$, we use the abstract
results from \cite{Bennequin:2009:HBA}.
\begin{theorem}\label{th:gennonoverlappq}
  For any set of coefficients such that the assumption
  \eqref{eq:hypcoefficients} is satisfied, and with $\kmm$ and $\wmm$
  finite, the min-max problem \eqref{BestApproxtilde} with $L=0$ has a
  unique solution $(\delta^*_1(0),p^*_1(0),q^*_1(0))$ with
  $\delta_1^*(0)<1$. The coefficients $p^*_1(0)$and $q^*_1(0))$ are
  real, and any strict local minimum in $\R_+\times\R_+$ of the real
  function
  \begin{equation}\label{BestApproxtildereal2}
    F_0(p,q)=\sup_{z\in \td_+}\left|\rho_0(z,p,q)\right|
  \end{equation}
  is the global minimum.
\end{theorem}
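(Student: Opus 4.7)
The plan is to apply directly the abstract theory of homographic best approximation developed in \cite{Bennequin:2009:HBA}, specializing to the case where the symbol $s = p + q(\nu k^2 + i(\omega+ck))$ is a polynomial of degree one in the complex variable $\nu k^2 + i(\omega+ck) = (z^2-x_0^2)/(4\nu)$. This is the natural extension of the degree-zero (Robin) case handled in Theorem \ref{th:gennonoverlap}, and the structure of the proof should closely mirror it.

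First, I would establish existence together with $\delta^*_1(0)<1$. Since $\kmm$ and $\wmm$ are finite, the domain $\td$ is compact, and assumption \eqref{eq:hypcoefficients} guarantees $\Re z \ge \alpha >0$ uniformly on $\td$, so Theorem 2.8 of \cite{Bennequin:2009:HBA} applies to yield a (possibly complex) minimizer $(p^*_1(0),q^*_1(0))$ with $\delta^*_1(0)<1$. Reality of the optimal parameters would then follow from the symmetry $\td = \td_+\cup \overline{\td_+}$: complex conjugation sends any minimizer $(p,q)$ to another minimizer $(\bar p,\bar q)$, and once uniqueness is known, both must coincide, forcing $p^*_1(0),q^*_1(0)\in\R$.

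For uniqueness and the local/global equivalence, I would adapt the convexity and equioscillation argument used in Theorem \ref{th:genoverlap} and in Theorems 2.11–2.12 of \cite{Bennequin:2009:HBA} to the two-parameter setting. The key observation is that for each fixed $z\in \td$, the sub-level condition $|\rho_0(z,p,q)|\le \delta^*_1(0)$ reads $s(z)\in \bar{\cal D}(z,\delta^*_1(0))$, a disk in the $s$-plane; since the map $(p,q)\mapsto s(z)$ is affine, its preimage is a convex subset of $\C^2$, and the set of minimizers, being the intersection of these preimages over $z\in \td$, is itself convex. Combining convexity with an equioscillation characterization on a real minimizing pair closes uniqueness in $\R_+\times\R_+$ and promotes any strict local minimum to the global one.

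The main obstacle I anticipate is essentially bookkeeping rather than a genuinely new difficulty: the abstract results of \cite{Bennequin:2009:HBA} are stated for general polynomial degree, so the Ventcel case is covered once the variable substitution identifying $s$ as a degree-one polynomial is made and the standard hypotheses (compactness of $\td$, uniform $\Re z \ge \alpha>0$) are checked. The subtler point, compared with the Robin case, is the verification that the equioscillation characterization still forces a unique real pair when there are two free parameters $(p,q)$; this is handled in \cite{Bennequin:2009:HBA} by counting the number of extremal points of $|\rho_0|$ on $\td_+$, and would just need to be invoked here.
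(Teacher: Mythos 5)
Your proposal is correct and follows essentially the same route as the paper, which in fact gives no detailed proof of this theorem at all: it simply invokes the abstract results of \cite{Bennequin:2009:HBA} for polynomial symbols of degree one, exactly as in the Robin case (compactness of $\td$, the uniform bound $\Re z\ge\alpha>0$ from \eqref{eq:hypcoefficients}, symmetry of $\td$ for reality, and the convexity/equioscillation machinery for uniqueness and the local-to-global implication). Your write-up is a faithful unpacking of that citation, so there is nothing to object to.
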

\begin{theorem}\label{th:genoverlappq}
  For any $L >0$, for $\kmm$ and $\wmm$ finite or not, and with the
  assumption (\ref{eq:hypcoefficients}) the min-max problem
  \eqref{BestApproxtildepq} has a solution.
  \begin{itemize}
     \item If $\td$ is compact and $L$ sufficiently small, the
       solution is unique and any strict local minimum of the real
       function
       \begin{equation}\label{BestApproxtildereal2overlap}
         F_L(p,q)=\sup_{z\in \td_+}\left|\rho(z,p,q,L)\right|
       \end{equation}
       is the global minimum.
    \item If $\td$ is not compact, but $L$ sufficiently small, if
      $F_L$ has a strict local minimum in $\R_+\times\R_+$, it is the
      unique global minimum.
  \end{itemize}
\end{theorem}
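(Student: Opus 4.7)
The plan is to decompose the proof into three pieces mirroring the structure of the statement: existence in general, uniqueness when $\td$ is compact and $L$ is sufficiently small, and uniqueness of a strict local minimum when $\td$ is non-compact. The first two pieces can be read off from the abstract framework of \cite{Bennequin:2009:HBA} applied to the two-parameter Ventcel setting, since $s=p+q(\nu k^2+i(\omega+ck))$ depends affinely on $(p,q)$ exactly as required by that framework. Specifically, Theorem 2.8 of \cite{Bennequin:2009:HBA} supplies a (possibly complex) global minimizer of $F_L$, and Theorem 2.12 of \cite{Bennequin:2009:HBA} delivers uniqueness and the strict-local-implies-global property in the compact case once $L$ is small enough that $\delta^*(L)\,e^{L\sup_{\td}\Re z/(2\nu)}<1$.

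The genuine novelty lies in the non-compact case, and here the argument follows the pattern of Theorem \ref{th:genoverlap}, promoted to two complex parameters. First, by direct computation of $|\rho(z,p,q,L)|^2$ as a function of $\Re s$ and $\Im s$, and by exploiting that $\Re z>0$ throughout $\td$, I would show that flipping the sign of $\Re s$ strictly decreases $|\rho|$ pointwise; together with the conjugation symmetry of $\td$ with respect to the real axis, this forces any global minimizer to satisfy $\Re p^*>0$, $\Re q^*\ge 0$, and to be real. In particular the global minimum already sits in $\R_+\times\R_+$, where by hypothesis a strict local minimum is located.

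Second, I would establish that the set of minimizers is convex in $\C^2$. For each fixed $z\in\td$ the constraint $|\rho(z,p,q,L)|\le\delta^*(L)$ means that $s$ lies in a certain disk in $\C$, or equivalently, after dividing numerator and denominator by $z$ as in the proof of Theorem \ref{th:genoverlap}, that $s/z$ lies in $\bar{\cal D}(1,\delta^*(L)e^{Lx/2\nu})$. When $\delta^*(L)e^{Lx/2\nu}\ge 1$ the admissible region becomes the complement of a disk that cuts the real axis only on the negative half-line, and the property $\Re z>|\Im z|$ on $\td$ forces $s/z$ into the convex half-plane $\Re\ge 0$. Because the map $(p,q)\mapsto s$ is $\R$-affine, these constraints pull back to convex subsets of $(p,q)$-space, and their intersection over $z\in\td$ is convex. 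A classical equioscillation argument (Theorem 2.11 of \cite{Bennequin:2009:HBA}) then converts convexity of the minimizer set together with the strict local minimum hypothesis into uniqueness of the global minimum.

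The main obstacle is precisely the combination of the unbounded domain and the two-parameter setting: the exponential factor $e^{-Lz/2\nu}$ inflates the allowed disk radius as $\Re z\to\infty$, and it is not a priori obvious that the family of pulled-back convex regions in $(p,q)$-space has a convex intersection. The small-$L$ hypothesis is what keeps this geometry under control, and the crucial fact $\Re z>|\Im z|$ on $\td$, noted after \eqref{eq:systemxy}, is what replaces the compactness assumption on $\td$ that was required in the analogous theorem of \cite{Bennequin:2009:HBA}. Extending the disk/half-plane dichotomy from the one-parameter Robin case to the Ventcel case is the delicate point, but once it is in place, the rest of the argument is a routine application of the equioscillation machinery already available in \cite{Bennequin:2009:HBA}.
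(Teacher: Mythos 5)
First, be aware that the paper itself gives no proof of this theorem: it is stated without a proof environment and attributed wholesale to the abstract results of \cite{Bennequin:2009:HBA}, with the non-compact case deliberately weakened to a conditional statement. Your plan to actually prove the non-compact case by promoting the argument of Theorem \ref{th:genoverlap} to two parameters is therefore more ambitious than what the paper does, and the skeleton you propose (affine dependence of $s$ on $(p,q)$, convexity of the pulled-back constraints, the disk versus complement-of-disk dichotomy, equioscillation) is the right one.

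The gap is in the step you describe as ``flipping the sign of $\Re s$.'' In the Robin case $s=p$ is independent of $z$, so the sign of $\Re s$ can be flipped uniformly over $\td$, and the explicit directional-derivative computation $(\partial_{p_1}-\eps\,\partial_{p_2})|\rho_0(z,p^*)|^2<0$ is what delivers the a priori bounds $p_1^*>0$, $|p_2^*|\le p_1^*$ that place $p^*/z$ in the half-plane $\Re\ge 0$ for \emph{every} $z\in\td$. In the Ventcel case $\Re s=\Re\tp+\Re\tq\,(x^2-y^2)-2xy\,\Im\tq$ depends on $z$: on an unbounded $\td$ the term $\tq z^2$ dominates for large $|z|$ while $\tp$ dominates near $z_1$, so no single sign change of $(p,q)$ flips $\Re s$ (or $\Re(s/z)$) uniformly, and the analogue of the Robin directional-derivative computation must now control four real parameters against a $z$-dependent weight. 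You would need to prove, for a possibly complex global minimizer $(p^*,q^*)$ supplied by Theorem 2.8 of \cite{Bennequin:2009:HBA}, bounds of the type $\Re(\tp^*/z+\tq^* z)\ge 0$ for all $z\in\td$ before the half-plane branch of the dichotomy becomes available; without them, at a point $z$ where $\delta^*(L)e^{Lx/2\nu}\ge 1$ the admissible region in $(p,q)$-space is the preimage of the complement of a disk, which is not convex, and convexity of the minimizer set does not follow. This is presumably why the paper retreats to the conditional formulation: even to prove the conditional statement you must localize the global minimizer well enough to run the segment argument between it and the assumed real positive strict local minimum, and that localization is exactly the step your sketch asserts but does not supply. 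Note also that realness of the minimizer is normally deduced \emph{from} convexity (the midpoint of a minimizer and its conjugate is a real minimizer), so your ordering --- realness first, convexity second --- is circular as written.
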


\subsection{Outline of the Analysis}

The abstract theorems in the previous subsections provide a guideline
for the proof of the main results in section \ref{sec:modelproblem}:
\begin{enumerate}
\item The existence and uniqueness is guaranteed by the
  abstract results.
\item The convergence factor being analytic on the compact $D$, its
  maximum is reached on the boundary. We thus study the variations of
  $R$ for fixed $p$ and $q$, on the exterior boundaries of
  $\td_+$. Due to the complexity of the problem, this study must be
  asymptotic, assuming asymptotic properties of $p$ and $q$.
\item There are two local maxima in the Robin case, and three local
  maxima in the Ventcel case. We prove that there exists a value
  $\bar{p}$ (resp. $(\bar{p},\bar{q})$) such that these two
  (resp. three) values coincide. The corresponding points $z$ are
  called equioscillation points.
\item We give the asymptotic values of these points and $\bar{p}$
  (resp. $(\bar{p},\bar{q})$).
\item We prove that $\bar{p}$ (resp. $(\bar{p},\bar{q})$) is a strict
  local minimizer for the function $F$.
\item We again invoke the abstract results to show that the strict
  local minimizer is in fact the global minimizer.
\end{enumerate}
Note that point 3 is not at all easy, since many cases have to be
analyzed.  We will treat the cases $\Delta t =C_h h$ and $\Delta t
=C_h h^2$ in the same paragraphs. But for the clarity of the paper, we
treat the Robin and Ventcel cases separately.

\subsection{Study of the Boundaries of the Frequency Domain}

The boundaries of $\td_+$ are all branches of the same function
$(\w,k)\mapsto z=x+iy$. Combining the equations \eqref{eq:systemxy},
we see that $x$, $y$ also satisfy the equation
\begin{equation}\label{eq:systemxy2b}
   x^2+y^2    = \sqrt{(\xoo+4\nu^2k^2)^2+16\nu^2 (\w+ck)^2},
\end{equation}
which, together with the constraints $ x \ge 0, y\ge 0$, gives us a
closed form parametric representation for $\td_+$:
\begin{equation}\label{eq:systemxy3}
  \left\{\begin{array}{lcl}
    x & = & \sqrt{
    \frac{1}{2}\sqrt{(\xoo+4\nu^2k^2)^2+16\nu^2 (\w+ck)^2}
    +\frac{1}{2}(\xoo+4\nu^2k^2)},\\
    y & = &\sqrt{
     \frac{1}{2}\sqrt{(\xoo+4\nu^2k^2)^2+16\nu^2 (\w+ck)^2}
     -\frac{1}{2}(\xoo+4\nu^2k^2)}.
  \end{array}\right.
\end{equation}
The boundary curves $\w\mapsto (x(\w,k), y(\w,k))$ for $k=\km$ or
$k=\kmm$ are hyperbolas, as one can see directly from
(\ref{eq:systemxy1}). They are shown in Figure \ref{fig:squarerootdomainscp},
\begin{figure}
  \centering
  \begin{tabular}{cc}
  \psfrag{z1}{\footnotesize $z_1$}
  \psfrag{z2}{\footnotesize $z_2$}
  \psfrag{z3}{\footnotesize $z_3$}
  \psfrag{z4}{\footnotesize $z_4$}
  \psfrag{cw}{\footnotesize $\ccw$}
  \psfrag{cn}{\footnotesize $\ccn$}
  \psfrag{csw}{\footnotesize $\ccsw$}
  \psfrag{ce}{\footnotesize $\cce$}
  \includegraphics[height=0.35\textwidth]{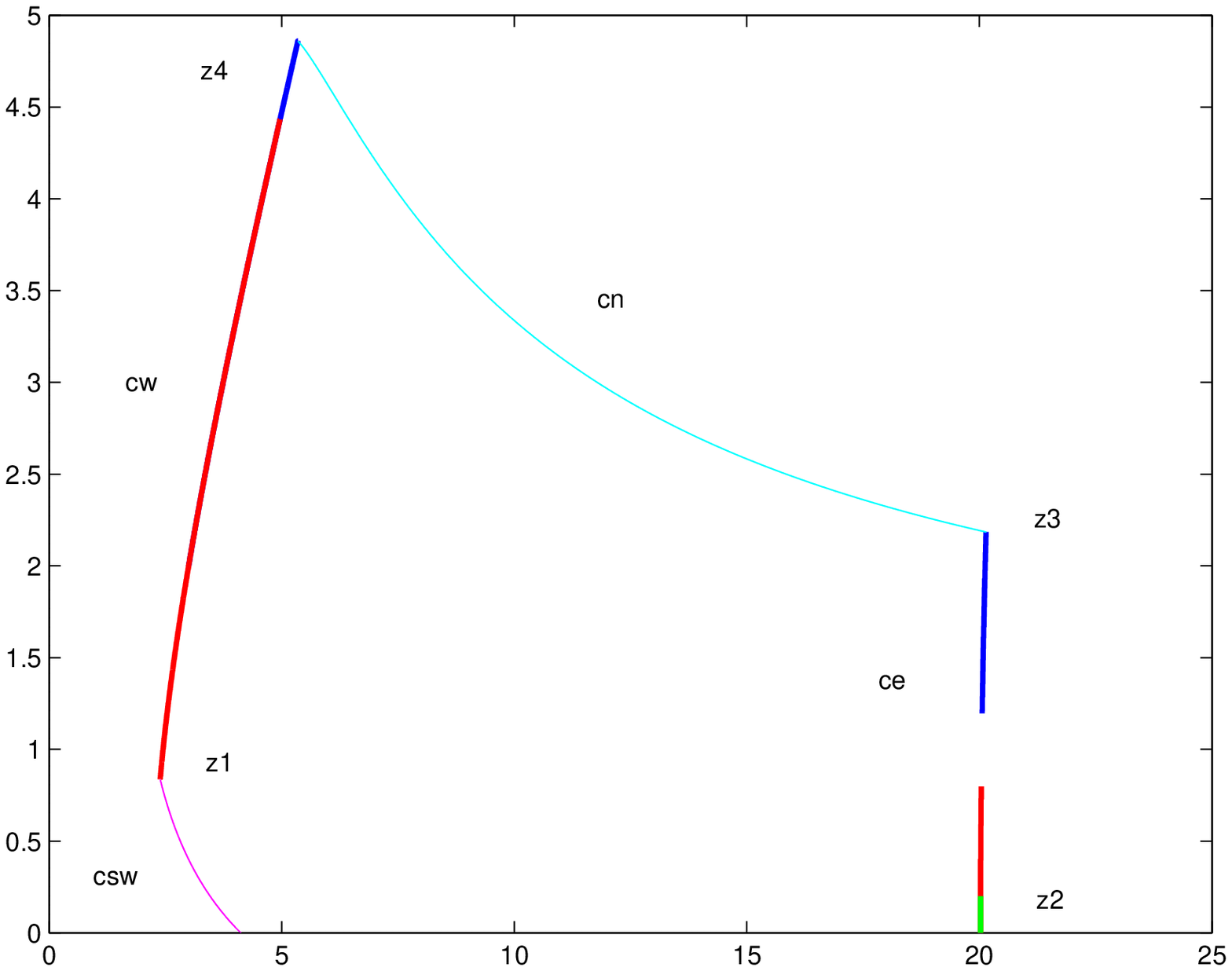}
  &
  \psfrag{z1}{\footnotesize $z_1$}
  \psfrag{z2}{\footnotesize $z_2$}
  \psfrag{z3}{\footnotesize $z_3$}
  \psfrag{z4}{\footnotesize $z_4$}
  \psfrag{cw}{\footnotesize $\ccw$}
  \psfrag{cn}{\footnotesize $\ccn$}
  \psfrag{csw}{\footnotesize $\ccsw$}
  \psfrag{cse}{\footnotesize $\ccse$}
  \psfrag{ce}{\footnotesize $\cce$}
  \includegraphics[height=0.35\textwidth]{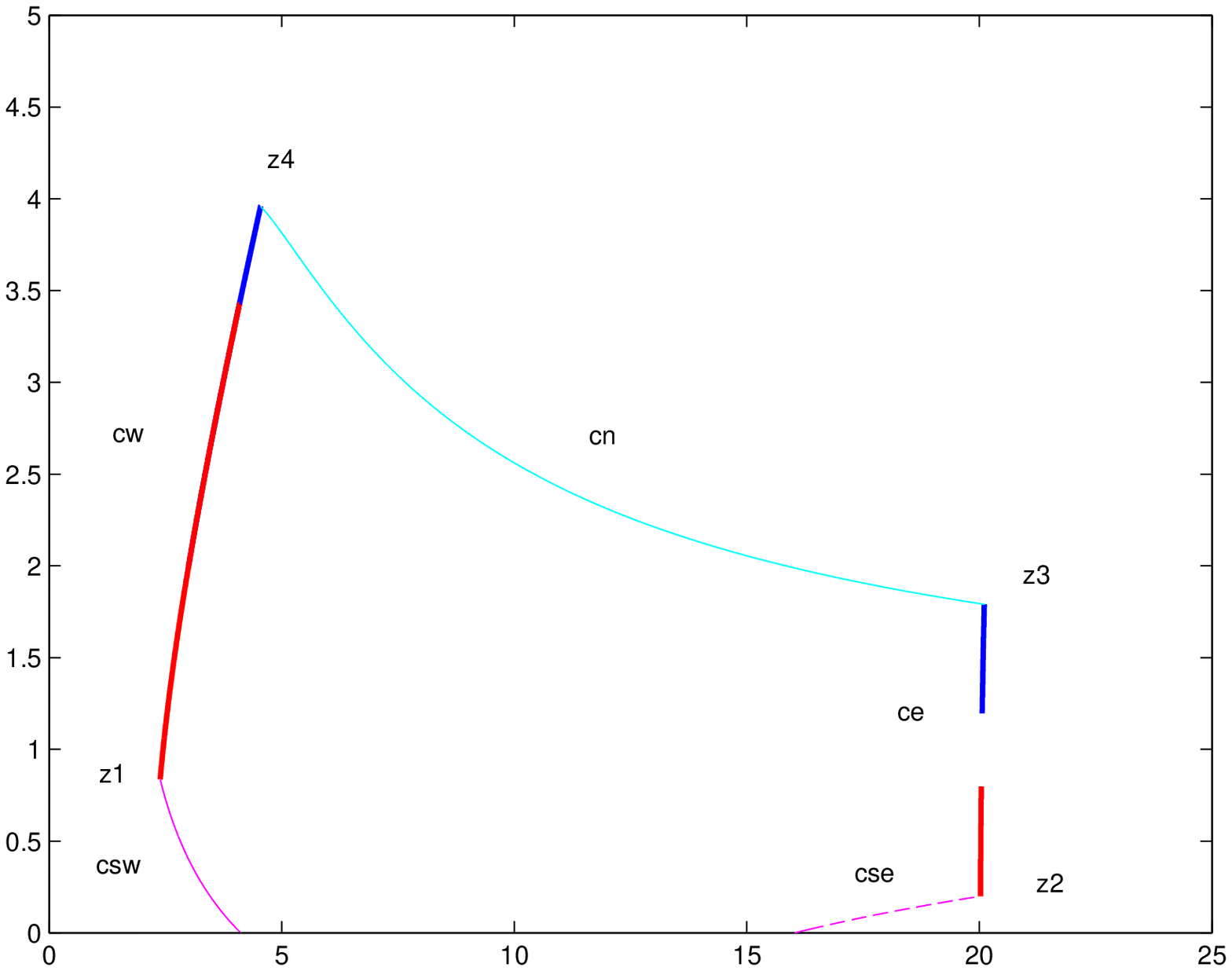}\\
  $|ck_m|< \omega_m,\ |ck_M|< \omega_M$ &
  $|ck_m|< \omega_m,\ |ck_M|> \omega_M$\\
  \psfrag{z1}{\footnotesize $z_1$}
  \psfrag{z2}{\footnotesize $z_2$}
  \psfrag{z3}{\footnotesize $z_3$}
  \psfrag{z4}{\footnotesize $z_4$}
  \psfrag{cw}{\footnotesize $\ccw$}
  \psfrag{cn}{\footnotesize $\ccn$}
  \psfrag{csw}{\footnotesize $\ccsw$}
  \psfrag{ce}{\footnotesize $\cce$}
  \includegraphics[height=0.35\textwidth]{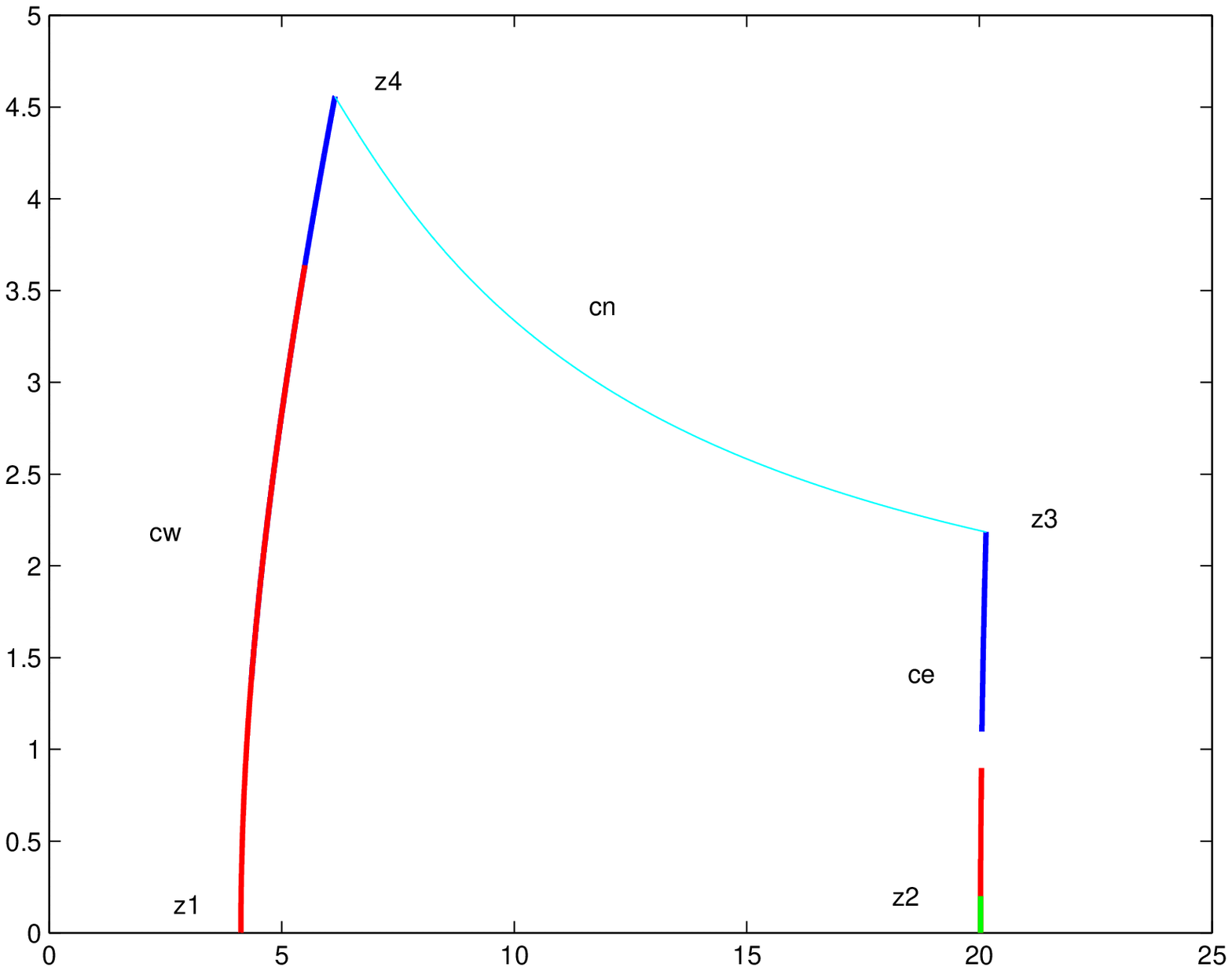}
  &
  \psfrag{z1}{\footnotesize $z_1$}
  \psfrag{z2}{\footnotesize $z_2$}
  \psfrag{z3}{\footnotesize $z_3$}
  \psfrag{z4}{\footnotesize $z_4$}
  \psfrag{cw}{\footnotesize $\ccw$}
  \psfrag{cn}{\footnotesize $\ccn$}
  \psfrag{csw}{\footnotesize $\ccsw$}
  \psfrag{cse}{\footnotesize $\ccse$}
  \psfrag{ce}{\footnotesize $\cce$}
  \includegraphics[height=0.35\textwidth]{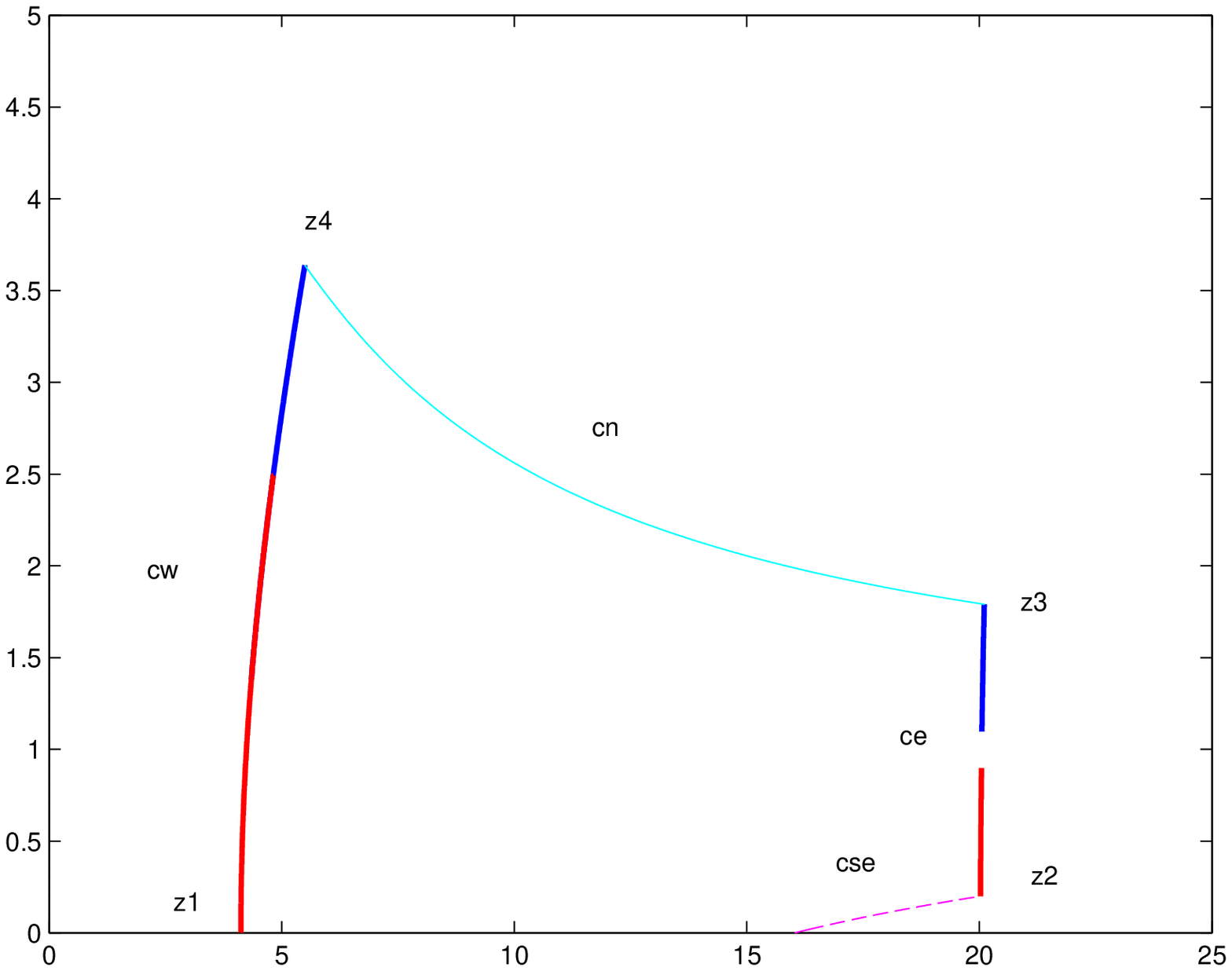}\\
  $|ck_m|> \omega_m,\ |ck_M|< \omega_M$&
  $|ck_m|> \omega_m,\ |ck_M|> \omega_M$
  \end{tabular}
  \caption{Illustration of the domain $\td_+$ in the $(x,y)$ plane}
  \label{fig:squarerootdomainscp}
\end{figure}
and using $\soc$ to denote the sign of $c$, the boundary on the left
(west) is given by
\begin{equation}\label{eq:hypwest}
\ccw=
\textcolor[rgb]{0.22,0.26,0.78}{z([\wm,\wmm],\soc\km)}
\cup \textcolor[rgb]{0.98,0.00,0.00}{z([\max(\wm,|c|\km),\wmm],-\soc\km)}
\end{equation}
and the boundary on the right (east) is given by
\begin{equation}\label{eq:hypeast}
\cce=
\textcolor[rgb]{0.98,0.00,0.00}{z([-min(|c|\kmm,\wmm),-\wm],\soc\kmm)}
\cup \textcolor[rgb]{0.22,0.26,0.78}{z([\wm,\wmm],\soc\kmm)}
\cup \textcolor[rgb]{0.00,0.50,0.00}{z([|c|\kmm,\wmm],-\soc\kmm)},
\end{equation}
with the convention that $[a,b]= \emptyset$ whenever $a > b$.
The corner points of $\td_+$ are
$$
\begin{array}{l}
z_1=z(\max(\wm,|c|\km),-\soc\km),\\
z_2= z(-\min(\wmm,|c|\kmm), \soc\km),\\
z_3=z(\wmm,\soc\kmm),\\
z_4=z(\wmm,\soc\km).
\end{array}
$$
In order to complete the boundary of $\td_+$, we analyze now the
curves at constant $\w$. The northern curve joins $z_3$ and $z_4$,
\begin{equation}\label{eq:hypnorth}
\ccn=
z(\wmm,\soc[\km,\kmm]).
\end{equation}
The southern curve can have two components, which are
\begin{equation}\label{eq:hypsouth}
\ccsw=
z(\wm,-\soc[\km,\frac{\wm}{|c|}]),\quad
\ccse=z(-\wmm,\soc[\frac{\wmm}{|c|},\kmm]).
\end{equation}

\begin{theorem}\label{lem:convexitycurve}
The curve $k\mapsto (x(\w,k), y(\w,k))$ has a vertical tangent in the
first quadrant if and only if $\w > 0$. It is reached for
\begin{equation}\label{eq:formulaforkbp}
  \tk_1(\w)=\frac{c}{8\nu^2\w}
  \left(\xoo+c^2-\sqrt{(\xoo+c^2)^2+16\nu^2\w^2}\right).
\end{equation}
It has a horizontal tangent in the first quadrant if and only if $\w >
0$. It is reached for
    \begin{equation}\label{eq:formulaforkb2}
  \tk_2(\w)=\frac{c}{8\nu^2\w}
  \left(\xoo+c^2+\sqrt{(\xoo+c^2)^2+16\nu^2\w^2}\right).
\end{equation}
For $\w c=0$, the curve is monotone.
\end{theorem}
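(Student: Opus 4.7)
My plan is to use implicit differentiation on the parametric description of the curve. Regarding \eqref{eq:systemxy1}--\eqref{eq:systemxy2} as defining $x$ and $y$ smoothly as functions of $k$ with $\w$ fixed, and differentiating with respect to $k$, I obtain the linear system
\[
x\,x'-y\,y'=4\nu^2 k,\qquad y\,x'+x\,y'=2\nu c,
\]
whose determinant $x^2+y^2$ is strictly positive on $\td_+$ by assumption \eqref{eq:hypcoefficients}. Cramer's rule then gives
\[
x'(k)=\frac{4\nu^2 kx+2\nu c y}{x^2+y^2},\qquad y'(k)=\frac{2\nu c x-4\nu^2 k y}{x^2+y^2}.
\]
A vertical tangent $x'=0$ is thus equivalent to $2\nu k x+c y=0$, which on the open first quadrant ($x,y>0$) forces $ck<0$; similarly a horizontal tangent $y'=0$ is equivalent to $c x=2\nu k y$ and forces $ck>0$.

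Next I would eliminate $x$ and $y$ to reduce each tangency condition to a polynomial equation in $k$. From $y=-2\nu k x/c$ and the product relation $2xy=4\nu(\w+ck)$ I get $x^2=-c(\w+ck)/k$, while combining $x^2-y^2=\xoo+4\nu^2 k^2$ with $y^2=4\nu^2 k^2 x^2/c^2$ gives the alternative expression $x^2=c^2(\xoo+4\nu^2 k^2)/(c^2-4\nu^2 k^2)$. Equating the two, clearing denominators, and cancelling the common $4\nu^2 c k^3$ term reduces the vertical-tangent condition to the quadratic
\[
4\nu^2\w k^2-c(\xoo+c^2)k-c^2\w=0.
\]
A symmetric elimination starting from $x=2\nu k y/c$ produces the same quadratic. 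Its two real roots are precisely the formulas \eqref{eq:formulaforkbp} and \eqref{eq:formulaforkb2}, since they may be written
\[
k=\frac{c}{8\nu^2\w}\!\left((\xoo+c^2)\pm\sqrt{(\xoo+c^2)^2+16\nu^2\w^2}\right).
\]
To attach each root to the correct tangency type, I note that the square root exceeds $\xoo+c^2$, so $c\tk_1$ has the sign of $-\w$ while $c\tk_2$ has the sign of $\w$. Hence the vertical-tangent requirement $ck<0$ at $\tk_1$ and the horizontal-tangent requirement $ck>0$ at $\tk_2$ both hold exactly when $\w>0$, yielding simultaneously the ``if and only if'' statement and the identification of the formulas.

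For the degenerate cases $\w c=0$, I would treat them directly from the Cramer expressions. If $c=0$, then $x'=4\nu^2 kx/(x^2+y^2)$ and $y'=-4\nu^2 k y/(x^2+y^2)$, both of constant nonzero sign on each branch $k>0$ or $k<0$, so the curve is monotone. If $\w=0$ with $c\neq0$, the quadratic collapses to $c(\xoo+c^2)k=0$, forcing $k=0$; using $y=2\nu ck/x$ together with the closed form \eqref{eq:systemxy3}, a short sign check gives $x^2>4\nu^2k^2$, which keeps $y'=2\nu c(x^2-4\nu^2k^2)/(x(x^2+y^2))$ of constant sign on each branch, while $x'$ has the sign of $k$ up to positive factors. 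The main obstacle I anticipate is sign bookkeeping: the elimination involves dividing by $k$ and $c$ and squaring one relation to remove $y$, so at the end I would verify that both roots of the quadratic are genuine (non-spurious) tangent points and that the first-quadrant condition and the type of tangency match the formulas root by root.
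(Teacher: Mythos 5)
Your strategy is the same as the paper's: differentiate the defining relations \eqref{eq:systemxy} in $k$, solve by Cramer's rule, reduce each tangency to a linear relation between $x$ and $y$, eliminate to the quadratic $4\nu^2\w k^2-c(\xoo+c^2)k-c^2\w=0$ (identical to the paper's $Q_\w(kc)=0$ after multiplying through by $c$), and treat $\w c=0$ separately; your degenerate-case computations also reproduce the paper's. The one place where your argument is genuinely incomplete is the step you yourself flag at the end, and it is not optional bookkeeping: checking only the sign of $ck$ at the two roots does not give the ``if and only if''. A vertical tangent at a point of the open first quadrant requires \emph{both} $ck<0$ \emph{and} $\w+ck>0$ (the latter because $x^2=-\frac{c}{k}(\w+ck)$ and $y^2=-4\nu^2\frac{k}{c}(\w+ck)$ must be positive, equivalently because $y>0$ forces $\w+ck>0$ through $2xy=4\nu(\w+ck)$). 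With only the sign of $ck$ in hand, your argument does not exclude the case $\w<0$: there the \emph{other} root of the quadratic (the one given by \eqref{eq:formulaforkb2}) satisfies $c\tk_2<0$ and would appear to be a vertical-tangent candidate; it is eliminated precisely because $\w+c\tk_2<0$ there. Symmetrically, for $\w<0$ the root \eqref{eq:formulaforkbp} has $c\tk_1>0$ and must be excluded as a horizontal-tangent candidate by the same positivity check.

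The paper closes this in two lines: it evaluates $Q_\w(-\w)=\w\bigl(\xoo+4\frac{\nu^2\w^2}{c^2}\bigr)$, observes that this has the sign of the leading coefficient of $Q_\w$, and concludes that $-\w$ lies outside the interval bounded by the two roots, whence $\w+c\tk_1(\w)>0\iff\w>0$ (and likewise for $\tk_2$). If you add this verification -- or any equivalent check that $\w+ck>0$ holds at the relevant root exactly when $\w>0$ -- your proof is complete and coincides with the paper's. One further small point: the discriminant of your quadratic is $c^2\bigl[(\xoo+c^2)^2+16\nu^2\w^2\bigr]$, so its square root is $|c|\sqrt{(\xoo+c^2)^2+16\nu^2\w^2}$; the pair of roots still equals $\{\tk_1,\tk_2\}$ as written in \eqref{eq:formulaforkbp}--\eqref{eq:formulaforkb2}, but for $c<0$ the labels attached to the $\pm$ signs swap, which is worth a sentence when you ``attach each root to the correct tangency type''.
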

\begin{proof}
We fix $\w$ and differentiate \eqref{eq:systemxy} in $k$ to obtain
\begin{equation}\label{eq:systemxpyp}
  \begin{pmatrix}
    x & -y\\
    y &  x
  \end{pmatrix}
  \hfill
  \begin{pmatrix}
    \partial_k x\\
    \partial_k y
  \end{pmatrix}
  =
  \hfill
   2\nu
  \begin{pmatrix}
    2\nu k\\
    c
  \end{pmatrix},
\end{equation}
or equivalently
\begin{equation}\label{eq:systemxpypsolved}
  \begin{pmatrix}
   \partial_k x\\
   \partial_k y
  \end{pmatrix}
  =\ds
  \frac{2\nu}{x^2+y^2}
  \begin{pmatrix}
    2\nu k x +cy \\
   -2\nu k y +cx
  \end{pmatrix}.
\end{equation}
We first search vertical tangent lines. From
\eqref{eq:systemxpypsolved}, we see that $\partial_k x=0$ if and only if
\begin{equation}\label{eq:cnstangentehoriz}
     2\nu k x +cy=0.
\end{equation}
Multiplying \eqref{eq:cnstangentehoriz} successively by $x$ and $y$
and substituting $xy$ from \eqref{eq:systemxy2} gives the system
\begin{equation}\label{eq:systemxpypsolved2}
  \begin{array}{lcl}
    x^2&=& -\ds\frac{c}{k}(\w+kc),\\
    y^2&=& -\ds 4\nu^2 \frac{k}{c}(\w+kc).
  \end{array}
\end{equation}
Replacing into the expression \eqref{eq:systemxy1} for $x^2-y^2$ gives
the equation for $kc$ (we keep $kc$ since $kc$ has a sign)
\begin{equation}\label{eq:equationforkb}
  Q_{\w}(kc):= \ 4\frac{\nu^2}{c^2}\w (kc)^2-(c^2+\xoo)(kc)-\w c^2=0.
\end{equation}
The polynomial $Q_{\w}$ has one negative solution $c\tk_1(\w)$, and
one positive solution $c\tk_2(\w)$, given in
(\ref{eq:formulaforkbp},\ref{eq:formulaforkb2}). For $k$ to yield a solution of
\eqref{eq:systemxpypsolved2} in $x >0, y > 0$, we must have $\w+kc >
0$ and $kc < 0$.  We compute $Q_{\w}(-\w )= \w (\xoo+4\frac{\nu^2
  \w^2}{c^2})$, which has the sign of the leading coefficient in
$Q_{\w}$. This proves that $-\w$ is outside the interval defined by the
roots, i.e.
$$
\begin{cases}
-\w < c\tk_1(\w)< 0 < c\tk_2(\w) &\mbox{if }\w >0,\\
 c\tk_1(\w)< 0 < c\tk_2(\w)< -\w &\mbox{if }\w <0.
 \end{cases}
$$
Therefore, $\w+c\tk_1(\w) > 0\iff \w > 0$, and there is a unique point where the
tangent is vertical, and this point is given by $k=\tk_1(\w)$.

We now search for horizontal tangent lines. By
\eqref{eq:systemxpypsolved}, we see that $\partial_k y=0$ if and only
if
\begin{equation}\label{eq:cnstangentevert}
  -2\nu k y +cx=0.
\end{equation}
Proceeding as before when we obtained (\ref{eq:systemxpypsolved2}), we
get the system
\begin{equation}\label{eq:cnstangentevert2}
\begin{array}{lcl}
  x^2&=& \ds 4\nu^2 \frac{k}{c}(\w+kc),\\
  y^2&=& \ds\frac{c}{k}(\w+kc),
\end{array}
\end{equation}
and $kc$, together with $\w+kc$, must be positive, which is the case
if $kc$ is the positive root of $Q_{\w}$, yielding $\tk_2$.
Therefore, there is a unique point where the tangent is horizontal,
which is given by $k=\tk_2(\w)$.

If $\w=0$ and $c \ne 0$,
a direct computation shows that
\[
  \partial_k x =
  \frac{4\nu^2 k(x^2+c^2)}{x(x^2+y^2)}\ > 0,\quad
  \partial_k y =
  \frac{2\nu c(x_0^2+y^2)}{x(x^2+y^2)}\ > 0,
\]
which implies that $\mbox{sign}(\partial_k x)=\mbox{sign}(k)$ and
$\mbox{sign}(\partial_k y)=\mbox{sign}(c)$. Since with $\omega=0$ we
have from (\ref{eq:systemxy2}) that $k$ and $c$ have the same sign,
and hence $\frac{dy}{dx}=\frac{\partial_k y}{\partial_k x}>0$, we obtain
that the curve is monotone.

Suppose now $c=0$, $\w \ne 0$. Using \eqref{eq:systemxpypsolved}, we
obtain directly $\frac{dy}{dx}=\frac{\partial_k y}{\partial_k
  x}=-\frac{y}{x}<0$, and again the curve is monotone.

Finally, if $c=\w=0$, we obtain from (\ref{eq:systemxy2}) that
$y(x)=0$, going from $x=\xo$ to infinity, which is also monotone.
\end{proof}

\begin{corollary}\label{cor:tangentes}
The northern curve $\ccn$ has a horizontal tangent, at
$\tz_2=z(\wmm,\tk_2(\wmm))$, if and only if $|\tk_2(\wmm)| \in
[\km,\kmm]$.

For $\km \le \wm/|c|$, the southern curve $\ccsw$ has a vertical
tangent, at $\tz_1=z(\wm,\tk_1(\wm))$, if and only if $|\tk_1(\wm)| \in
[\km,\wm/|c|]$.
\end{corollary}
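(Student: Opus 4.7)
The plan is to obtain both statements as direct specializations of Theorem \ref{lem:convexitycurve} to the one-parameter families describing $\ccn$ and $\ccsw$; the only work consists of matching the sign of the critical $k$ to the sign conventions built into the parametrizations \eqref{eq:hypnorth}--\eqref{eq:hypsouth}. There is no new analytic content needed beyond Theorem \ref{lem:convexitycurve}; this is essentially a bookkeeping argument.

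For the northern curve, I would fix $\w = \wmm > 0$ on $\ccn = z(\wmm, \soc[\km,\kmm])$. Theorem \ref{lem:convexitycurve} then supplies a unique point of horizontal tangency in the first quadrant, located at $k = \tk_2(\wmm)$. Inspecting \eqref{eq:formulaforkb2}, the bracket $\xoo + c^2 + \sqrt{(\xoo+c^2)^2 + 16\nu^2\w^2}$ is strictly positive, so $\tk_2(\w)$ has the same sign as $c/\w$; for $\w = \wmm > 0$ this gives $\tk_2(\wmm) = \soc\,|\tk_2(\wmm)|$. Hence the critical value belongs to the parameter range $\soc[\km,\kmm]$ of $\ccn$ if and only if $|\tk_2(\wmm)| \in [\km,\kmm]$, which is the first claim.

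For the southern curve, I would fix $\w = \wm > 0$ on $\ccsw = z(\wm, -\soc[\km, \wm/|c|])$, using the hypothesis $\km \le \wm/|c|$ to ensure this interval is nontrivial so that $\ccsw$ is nonempty. By Theorem \ref{lem:convexitycurve}, the unique point of vertical tangency on $k \mapsto z(\wm, k)$ in the first quadrant is $k = \tk_1(\wm)$. From \eqref{eq:formulaforkbp}, the bracket $\xoo + c^2 - \sqrt{(\xoo+c^2)^2 + 16\nu^2\w^2}$ is strictly negative, so $\tk_1(\w)$ carries the opposite sign from $c/\w$, giving $\tk_1(\wm) = -\soc\,|\tk_1(\wm)|$. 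Therefore $\tk_1(\wm)$ lies in the parameter range $-\soc[\km, \wm/|c|]$ of $\ccsw$ if and only if $|\tk_1(\wm)| \in [\km, \wm/|c|]$, yielding the second claim.

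The one point that requires a bit of care, and which I would expect to be the only source of confusion, is the sign inversion between $\tk_1$ and $\tk_2$: because the bracket in \eqref{eq:formulaforkbp} is negative while the one in \eqref{eq:formulaforkb2} is positive, the critical $k$ for the vertical tangent sits on the opposite side of the $y$-axis from the critical $k$ for the horizontal tangent. This is exactly matched by the opposite orientation of the $k$-intervals in \eqref{eq:hypnorth} and \eqref{eq:hypsouth}, which is why only the \emph{absolute value} of $\tk_1, \tk_2$ appears in the final criterion.
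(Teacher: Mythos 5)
Your proposal is correct and follows exactly the route the paper takes: the paper's own proof is the one-liner ``the results follow directly from Theorem~\ref{lem:convexitycurve}'', and your argument simply makes explicit the sign bookkeeping (the bracket in \eqref{eq:formulaforkb2} being positive versus the one in \eqref{eq:formulaforkbp} being negative, matched against the orientations $\soc[\km,\kmm]$ and $-\soc[\km,\wm/|c|]$ of $\ccn$ and $\ccsw$) that the authors left implicit. No issues.
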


\begin{proof}
  The results follow directly from Theorem \ref{lem:convexitycurve}.
\end{proof}

We show in Figure \ref{NewFig} an example where the two points $\tk_1$
and $\tk_2$ are part of $\tilde{D}_+$.
\begin{figure}
  \centering
  \psfrag{z1}{\footnotesize $z_1$}
  \psfrag{z2}{\footnotesize $z_2$}
  \psfrag{z3}{\footnotesize $\hspace{-2.5em}z_3$}
  \psfrag{z4}{\footnotesize $z_4$}
  \psfrag{cw}{\footnotesize $\ccw$}
  \psfrag{cn}{\footnotesize $\ccn$}
  \psfrag{csw}{\footnotesize $\ccsw$}
  \psfrag{ce}{\footnotesize $\cce$}
  \psfrag{zt3}{\footnotesize $\hspace{1.8em}\tz_1$}
  \psfrag{zt4}{\footnotesize $\tz_2$}
  \includegraphics[height=0.35\textwidth]{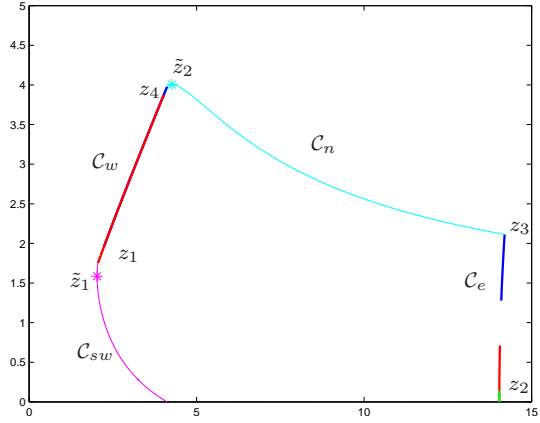}
  \caption{Illustration of the domain $\td_+$ in the $(x,y)$ plane
    with the two special points $\tz_1$ and $\tz_2$ defined in Corollary \ref{cor:tangentes}}
  \label{NewFig}
\end{figure}

Note that for $\wmm$ large, we have from (\ref{eq:formulaforkb2}) that
$$
  \tk_2(\wmm)= \frac{c}{2\nu}(1+\frac{\xoo+c^2}{4\nu\wmm})+\go(\wmm^{-2}).
$$
Therefore a sufficient condition for $\tz_2$ to belong to the northern
curve for $\omega_M$ large is $\km < \frac{|c|}{2\nu}$.

The next lemma gives the asymptotic expansions for the corner points
of $\td_+$, $z_1:=z(\wm,-\soc\km)$ if $\ac\km<\wm$ and $z_1:=z(\wm,-
\wm/c)$ if $\ac\km>\wm$, $z_3:=z(\wmm,\soc\kmm)$, and
$z_4:=z(\wmm,\soc\km)$, and also for other important points on
the boundary of $\td_+$.
\begin{lemma}\label{lem:cornerasymptotic}
The corner points $z_j$ of $\td_+$ have for $\kmm$ and
$\wmm$ large the asymptotic expansions
\begin{equation}\label{eq:asymptz3soc}
\begin{array}{rcl}
    z_1&=&\sqrt{x_0^2+4\nu^2\km^2+4i \nu\,
    \max( \wm-\ac \km,0))},
\\[3mm]
z_3  &\sim&\begin{cases}
    2\nu\kmm + i(\ac+ \frac{\wmm}{\kmm})
    &\mbox{if $\wmm \pr\kmm $,}\\
  2\nu\kmm
\sqrt{1+ i \frac{\wmm}{\nu\kmm^2}}
    &\mbox{if $\wmm\pr\kmm^2 $,}
\end{cases}
\\
  z_4&\sim & \sqrt{2\nu\wmm}(1+ i)
.
\end{array}
\end{equation}
\[
\]
We furthermore have the expansions for the horizontal tangent point
\[
  \tk_2(\wmm) \sim  \frac{c}{2\nu},\quad \tz_2(\wmm)\sim
   \sqrt{2\nu\wmm}(1+ i).
\]
\end{lemma}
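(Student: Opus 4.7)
My plan is to derive every identity from the single exact relation
$z(\w,k)^2 = x_0^2 + 4\nu^2 k^2 + 4i\nu(\w+ck)$, which is obtained by squaring the definition of $z$ in (\ref{eq:rho}), and then to identify the dominant terms in each asymptotic regime. The formula for $z_1$ is in fact exact rather than asymptotic: substituting $\w=\max(\wm,\ac\km)$ and $k=-\soc\km$ (so that $ck=-\ac\km$ and $k^2=\km^2$) immediately gives $z_1^2=x_0^2+4\nu^2\km^2+4i\nu\max(\wm-\ac\km,0)$, and the stated expression follows by taking the principal square root (which is legitimate since the real part of the radicand is strictly positive by assumption (\ref{eq:hypcoefficients})).

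For $z_3=z(\wmm,\soc\kmm)$ the substitution yields $z_3^2=x_0^2+4\nu^2\kmm^2+4i\nu(\wmm+\ac\kmm)$, and the two regimes must be treated separately. When $\wmm\pr\kmm$, the term $4\nu^2\kmm^2$ dominates both $x_0^2$ and $4i\nu(\wmm+\ac\kmm)$, so factoring it out and applying the first-order expansion $\sqrt{1+\eps}\sim 1+\eps/2$ produces the claimed $z_3\sim 2\nu\kmm+i(\ac+\wmm/\kmm)$. When $\wmm\pr\kmm^2$, the imaginary part $4i\nu\wmm$ is now of the same order as $4\nu^2\kmm^2$, while $x_0^2$ and $4i\nu\ac\kmm$ remain subdominant; factoring out $4\nu^2\kmm^2$ but leaving the square root unexpanded yields $z_3\sim 2\nu\kmm\sqrt{1+i\wmm/(\nu\kmm^2)}$.

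For $z_4=z(\wmm,\soc\km)$ the index $\km$ is bounded while $\wmm\to\infty$, so $z_4^2\sim 4i\nu\wmm$; the identity $\sqrt{i}=(1+i)/\sqrt{2}$ then gives $z_4\sim\sqrt{2\nu\wmm}(1+i)$. For $\tk_2(\wmm)$ I would start from the closed form (\ref{eq:formulaforkb2}) and expand $\sqrt{(x_0^2+c^2)^2+16\nu^2\wmm^2}=4\nu\wmm+\go(\wmm^{-1})$, so that $\tk_2(\wmm)\sim c\cdot 4\nu\wmm/(8\nu^2\wmm)=c/(2\nu)$. Substituting this asymptotic value back into $z(\wmm,\cdot)^2$ gives $\tz_2^2\sim x_0^2+c^2+4i\nu(\wmm+c^2/(2\nu))\sim 4i\nu\wmm$, and the same extraction of $\sqrt{i}$ produces $\tz_2(\wmm)\sim\sqrt{2\nu\wmm}(1+i)$. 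The only non-automatic step is cleanly separating the dominant balance for $z_3$ in the two scalings $\wmm\pr\kmm$ and $\wmm\pr\kmm^2$; once the correct terms are isolated, each remaining manipulation is a routine first-order expansion.
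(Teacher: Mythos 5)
Your proposal is correct and follows exactly the route the paper intends, since its own proof consists only of the remark that ``all expansions are obtained by direct calculations''; you have simply carried out those calculations from the exact relation $z^2=x_0^2+4\nu^2k^2+4i\nu(\omega+ck)$, identified the dominant balance correctly in each regime (including the distinction between $\wmm\pr\kmm$ and $\wmm\pr\kmm^2$ for $z_3$), and your expansions for $z_1$, $z_3$, $z_4$, $\tk_2(\wmm)$ and $\tz_2(\wmm)$ all agree with the stated formulas. The only cosmetic remark is that the well-definedness of the square root for $z_1$ rests on the full assumption (\ref{eq:hypcoefficients}) (if $x_0^2+4\nu^2\km^2=0$ the radicand is purely imaginary but still nonzero because $\wm\ne 0$), not solely on the positivity of its real part.
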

\begin{proof}
  All expansions are obtained by direct calculations.
\end{proof}

We now define the south-western point and the northern point as
\begin{equation}\label{eq:defsw}
\begin{array}{rcl}
z_{sw}&=&
\begin{cases}
z_1 &\mbox{if } |c\km| < \wm
\mbox{
 or if } (|c\km| >  \wm \mbox{ and } |\tk_1(\wm)| \not\in [\km,\frac{\wm}{c}]), \\
\tz_1=z(\wm,\tk_1(\wm)) &\mbox{if } |c\km| >  \wm \mbox{ and } |\tk_1(\wm) | \in  [\km,\frac{\wm}{c}].
\end{cases}
\\[3mm]
z_{n}&=&
\begin{cases}
z_4 &\mbox{if } |\tk_2(\wmm)| \not\in [\km,\kmm] ,\\
\tz_2=z(\wmm,\tk_2(\wmm)) &\mbox{if }|\tk_2(\wmm)|  \in [\km,\kmm].
\end{cases}
\end{array}
\end{equation}

\section{Optimization of Robin Transmission Conditions}

This section is devoted to the proofs of Theorems \ref{th:toutno},
\ref{th:touto} and \ref{th:toutobounded}. The existence and uniqueness
of the minimizers are guaranteed by the abstract Theorems
\ref{th:gennonoverlap} and \ref{th:genoverlap}; we therefore focus
in each case on the characterization of a strict local minimum, which
will also provide the asymptotic results.

\subsection{The Nonoverlapping Case}

\paragraph{Proof of Theorem \ref{th:toutno} (Robin Conditions Without
  Overlap):} by Theorem \ref{th:gennonoverlap}, the best approximation
problem \eqref{BestApproxtilde} on $\td$ has a unique solution
$(p^*_0(0),\delta^*_0(0))$, which is the minimum of the real function
$F_0$ in \eqref{BestApproxtildereal}. To characterize this minimum, we
are guided by the geometric interpretation of the min-max problem: we
search for a circle containing $\td_+$, centered on the real positive
half line, and tangent in at least two points.  From numerical
insight, we make the ansatz that $p^*_0(0) \pr\sqrt{2\nu\kmm} $, which
we will validate a posteriori by the uniqueness result from Theorem
\ref{th:gennonoverlap}.

{\bf\em Local Maxima of the Convergence Factor:} We start by analyzing
the variation of $R_0(\w,k,p)=|\rho_0(\w,k,p)|^2$ on the boundary
curves $\cce$ ($k=\km$) and $\ccw$ ($k=\kmm$).
\begin{lemma}\label{lem:variationsw}
  For $\kmm$ large, and $ p \pr\sqrt{2\nu\kmm}$, we have
  \begin{enumerate}
  \item the maximum of $R_0$ on $\cce$
    is attained for $z=z_3$.
  \item the maximum of $R_0$ on $\ccw$
    is attained for $z=z_4$ or $z=z_1$.
\end{enumerate}
\end{lemma}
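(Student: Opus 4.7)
The plan is to parametrize each branch of the boundary curves $\cce$ and $\ccw$ by the frequency $\omega$, with $k$ held fixed at $\pm\kmm$ or $\pm\km$ respectively; along such a branch $z=x+iy$ traces an arc of the hyperbola $x^2-y^2=\xoo+4\nu^2 k^2$. Writing $R_0=N/D$ with $N=(x-p)^2+y^2$ and $D=(x+p)^2+y^2$, and using (\ref{eq:systemxpypsolved}) to read off $\partial_\omega x=2\nu y/(x^2+y^2)$, $\partial_\omega y=2\nu x/(x^2+y^2)$, a short computation should collapse to
\[
  \partial_\omega R_0(z,p) \;=\; \frac{8\nu\,p\,y\,(3x^2-y^2-p^2)}{(x^2+y^2)\,D^2},
\]
so that on the interior of any arc (where $p,y>0$) the sign of $\partial_\omega R_0$ matches the sign of $2x^2 + (\xoo+4\nu^2 k^2) - p^2$. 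Since the latter is strictly increasing in $x$, $R_0$ will have at most one interior critical point on each arc, necessarily a local minimum, and its maximum on any arc will be attained at an endpoint.

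For item~1 I check that on $\cce$ this affine-in-$x^2$ quantity stays strictly positive: with $|k|=\kmm$ one has $x^2\ge \xoo+4\nu^2\kmm^2$, of order $\kmm^2$, while the ansatz $p\pr\sqrt{2\nu\kmm}$ gives $p^2$ of order $\kmm$, so for $\kmm$ large the $p^2$ term is dominated. Hence $R_0$ is strictly increasing in $\omega$ on each arc of $\cce$. All arcs of $\cce$ lie on the same hyperbola, and on that hyperbola $y$ is a monotone function of $\omega+ck$ by (\ref{eq:systemxy3}), so $R_0$ is strictly increasing in $y$. The endpoint of $\cce$ realizing the largest $\omega+ck$ is $z(\wmm,\soc\kmm)=z_3$, since $\wmm+|c|\kmm$ beats both $\wmm-|c|\kmm$ (attained on the green arc, when present) and the red-arc endpoint $c\kmm-\wm$. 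This will settle item~1.

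For item~2, on $\ccw$ the constant $\alpha_w:=\xoo+4\nu^2\km^2$ is bounded while $p^2\to\infty$, so $2x^2+\alpha_w=p^2$ has a unique root, pinning down a unique critical point $y_c=\sqrt{(p^2-3\alpha_w)/2}$ of $R_0$ along the hyperbola, where $R_0$ attains a local minimum. The maximum of $R_0$ on each arc of $\ccw$ is therefore attained at one of its two endpoints, and globally at one of $z_1$, $z(\wm,\soc\km)$, $z_4$, $z(\wmm,-\soc\km)$. I will then do a pairwise comparison: the points $z_4$ and $z(\wmm,-\soc\km)$ both correspond to $\omega=\wmm$, have $y$-values of order $\sqrt{2\nu\wmm}$ and satisfy $y>y_c$ in both regimes $\Delta t\pr h$ and $\Delta t\pr h^2$, so $R_0(z_4)\ge R_0(z(\wmm,-\soc\km))$ because $z_4$ has the larger $y$; analogously $z_1$ and $z(\wm,\soc\km)$ have bounded $y$-values, both below $y_c$ for $\kmm$ large, so $z_1$ (with the smaller $y$) realizes the larger $R_0$. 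The conclusion is that the maximum over $\ccw$ is at $z_1$ or at $z_4$, the winner depending on the configuration.

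The main obstacle is the bookkeeping in item~2: the relative positions of $y_c$ and of the endpoint $y$-values must be verified in each of the configurations of Figure~\ref{fig:squarerootdomainscp} and in the two time-discretization regimes, and one has to argue that even when $\Delta t \pr h$ makes $y(z_4)$ and $y_c$ of comparable order the dominant endpoint is still one of $z_1, z_4$. Item~1 is comparatively easy, because the ansatz on $p$ is too small to create any interior critical point on the $|k|=\kmm$ hyperbola.
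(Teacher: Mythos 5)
Your proposal follows the paper's own route: the same computation of $\partial_\w R_0$ producing the factor $8\nu p y\,(3x^2-y^2-p^2)$, the same observation that on $\cce$ the term $p^2\pr\kmm$ is dominated by $2x^2+\xoo+4\nu^2\kmm^2\pr\kmm^2$ so that $R_0$ increases along the hyperbola up to the endpoint $z_3$ of largest $\w+ck$, and the same reduction on $\ccw$ to a single interior critical point (a minimum), so that the maximum sits at an endpoint of one of the two arcs. The one soft spot is your assertion that $y(z_4)$ and $y(z(\wmm,-\soc\km))$ both exceed $y_c\sim p/\sqrt2$ ``in both regimes'': for $\Delta t=C_h h$ one has $y(z_4)\sim\sqrt{2\nu\wmm}=\sqrt{2\nu\kmm/C_h}$ while $y_c\pr\sqrt{\nu\kmm}$, so this comparison depends on the constants and can go the other way. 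But the ``main obstacle'' you name at the end is in fact not there: since $R_0$ restricted to the hyperbola $x^2-y^2=\xoo+4\nu^2\km^2$ is a unimodal (decreasing then increasing) function of $y\ge 0$, and the $y$-values of the four endpoints of $\ccw$ all lie between $y(z_1)$ (the smallest, corresponding to $\max(\wm-\ac\km,0)$) and $y(z_4)$ (the largest, corresponding to $\wmm+\ac\km$), the maximum over the four endpoints is automatically $\max\bigl(R_0(z_1,p),R_0(z_4,p)\bigr)$ wherever $y_c$ happens to fall; no case-by-case bookkeeping in the configurations of Figure \ref{fig:squarerootdomainscp} or in the two time-discretization regimes is needed for the disjunctive conclusion of the lemma. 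With that remark your argument is complete and is essentially the paper's.
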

\begin{proof}
Computing the partial derivative of $R_0$ with respect to $\w$
using the chain rule, we obtain
\[
  \begin{array}{rcl}
    \partial_\w R_0(\w,k,p)
%
&=& \ds
     8\nu p y \,\frac{3x^2- y^2 -p^2}{|z(z+p)^2|^2},
  \end{array}
\]
which we rewrite, using the definitions of $x$ and $y$ in
\eqref{eq:systemxy3}, as
\begin{equation}\label{Rw}
  \partial_\w R_0(\w,k,p)= \frac{8p\nu y} {|z|^2\,|z+p|^4}\left(
    \sqrt{(\xoo+4\nu^2k^2)^2+16\nu^2 (\w+ck)^2}
    +2(\xoo+4\nu^2k^2)-p^2\right).
\end{equation}
We look now at the two boundary curves separately:
\begin{itemize}
\item $|k|=\kmm$: with the asymptotic assumptions, $p^2\ll
2(\xoo+4\nu^2\kmm^2)$, and the factor on the right is therefore
positive. Since $y$ is non-negative, $\partial_\w R_0(\cdot,\kmm,p)$
does not change sign, and the convergence factor $R_0$ is thus
increasing in $\w$. Its maximum is attained at $z_3$.

\item $|k|=\km$: the right hand side of \eqref{Rw} vanishes if $y=0$,
  which leads to a first root
  \[
    \w_1(k):=-c k,
  \]
  and also if the factor on the right in \eqref{Rw} vanishes, which
  happens if and only if
  \[
    \sqrt{(\xoo+4\nu^2k^2)^2+16\nu^2 (\w+ck)^2}=p^2-2 (\xoo+4\nu^2k^2),
  \]
where the right hand side is positive, since $|k|=\km$ and we have the
asymptotic assumption on $p$. By squaring, this equality is equivalent
to
\[
  16\nu^2 (\w+ck)^2 = (p^2-2(\xoo+4\nu^2k^2))^2-(\xoo+4\nu^2k^2)^2
   = (p^2-3(\xoo+4\nu^2k^2))(p^2-(\xoo+4\nu^2k^2)).
\]
Under the asymptotic assumption on $p$, the right hand side is
positive, and we can therefore obtain two further real roots
\[
  \begin{array}{l}
    \w_2(k):=-\ds c k +\frac{1}{4\nu}\sqrt{(p^2-2\z)(p^2-3\z)}, \\
    \w_3(k):=-\ds c k -\frac{1}{4\nu}\sqrt{(p^2-2\z)(p^2-3\z)}.
 \end{array}
\]
The three values $\w_j(\km)$, $j=1,2,3$, which lead to a vanishing
derivative, can be ordered, $\w_3(\km) < \w_1(\km) < \w_2(\km)$.
Looking at the behavior of the derivative of $R$ in \eqref{Rw} for
$\w$ large, we see that $\w_1(\km)$ must be a maximum, whereas
$\w_2(\km)$ and $\w_3(\km)$ represent minima.  For $k=-\soc\km$,
$\w_1(k)=\ac\km$ belongs to the western curve only if $\wm\le \ac\km$,
see (\ref{eq:hypwest}), and it is precisely on the boundary.  The
maximum of $R_0$ is therefore always attained on the boundary of the
western curve.
\end{itemize}
\end{proof}

We next analyze the variation of $R_0$ on the exterior boundary curves
of $\td_+$ when $\w$ is fixed. We start with the case $\omega=\omega_m$:
\begin{lemma}\label{lem:variationsk}
  For $\km\le \wm/\ac$, and large $p $, the derivative of $k\mapsto
  R(\wm,k,p)$ vanishes at a single point $\tk_3(p) \sim \tk_1(\wm)$,
  yielding a maximum at $\tz_3(p)=z(\wm,\tk_3(p))$, and
  $$
  \sup_{z\in\ccsw}R_0(z,p)=
  \begin{cases}
  R_0(z_1,p)&\mbox{if $|\tk_3(p)| \le \km$},\\
   R_0(\tz_3(p),p)&\mbox{if $|\tk_3(p)| \le \km$}.
  \end{cases}
  $$
\end{lemma}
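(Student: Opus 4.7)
The plan is to compute $\partial_k R_0(\wm,k,p)$ in closed form at fixed $\omega = \wm$, to locate its zeros asymptotically for large $p$, and then to compare the values of $R_0$ at the resulting interior critical point with those at the two endpoints of $\ccsw$. Direct differentiation of $R_0 = ((x-p)^2+y^2)/((x+p)^2+y^2)$ gives $\partial_x R_0 = 4p(x^2-y^2-p^2)/|z+p|^4$ and $\partial_y R_0 = 8pxy/|z+p|^4$. Combining these with the expressions for $\partial_k x$ and $\partial_k y$ already recorded in \eqref{eq:systemxpypsolved}, the chain rule yields
\begin{equation*}
\partial_k R_0(\wm,k,p) \;=\; \frac{16\nu^2\,p}{|z+p|^4\,(x^2+y^2)}\;N(k,p),\qquad N(k,p) \;:=\; 2\nu k\,x\,(x^2-3y^2-p^2) \,+\, c\,y\,(3x^2-y^2-p^2),
\end{equation*}
so the critical points of $k\mapsto R_0(\wm,k,p)$ on $\ccsw$ are exactly the zeros of $N(\cdot,p)$ on the associated $k$-interval.

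Substituting $x^2-y^2=\xoo+4\nu^2 k^2$ from \eqref{eq:systemxy1}, I would rewrite
\begin{equation*}
N(k,p) \;=\; -p^2\bigl(2\nu k\,x + c\,y\bigr) \,+\, N_0(k),
\end{equation*}
where $N_0(k)$ is independent of $p$, so $N(k,p)/p^2 \to -(2\nu k\,x(k) + c\,y(k))$ uniformly on the compact $k$-interval as $p\to\infty$. By Theorem \ref{lem:convexitycurve}, this limit has a unique zero in the interval at $k=\tk_1(\wm)$, and this zero is simple (the vertical tangent there is non-degenerate). The implicit function theorem then produces a unique root $\tk_3(p)$ of $N(\cdot,p)$ in a neighborhood of $\tk_1(\wm)$ with $\tk_3(p)\to\tk_1(\wm)$, and a continuity-of-roots argument based on the uniform convergence rules out further roots on the whole interval for $p$ large. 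A sign analysis of $N$ at the two endpoints---negative at $kc=-\wm$ where $y=0$ and $N=2\nu k\,x\,(x^2-p^2)<0$, and governed by the leading $-(2\nu k x+c y)$ term at $k=-\soc\km$---then shows that $\partial_k R_0$ switches from positive to negative across $\tk_3(p)$, so $\tk_3(p)$ is the unique interior maximum.

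The conclusion then splits into two cases. If $|\tk_3(p)|\le\km$, the critical point lies outside the parameter interval of $\ccsw$, so $R_0(\wm,\cdot,p)$ is monotone there and attains its supremum at the corner $k=-\soc\km$, i.e.\ at $z_1$. Otherwise $\tk_3(p)$ lies in the interior, the point $\tz_3(p)=z(\wm,\tk_3(p))$ is the global maximum, and by the endpoint sign analysis it dominates the other endpoint $kc=-\wm$, yielding $\sup_{z\in\ccsw}R_0(z,p)=R_0(\tz_3(p),p)$.

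The delicate step I expect to be the main obstacle is the \emph{global} uniqueness of the interior critical point (and the monotonicity asserted in the first case of the dichotomy): the local existence and convergence of $\tk_3(p)$ through the implicit function theorem are routine, but a priori $N(\cdot,p)$ might acquire additional zeros elsewhere in the interval as $p\to\infty$. The cleanest resolution is the uniform convergence $N(k,p)/p^2 \to -(2\nu k x+cy)$ on the compact $k$-interval combined with the fact that this limit has a single simple zero there; standard continuity-of-roots then forces any spurious zero of $N(\cdot,p)$ to leave the interval, which together with the endpoint sign analysis finishes the proof.
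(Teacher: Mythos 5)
Your proposal is correct and takes essentially the same route as the paper: the same numerator (your expanded $N(k,p)$ is exactly the paper's $N_{\w}(k)=(x^2-y^2-p^2)(2\nu kx+cy)+2xy(-2\nu ky+cx)$), the same identification of the dominant term $-p^2(2\nu kx+cy)\propto -p^2\partial_k x$ for large $p$, and the same appeal to Theorem \ref{lem:convexitycurve} for the sign change of $\partial_k x$ at $\tk_1(\wm)$, splitting into the two cases according to whether $|\tk_1(\wm)|$ lies in the interval. Your added uniform-convergence/continuity-of-roots argument for global uniqueness of the critical point is a more careful version of the step the paper dispatches with ``$N_{\wm}(k)\sim -p^2\partial_k x$ if $\partial_k x\ne 0$'' plus the intermediate value theorem.
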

\begin{proof}
As in the previous proof, we start by computing the partial derivative
\begin{equation}\label{Nw}
  \begin{array}{rcl}
  \partial_k R(\wm,k,p) &=& \ds 4 p\,
   \frac{(x^2-y^2-p^2)\partial_kx
    +2xy\partial_ky}{|z+p|^4}
    \ =\  8p\nu \ \frac{N_\w(k)}{|z|^2\,|z+p|^4},
     \\[3mm]
    N_\w(k)&=&\ds (x^2-y^2-p^2)(2\nu\,k\,x+cy)+2x\,y (-2\nu\,k\,y +cx).
\end{array}
\end{equation}
For $k$ in $-\soc[\km,\frac{\wm}{\ac}]$, $N_{\wm}(k)\sim -p^2
\partial_kx $ if $\partial_kx \ne 0$. If $|\tk_1(\wm)|\le \km$,
$\partial_k x$ has a constant sign in the interval, and $R_0(\wm,k,p)$
is a decreasing function of $x$, reaching therefore its maximum at
$z_1$. If $|\tk_1(\wm)|> \km$, $\partial_k x$ changes sign in
the interval, and so does $ N_{\wm}(k)$: there is a value
$\tk_3(p)\sim \tk_1(\wm)$ such that $N_{\wm}(\tk_3(p))=0$. At that
point $R_0$ is maximal.
\end{proof}
It finally remains to study the case were $\w=\wmm$.
\begin{lemma}\label{lem:definitionoftk4}
Suppose that $\wmm$ and $\kmm$ are large, with $\wmm \pr \kmm^{\alpha}
$, $\alpha=1$ or $2$, and $p\pr \sqrt{\kmm} $. If $p <
\sqrt{4\nu\wmm}$, $k\mapsto R(\wmm,k,p)$ has a single maximum at
$\tz_4=z(\wmm,\tk_4(\wmm,p),p)$. It is given asymptotically by
\begin{equation}\label{eq:valueofkb0}
   \tk_4(\wmm,p) \sim
   \begin{cases}
     \ds\frac{ c}{2\nu} \ \ds
     \frac{ 4\nu\wmm- p^2}{4\nu\wmm+p^2}
     &\mbox{if $\alpha=1$,}\\
     \ds \frac{c}{2\nu}
     &\mbox{if $\alpha=2$.}
  \end{cases}
\end{equation}
We then have the following two results:
\begin{enumerate}
 \item If $p > \sqrt{4\nu\wmm}$ or if $p < \sqrt{4\nu\wmm}$ and $|\km| \ge |\tk_4(\wmm,p)|$, then
     $$\ds\sup_{z\in\ccn}R_0(z,p)=\max (R_0(z_3,p),R_0(z_4,p)).$$
 \item If $p < \sqrt{4\nu\wmm}$ and $|\km| \le |\tk_4(\wmm,p)|$, then
   $$\ds\sup_{z\in\ccn}R_0(z,p)= \max
   (R_0(z_3,p),R_0(\tz_4(\wmm,p),p)).$$
\end{enumerate}
\end{lemma}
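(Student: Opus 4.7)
\textbf{Proof proposal for Lemma \ref{lem:definitionoftk4}.}
The plan is to mimic the strategy used in the proof of Lemma \ref{lem:variationsk} for the southern curve, now applied to the northern curve $\ccn$ on which $\w$ is fixed equal to $\wmm$. First I would write $\partial_k R_0(\wmm,k,p)$ in the form \eqref{Nw}, that is
\[
  \partial_k R_0(\wmm,k,p) = \frac{8p\nu\, N_{\wmm}(k)}{|z|^2 |z+p|^4},\quad
  N_{\wmm}(k) = (x^2-y^2-p^2)(2\nu k x + cy) + 2xy(-2\nu k y + c x),
\]
and then substitute the explicit values $x^2 - y^2 = \xoo + 4\nu^2 k^2$ and $2xy = 4\nu(\wmm + ck)$ coming from \eqref{eq:systemxy1}--\eqref{eq:systemxy2}. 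Regrouping the coefficients of $x$ and $y$ produces an identity of the form $N_{\wmm}(k) = x\, U(k) + y\, V(k)$ with explicit polynomials $U,V$ in $k$, and using $y$ as a function of $x$ through \eqref{eq:systemxy3} turns $N_{\wmm}(k)=0$ into a single (polynomial) equation in $k$ with coefficients depending on $\wmm$ and $p$.

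Second I would carry out the asymptotic analysis under the assumptions $\wmm\pr\kmm^{\alpha}$ and $p\pr\sqrt{\kmm}$. In both regimes $\alpha=1$ and $\alpha=2$ the parameter $k$ of a critical point stays bounded (it is of order $1$ because $\tk_2(\wmm)\sim c/(2\nu)$ by Lemma \ref{lem:cornerasymptotic}), so in the limit $\kmm\to\infty$ only the leading terms in $\wmm$ and $p^2$ survive. For $\alpha=1$ the balance between $p^2$ and $4\nu\wmm$ is non-trivial and the leading equation produces
\[
  \tk_4(\wmm,p)\sim \frac{c}{2\nu}\cdot\frac{4\nu\wmm-p^2}{4\nu\wmm+p^2},
\]
whereas for $\alpha=2$ the term $4\nu\wmm$ dominates $p^2$ and one recovers the simpler $\tk_4(\wmm,p)\sim c/(2\nu)$, which coincides with the tangent point $\tk_2(\wmm)$. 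The condition $p<\sqrt{4\nu\wmm}$ is exactly what guarantees that the leading equation has a real root with $\tk_4(\wmm,p)$ of the sign of $c$, and a sign analysis of $N_{\wmm}(k)$ for $k$ large (where it is dominated by a negative $-p^2\cdot 2\nu k x$ contribution if $k$ has the sign of $-c$, and by a positive one otherwise) shows that this root is the only critical point on $\ccn$ and corresponds to a maximum.

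Third, I would translate this into the two alternative statements. Since $k\mapsto R_0(\wmm,k,p)$ has a single interior maximum at $\tk_4(\wmm,p)$, the sup on $\ccn=z(\wmm,\soc[\km,\kmm])$ is attained either at the interior extremum $\tz_4(\wmm,p)$, when $|\km|\le |\tk_4(\wmm,p)|\le \kmm$, or at an endpoint $z_3$ or $z_4$ otherwise. When $p>\sqrt{4\nu\wmm}$ the critical equation has no real root in the relevant range, so $R_0(\wmm,\cdot,p)$ is monotone on $\ccn$ and the sup is attained at $z_3$ or $z_4$. Taking the max of the endpoint values in the first case and the max of $R_0(z_3,p)$ with $R_0(\tz_4(\wmm,p),p)$ in the second case yields the two formulas in the statement.

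The main obstacle I expect is the sign-of-$N_{\wmm}$ analysis needed to rule out extra critical points: since $N_{\wmm}$ is genuinely a polynomial of degree $4$ in $k$ once $y$ is eliminated via \eqref{eq:systemxy3}, one must use the sign of the leading coefficient (which depends on $\mathrm{sign}(p^2-4\nu\wmm)$) together with the asymptotic size of $p$ and $\wmm$ to control the spurious roots. Handling the two regimes $\alpha=1$ and $\alpha=2$ uniformly, and identifying which root to keep as $\tk_4$, is the delicate step; once this is done, the asymptotic formulas in \eqref{eq:valueofkb0} follow by direct expansion of the surviving root.
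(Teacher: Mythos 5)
Your starting point is the same as the paper's: the derivative formula \eqref{Nw} and an asymptotic, scale-by-scale study of the sign of $N_{\wmm}(k)$ on $\ccn$. However, there is a genuine gap in your treatment of large $k$, and it is not a technicality: it is exactly the point that forces $R_0(z_3,p)$ to appear in the two formulas of the lemma.

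You claim that every critical point of $k\mapsto R_0(\wmm,k,p)$ on $\ccn$ stays bounded, and that for $k$ large $N_{\wmm}(k)$ is dominated by the contribution $-p^2\cdot 2\nu k x$, so that the root near $c/(2\nu)$ is the \emph{only} critical point. This is false in the regime of the lemma. The term $-p^2(2\nu k x+cy)$ dominates only as long as $p^2\gg x^2-y^2=\xoo+4\nu^2k^2$, i.e.\ only for $|k|\ll \sqrt{\kmm}$ since $p\pr\sqrt{\kmm}$. For $k\pr\kmm^{\theta}$ with $\theta>\tfrac12$ one has $x^2-y^2\gg p^2$ and the dominant contribution is $(x^2-y^2)(2\nu k x)\sim(2\nu k)^4\soc$, which is \emph{positive} in the direction of $c$ (recall $k$ has the sign of $c$ on $\ccn$). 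Hence $R_0(\wmm,\cdot,p)$ increases again as $k\to\soc\kmm$, and there is necessarily an additional interior critical point, a \emph{minimum}, located at $k\pr\kmm^{1/2}$ when $\wmm\pr\kmm$ (the paper finds $k_0'=\frac{\soc}{2\sqrt3\nu}\sqrt{2p^2+\sqrt{p^4+3(4\nu\wmm)^2}}$) and at $k\pr\kmm$ when $\wmm\pr\kmm^2$. The lemma asserts a single \emph{maximum}, not a single critical point, and the paper's proof spends most of its effort verifying that all the extra roots of $N_{\wmm}$ are minima. With your monotonicity picture (decrease after the unique maximum all the way to $k=\kmm$) the value $R_0(z_3,p)$ would never compete and the stated conclusion $\sup_{\ccn}R_0=\max(R_0(z_3,p),R_0(\tz_4,p))$ could not be derived; your step 3 silently reinstates $R_0(z_3,p)$ without a shape of the function that justifies it. The same issue affects your claim that for $p>\sqrt{4\nu\wmm}$ the function is monotone on $\ccn$: it is not (it still has the interior minimum), although the conclusion that the sup is attained at an endpoint happens to survive.

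A second, smaller point: reducing $N_{\wmm}(k)=0$ to a single polynomial equation requires eliminating both $x$ and $y$, each of which contains $\sqrt{(\xoo+4\nu^2k^2)^2+16\nu^2(\wmm+ck)^2}$; two squarings are needed and the resulting polynomial has degree well above $4$, with spurious roots to discard. The paper avoids this entirely by analyzing the sign of $N_{\wmm}$ separately on each asymptotic scale $k\pr\kmm^{\theta}$, which is the cleaner route and the one you would need to adopt to close the gap above.
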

\begin{proof}
We study the variations of $N_{\wmm}$ defined in \eqref{Nw}, for $\soc
k\in[\km,\kmm]$. Since we are on $\ccn$, $k$ has the sign of $c$, see
(\ref{eq:hypnorth}), which implies that $\partial_k x $ has the sign
of $c$, as seen from (\ref{eq:systemxpypsolved}).  We now study
separately the two cases $\wmm\pr \kmm $ and $\wmm\pr \kmm^2 $:
\begin{description}
\item[$\bullet$]Case $\wmm\pr \kmm $: we need to study the three cases
  $k\pr\kmm^\alpha$ for $\alpha<\frac{1}{2}$, $\alpha=\frac{1}{2}$ and
  $\frac{1}{2}<\alpha<1$:
  \begin{description}
    \item[$\checkmark$ $k\pr \kmm^{\alpha} ,\ \alpha <\frac12$:] we
      obtain from \eqref{eq:systemxy3} that $x\sim y\sim
      \sqrt{2\nu\wmm}$, and \eqref{eq:systemxy1} shows that
      $x^2-y^2\sim \xoo+4\nu^2k^2\ll p^{2} $, which gives
      \[
        \begin{split}
          N_{\wmm}(k)&\sim \sqrt{2\nu\wmm}(-p^2(2\nu k + c )
          +4\nu\wmm(-2\nu k +c))\\
       & \sim \sqrt{2\nu\wmm}(-2\nu k(p^2 +4\nu\wmm)- c(p^2-4\nu\wmm) ).
        \end{split}
      \]
      Since $k$ has the same sign as $c$, this last quantity has the
      sign of $-c$ if $p> \sqrt{4\nu\wmm}$. $|\rho|$ is therefore a
      decreasing function of $x$. If $p < \sqrt{4\nu\wmm}$, the right
      hand side vanishes for
      $$
         k_0=\frac{c}{2\nu}\frac{4\nu\wmm-p^2}{4\nu\wmm+p^2}=\go(1).
      $$
      Therefore it has the sign of $c$ if $|k|\le |k_0|$, and the
      opposite sign otherwise. By the intermediate values theorem, $
      N_{\wmm}$ vanishes for $\tk_4\sim k_0$, where a local maximum
      occurs.
    \item[$\checkmark$ $ k\pr \kmm^\frac12$:] in this case,
       \[
          N_{\wmm}(k)\sim 2\nu k\wmm x(x^2-3y^2-p^2 )=
          2\nu k\wmm x
          (2(2\nu k)^2-\sqrt{(2\nu k)^4+(4\nu\wmm)^2} -p^2 ).
      \]
      The right hand side vanishes for
      $$
        k_0'=\frac{\soc}{2\sqrt{3}\nu}
          \sqrt{2p^2+\sqrt{p^4+3(4\nu\wmm)^2}} \pr \kmm^\frac12,
      $$
      and changes sign. Therefore, $N_{\wmm}$ vanishes for $\tk_4'\sim
      k'_0$, where a local minimum occurs.
    \item[$\checkmark$ $k\pr\kmm^\alpha $, $\frac12 < \alpha \le 1$:]
      In this case we see from \eqref{eq:systemxy1} that $x^2-y^2 \gg
      p^2$,
      $$
      z\sim\sqrt{4\nu^2k^2+4i\nu \wmm}
      \sim 2\nu |k| +i   \frac{\wmm}{|k|},
      $$
      and the leading order term in $N_{\wmm}$ is
      \[
         N_{\wmm}(k) \sim 4\nu^2k^2 (2\nu kx)
        +4\nu \wmm (-2\nu \wmm +2\nu c k)\sim (2\nu k)^4\soc.
      \]
    \end{description}
    In conclusion, if $ p^2\ge 4\nu\wmm$, $|\rho|$ has a single
    extremum, which is a minimum, and $\sup_{k\in\soc[\km,\kmm]}
    R_0(\wmm,k,p)=\max(R_0(\wmm,\soc\kmm,p),R_0(\wm,\soc\kmm,p))$.  If
    $ p^2\le 4\nu\wmm$, there is a maximum at $\tk_4\sim
    \frac{c}{2\nu}\frac{4\nu\wmm-p^2}{4\nu\wmm+p^2}$. If it is inside
    the segment, then $\sup_{k\in\soc[\km,\kmm]}
    R_0(\wmm,\cdot,p)=\max(R_0(\wmm,\soc\kmm,p),R_0(\wmm,\tk_4,p))$.

\item[$\bullet$]Case $\wmm\pr\kmm^{2} $: we study the cases
  $k\pr k_M^\alpha$ for $\alpha=0$, $0<\alpha<1$ and $\alpha=1$ separately:
  \begin{description}
    \item[$\checkmark$ $k\pr 1 $:] we have $x\sim y\sim
      \sqrt{2\nu\wmm}$, and in $N_{\wmm}$ the dominant term is
      $2xy(-2\nu k y +c x)$, which vanishes at $\tk_2(\wmm)$, from
      which we conclude that for $| k |< |\tk_2(\wmm)|$, $\soc
      N_{\wmm}(k)$ is positive, and negative for $| k
      |>|\tk_2(\wmm)|$. Therefore a local maximum is reached in the
      neighbourhood of $\tk_2(\wmm)$.
    \item[$\checkmark$ $k\pr\kmm^{\alpha} $, $0< \alpha < 1$:] we have
      again $x\sim y\sim \sqrt{2\nu\wmm}$, and the dominant term in
      $N_{\wmm}$ is $2x y (-2\nu k y)$, and
      \[
          N_{\wmm} \sim -8\nu \wmm k x  .
       \]
    \item[$\checkmark$ $k\pr\kmm $:] we have now $x\sim 2\nu|k|$,
      $y\sim \frac{\wmm}{|k|}\pr\kmm$, and the dominant term in
      $N_{\wmm}$ is
      \[
       N_{\wmm} \sim2\nu \frac{x}{k}(x^2-3y^2)  \sim2\nu k x(4\nu^2k^4-3\wmm^2) .
      \]
      Hence $\soc N_{\wmm}$ is negative for small $k$, and becomes positive for
      $k > \sqrt{\frac{\sqrt3}{2\nu}\wmm}$. $R_0(\omega_M,\cdot,p)$
      therefore reaches a minimum in the neighborhood of
      $\sqrt{\frac{\sqrt3}{2\nu}\wmm}$.
  \end{description}
  In conclusion, there is a maximum at $\tk_4\sim
  \tk_2(\wmm)\sim\frac{c}{2\nu}$. If this value is inside the segment,
  then $\sup_{k\in\soc[\km,\kmm]} R_0(\wmm,\cdot,p)=
  \max(R_0(\wmm,\soc\kmm,p),R_0(\wmm,\tk_4,p))$.  Otherwise
  $\sup_{\soc[\km,\kmm]}
  R_0(\wmm,\cdot,p)=\max(R_0(\wmm,\soc\km,p),R_0(\wmm,\soc\kmm,p))$.
\end{description}
The conclusion of the Lemma now follows directly from the conclusion of
the two cases.
\end{proof}
From the above analysis, we see that there are three local maxima of
$R_0(\w,k,p)$:
\begin{equation}\label{eq:pointsencompet}
\begin{array}{ll}
  \mbox{southwest}&\tz_{sw}=
  \begin{cases}
z_1 &\mbox{if } |c\km| < \wm,\\
z_1 &\mbox{if } |c\km| >  \wm \mbox{ and } |\tk_3(p)| \not\in [\km,\frac{\wm}{\ac}], \\
\tz_3(p) &\mbox{if } |c\km| >  \wm \mbox{ and } |\tk_3(p)| \in [\km,\frac{\wm}{\ac}],
\end{cases}
\\
\mbox{northwest}&
\tz_n=
\begin{cases}
z_4 &\mbox{if } p > \sqrt{4\nu \wmm},\\
z_4 &\mbox{if }  p < \sqrt{4\nu \wmm}\mbox{ and } |\tk_4(\wmm,p)| \not\in [\km,\kmm] ,\\
\tz_4(\wmm,p) &\mbox{if } p < \sqrt{4\nu \wmm} \mbox{ and } |\tk_4(\wmm,p)| \in [\km,\kmm],
\end{cases}
\\
  \mbox{northeast}&z_3,\\
\end{array}
\end{equation}
where $\tz_3$ comes from Lemma \ref{lem:variationsk} and $\tz_4$ comes
from Lemma \ref{lem:definitionoftk4}.

We investigate now the asymptotic behavior of the convergence factor
for large $\kmm$, in order to see which of the candidates of local
maxima $\tz_{sw}$, $\tz_n$ and $z_3$ will be important. Since
$\tz_{sw} \pr 1$, for $p\pr\sqrt{\kmm} $, the convergence factor at
$\tz_{sw}$ behaves asymptotically like
\[
  \rho_0(\tz_{sw},p)=\frac{\tz_{sw}-p}{\tz_{sw}+p}\sim -1 +2\frac{\tz_{sw}}{p},\quad
    |\rho(\tz_{sw},p)|\sim 1 -2\frac{x_{sw}}{p}.
\]
For $\tz_n$, we have $k\pr 1$ and $\w=\wmm$. Therefore $\tz_n\sim
\sqrt{2\nu\wmm}(1+i)$ and the convergence factor at $\tz_n$ behaves
asymptotically like
\[
  \rho_0(\tz_n,p)\sim\frac{1+i-\frac{p}{\sqrt{2\nu\wmm}}}
                        {1+i+\frac{p}{\sqrt{2\nu\wmm}}}.
\]
We thus need to distinguish two cases for $\rho_0(\tz_n,p)$:
\begin{enumerate}
  \item If $\wmm \pr \kmm $, $|\rho(\tz_n,p)|$ is asymptotically a
    constant smaller than 1, which shows that the modulus is smaller
    than $1$ independently of $\wmm$, and thus also independent of
    $\kmm$. Therefore, for $\kmm$ large enough, the convergence factor
    at $\tz_n$ is smaller than the convergence factor at $\tz_{sw}$,
    where it tends to $1$, and we do not need to take it into account
    in the min-max problem.

  \item If $\wmm \pr \kmm^2 $, then
    $\frac{p}{\sqrt{2\nu\wmm}}=\po(1)$, and the convergence factor at
    $\tz_n$ is asymptotically
    \[
      |\rho_0(\tz_n,p)|\sim 1-\frac{p}{\sqrt{2\nu\wmm}},
    \]
    which means it could be important in the min-max problem.
\end{enumerate}
We finally study the convergence factor at the last point $z_3$, and
again have to distinguish two cases:
\begin{enumerate}
  \item If $\wmm \pr \kmm $, $z_3\sim
    2\nu\kmm+i\frac{\wmm+\ac\kmm}{\kmm}$ and the convergence factor at
    $z_3$ behaves asymptotically like
    \[
      |\rho_0(z_3,p)|\sim\frac{x_3-p}{x_3+p}\sim 1- \frac{p}{\nu\kmm},
    \]
    which means it needs to be taken into account.
  \item If $\wmm =\frac{\nu \kmm^2}{d}$ then $z_3\sim
    2\nu\kmm\sqrt{1+\frac{i}{d}}$ and the convergence factor behaves
    asymptotically like
    \[
      |\rho_0(z_3,p)|\sim 1-\sqrt{\frac{d(d+\sqrt{1+d^2})}{2(1+d^2)}}\frac{p}{\nu\kmm},
    \]
    again possibly important for the min-max problem.
\end{enumerate}

{\bf\em Determination of the Global Minimizer by Equioscillation:} We now
compare the various points where the convergence factor can attain a
maximum, in order to minimize the overall convergence factor by an
equilibration process. We need to consider again the two basic cases
of an implicit or explicit time integration scheme:
\begin{enumerate}
  \item If $\wmm \pr\kmm $, for large $\wmm$, large $\kmm$ and
    $p\pr\sqrt{\kmm}$, the maximum of $|\rho_0|$ is reached at either
    $\tz_{sw}$ or $z_3$. We therefore consider the difference
    $|\rho_0(\tz_{sw},p)|-|\rho_0(z_3,p)|$, which is asymptotically
    equal to $2(\frac{p}{2\nu\kmm} -\frac{x_{sw}}{p})$. Depending on
    the relative values of $\frac{p^2}{2\nu\kmm}$ and $x_{sw}$, this
    difference can be positive or negative. Therefore, as a function
    of $p$, we can make it vanishes in the region $p \pr \sqrt{\kmm}$.
  \item If $\wmm =\frac{\nu \kmm^2}{d}$, then the point
    $\tz_n$ comes into play: we compute asymptotically the difference
\[
  |\rho_0(z_3,p)|-|\rho_0(\tz_n,p)|\sim \frac{p}{\nu\kmm}\sqrt{\frac{d}{2}} \left(1-\sqrt{\frac{d+\sqrt{1+d^2}}{1+d^2}}\right).
\]
The sign of this quantity is governed by the value of $d$ with
respect to $d_0$:
$$
\begin{cases}
\mbox{If $d > d_0$},|\rho_0(z_3,p)| > |\rho_0(\tz_n,p)|,\\
\mbox{If $d <d_0$},|\rho_0(z_3,p)| < |\rho_0(\tz_n,p)|.
\end{cases}
$$
Hence there is again a value of $p$ such that $|\rho_0
(\tz_{sw},p)|=\max(|\rho_0(z_3,p)|,|\rho_0(\tz_n,p)|)$.
\end{enumerate}
In order to obtain an explicit formula to equilibrate the convergence
factor at two maxima, we get after a short calculation that $|\rho_0|$
equioscillates at the generic points $Z_1$ and $Z_2$ (\textit{i.e.}
$|\rho_0(Z_1,p)|=|\rho_0(Z_2,p)|$) if and only if
$$
p=\sqrt{\frac{\Re Z_1|Z_2|^2-\Re Z_2|Z_1|^2}{\Re Z_2-\Re Z_1}}.
$$
Therefore we can define a unique $\bar{p}_0^*$ for both asymptotic
regimes by the equioscillation equations
\begin{equation}\label{eq:definitionpbar}
\begin{cases}
\wmm\pr\kmm &|\rho_0 (\tz_{sw},\bar{p}^*_0)=|\rho_0 (z_3,\bar{p}^*_0)|,\\
\wmm=\frac{\nu \kmm^2}{d} &
\begin{cases}
d>d_0 &|\rho_0(\tz_{sw},\bar{p}^*_0)|=|\rho_0(z_3,\bar{p}^*_0)|,\\
d<d_0 &|\rho_0(\tz_{sw},\bar{p}^*_0)|=|\rho_0(\tz_n,\bar{p}^*_0)|.\\
\end{cases}
\end{cases}
\end{equation}
In the first two cases, we get $
\bar{p}^*_0=\sqrt{\frac{\tx_{sw}|z_3|^2-x_3|\tz_{sw}|^2}{x_3-\tx_{sw}}}
$
and in the third case we obtain $
\bar{p}^*_0=\sqrt{\frac{\tx_{sw}|z_N|^2-x_N|\tz_{sw}|^2}{x_N-\tx_{sw}}}.
$
Since $\tz_{sw}$ is bounded, we obtain the asymptotic results
$$
\begin{cases}
\wmm\pr \kmm  &\bar{p}_0^*\sim \sqrt{\frac{x_{sw}|z_3|^2}{x_3}},\\
\wmm=\frac{\nu \kmm^2}{d} &
\begin{cases}
d>d_0 &\bar{p}_0^*\sim \sqrt{\frac{s\tx_{sw}|z_3|^2}{x_3}},\\
d<d_0 &\bar{p}_0^*\sim \sqrt{\frac{\tx_{sw}|\tz_n|^2}{\tx_n}},
\end{cases}
\end{cases}
$$
which imply
\begin{equation}\label{eq:formulasp}
\begin{cases}
\wmm\pr \kmm  &\bar{p}_0^*\sim \sqrt{2\nu\kmm x_{sw}},\\
\wmm=\frac{\nu \kmm^2}{d}  &
\begin{cases}
d>d_0  &\bar{p}_0^*\sim \sqrt{2\nu\kmm x_{sw} \sqrt{\frac{2(1+d^2)}{d(d+\sqrt{1+d^2})}} },\\
d<d_0 &\bar{p}_0^*\sim \sqrt{2\nu\kmm x_{sw}\sqrt{\frac{2}{d}}}.\\
\end{cases}
\end{cases}
\end{equation}
We now need to prove that the values of the Robin parameter
$\bar{p}_0^*$ we obtained by equioscillation are indeed local minima:
\begin{lemma}
For $\delta p$ sufficiently small and $p=\bar{p}_0^*+\delta p$
$$
F_0(p)-F_0(\bar{p}_0^*)=
\max(\delta p
\partial_p|\rho_0(\tz_{sw}(\bar{p}_0^*),\bar{p}_0^*)|,
\delta p
\partial_p|\rho_0(\tz_n(\wmm,\bar{p}_0^*),\bar{p}_0^*))|+
\po(\delta p).
$$

\end{lemma}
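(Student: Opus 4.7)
The approach is a standard envelope/Danskin argument applied to the reduced list of candidate local maxima already identified in \eqref{eq:pointsencompet}. By Lemmas \ref{lem:variationsw}, \ref{lem:variationsk} and \ref{lem:definitionoftk4}, the supremum defining $F_0(p)=\sup_{z\in\td_+}|\rho_0(z,p)|$ is attained at one of finitely many points: the southwest candidate $\tz_{sw}(p)$, the northern candidate $\tz_n(\wmm,p)$, and the fixed corner $z_3$. Each of these is either a corner point of $\td_+$ that does not move with $p$, or an interior critical point of $R_0(\cdot,p)$ along a boundary curve of $\td_+$, defined by a non-degenerate vanishing of $\partial_k R_0$ (see the proofs of Lemmas \ref{lem:variationsk} and \ref{lem:definitionoftk4}). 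The implicit function theorem therefore yields a $C^1$ dependence of each moving candidate on $p$ in a neighborhood of $\bar{p}_0^*$.

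Set $M_i(p):=|\rho_0(\tz_i(p),p)|$ for $i\in\{sw,n\}$. By the equioscillation equations \eqref{eq:definitionpbar} defining $\bar{p}_0^*$ and by the asymptotic comparison of the three candidate maxima carried out in \eqref{eq:formulasp} and the paragraphs that precede it, at $p=\bar{p}_0^*$ two of the candidates attain the common value $F_0(\bar{p}_0^*)$, while the remaining candidate is separated from this value by a gap of leading order in $\kmm$ and $\wmm$. Shrinking the neighborhood of $\bar{p}_0^*$ if needed, this gap persists by continuity and so $F_0(p)=\max(M_{sw}(p),M_n(p))$ for all $p$ close enough to $\bar{p}_0^*$.

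Now I invoke the envelope theorem. At any interior critical point $\tz_i(p)$ one has $\partial_k R_0(\tz_i(p),p)=0$, so the total derivative in $p$ of $M_i$ reduces to the partial derivative with $z$ held fixed, that is $M_i'(\bar{p}_0^*)=\partial_p|\rho_0(\tz_i(\bar{p}_0^*),\bar{p}_0^*)|$. For corner candidates this identity is immediate since the point does not depend on $p$. A first-order Taylor expansion of each $M_i$ at $\bar{p}_0^*$,
$$
M_i(\bar{p}_0^*+\delta p)-F_0(\bar{p}_0^*)=\delta p\,\partial_p|\rho_0(\tz_i(\bar{p}_0^*),\bar{p}_0^*)|+\po(\delta p),
$$
combined with the identity $F_0=\max(M_{sw},M_n)$ on the neighborhood and the elementary fact $\max(a+o(1),b+o(1))=\max(a,b)+o(1)$, produces the claimed formula.

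The main obstacle is justifying the uniform dominance step: one must ensure that the inactive candidate stays strictly below the two active ones throughout a full neighborhood of $\bar{p}_0^*$, not merely at the point itself, so that $F_0$ genuinely coincides with $\max(M_{sw},M_n)$ there. This uses the quantitative asymptotic separations in $\kmm$ and $\wmm$ established in the preceding subsection, which exhibit the gap as a non-vanishing leading-order term in these parameters; once in hand, everything else reduces to the standard reduction of a sup of smooth functions to the maximum of their linearizations.
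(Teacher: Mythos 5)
Your proposal is correct and follows essentially the same route as the paper: the paper likewise reduces $F_0$ by continuity to the maximum over the two active candidates, and uses a first-order Taylor expansion in which the term involving the motion of the critical point drops out because $\partial_k|\rho_0(\tz_i(\bar{p}_0^*),\bar{p}_0^*)|=0$ (your envelope-theorem step), before taking the max of the two linearizations. Your treatment is somewhat more explicit than the paper's about the implicit-function-theorem regularity of the moving maxima and about why the inactive candidate stays strictly dominated on a whole neighborhood, but these are refinements of the same argument rather than a different one.
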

\begin{proof}
Consider for example the last case in \eqref{eq:definitionpbar}, when $\tz_{sw}=\tz_3(p)$ and $\tz_n=\tz_4(\wmm,p)$. By continuity,
$$
F_0(p)=\max(|\rho_0(\tz_3(p),p)|,
|\rho_0(\tz_4(\wmm,p),p)|).
$$
By the Taylor formula,
$$
\begin{array}{rcl}
|\rho_0(\tz_3(p),p)|&=&
|\rho_0(\tz_3(\bar{p}_0^*),\bar{p}_0^*)|
+ \delta p
(
\partial_p\tz_3(\bar{p}_0^*)
\partial_k|\rho_0(\tz_3(\bar{p}_0^*),\bar{p}_0^*))|
+
\partial_p|\rho_0(\tz_3(\bar{p}_0^*),\bar{p}_0^*))|
+\po(\delta p)\\
&=&
|\rho_0(\tz_3(\bar{p}_0^*),\bar{p}_0^*)|
+ \delta p
\partial_p|\rho_0(\tz_3(\bar{p}_0^*),\bar{p}_0^*)|
+\po(\delta p),
\end{array}
$$
since $\partial_k|\rho_0(\tz_3(\bar{p}_0^*),\bar{p}_0^*))|=0$. In the same way,
$$
|\rho_0(\tz_4(\wmm,p),p)|=
|\rho_0(\tz_4(\wmm,\bar{p}_0^*),\bar{p}_0^*)|
+ \delta p
\partial_p|\rho_0(\tz_4(\wmm,\bar{p}_0^*),\bar{p}_0^*)|
+\po(\delta p).
$$
Therefore
$$
F_0(p)-F_0(\bar{p}_0^*)=
\max(\delta p
\partial_p|\rho_0(\tz_3(\bar{p}_0^*),\bar{p}_0^*)|,
\delta p
\partial_p|\rho_0(\tz_4(\wmm,\bar{p}_0^*),\bar{p}_0^*))|+
\po(\delta p),
$$
which gives the lemma in this particular case. For the
case where the extremum is reached at a corner of the domain, the
argument is even simpler, since then no derivative in $k$ occurs.
\end{proof}

The derivative of $R_0$ in $p$ is given by
$$
\partial_p R_0(z,p)=\frac{-4x(|z|^2-p^2)}{|z+p|^4}.
$$
For $p=\bar{p}_0^*$, $z=\tz_{sw}$ , the numerator is equivalent to
$4xp^2$, whereas for $z=\tz_n$, it is equivalent to $-4x
|z|^2$. Therefore
$\partial_p|\rho_0(\tz_{sw}(\bar{p}_0^*),\bar{p}_0^*)|\times
\partial_p|\rho_0(\tz_{sw}(\wmm,\bar{p}_0^*),\bar{p}_0^*))| < 0$, and
$F_0(p)-F_0(\bar{p}_0^*) < 0$: $\bar{p}_0^*$ is a strict local minimizer
of $F_0$.

By Theorem \ref{th:gennonoverlap}, $\bar{p}_0^*$ is the global
minimizer, and therefore coincides with $ {p}_0^*(0)$.  In order to
conclude the proof of Theorem \ref{th:toutno}, we can replace in
\eqref{eq:formulasp} the term $x_{sw}$ by the notation $A/4$ from the
theorem, to obtain
$$
  \delta_0^*(L)=\left|\frac{\tz_{sw}-p}{\tz_{sw}+p}\right| \sim
    1-2\frac{x_{sw}}{p}=1-\frac{A}{2p}.
$$
The proof of Theorem  \ref{th:toutno} is now complete.

\subsection{The Overlapping Case} \label{sec:overlap}

We address now the two overlapping cases, and prove Theorem
\ref{th:touto} for the continous algorithm, and Theorem
\ref{th:toutobounded} for the discretized algorithm. By Theorem
\ref{th:genoverlap}, we know already that there is a unique minimizer
in both cases, which we now again characterize by equioscillation.

\paragraph{Proof of Theorem \ref{th:touto} (Robin Conditions with Overlap,
  Continuous):} we denote the unique minimizer of $F_L$ by
$p^*_{0,\infty}(L)$. As in the non-overlapping case, the maximum over
the whole domain is reached on the boundary $\cc=\ccwi\cup\ccsw$ of
$\td_+^\infty$, which is represented in Figure
\ref{fig:squarerootdomainsunbounded} for the three possible
configurations of the boundary.
\begin{figure}
\centering
\subfloat[$\km \le |\tk_1(\wm)|$]{
  \psfrag{z1}{\footnotesize $z_1$}
  \psfrag{z4cas1}{\footnotesize $z_4$ case 1}
  \psfrag{z4cas2}{\footnotesize $z_4$ case 2}
  \psfrag{tzp2}{\footnotesize $\tz'_2$}
  \psfrag{ztp3}{\footnotesize $\tz'_3$}
  \psfrag{zt3}{\footnotesize $\tz_3$}
  \psfrag{cw}{\footnotesize $\ccw$}
  \psfrag{csw}{\footnotesize $\ccsw$}
  \includegraphics[height=0.25\textwidth]{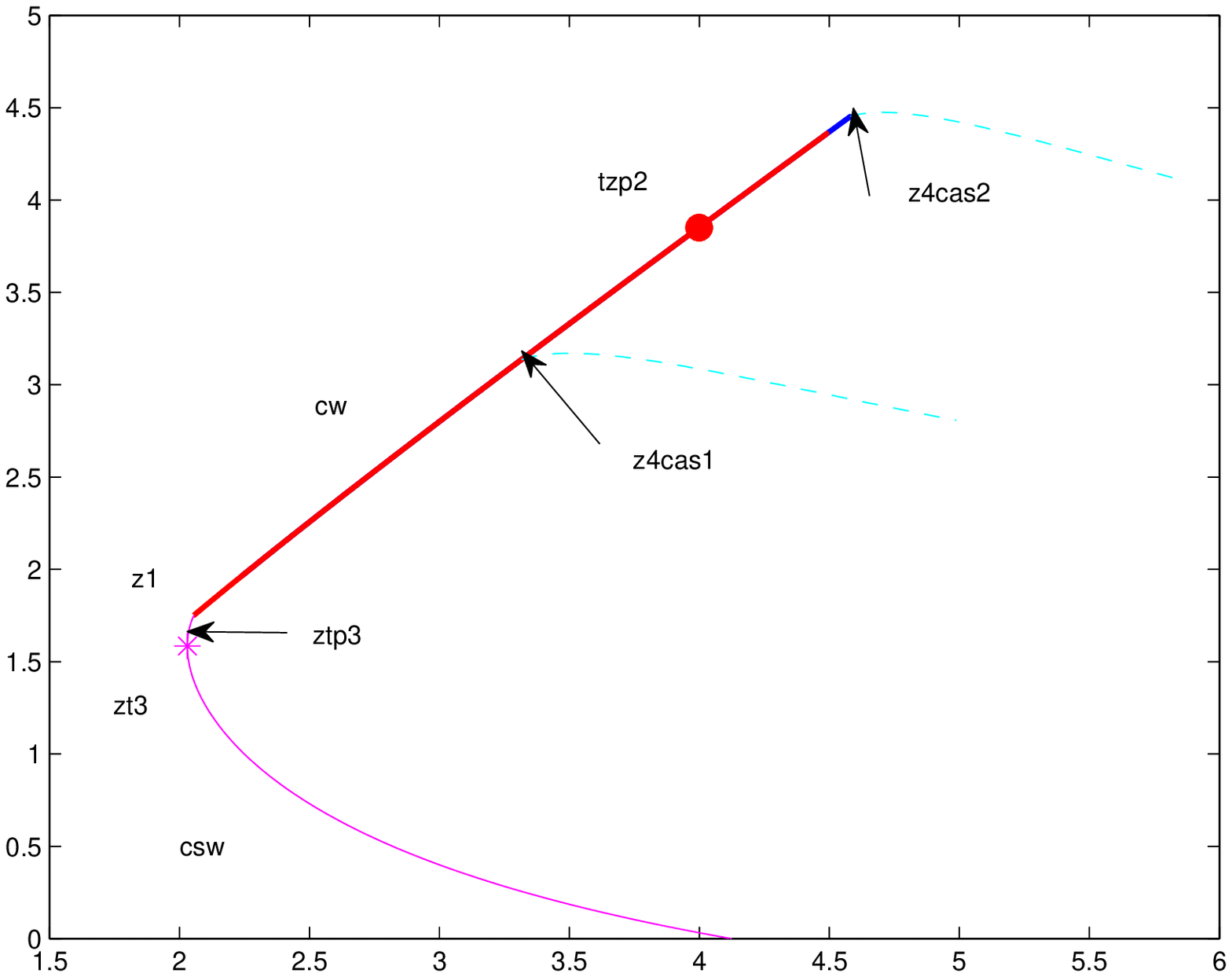}
  \label{fig:squarerootdomainsunbounded1}}
  \subfloat[$|\tk_1(\wm)| \le \km \le \frac{\wm}{\ac}$]{
  \psfrag{z1}{\footnotesize $z_1$}
  \psfrag{z4cas1}{\footnotesize $\ \ z_4$ case 1}
  \psfrag{z4cas2}{\footnotesize $\ \ z_4$ case 2}
  \psfrag{tzp2}{\footnotesize $\tz'_2$}
  \psfrag{ztp3}{\footnotesize $\tz'_3$}
  \psfrag{zt3}{\footnotesize $\tz_3$}
  \psfrag{cw}{\footnotesize $\!\!\ccw$}
  \psfrag{csw}{\footnotesize $\ccsw$}
  \includegraphics[height=0.25\textwidth]{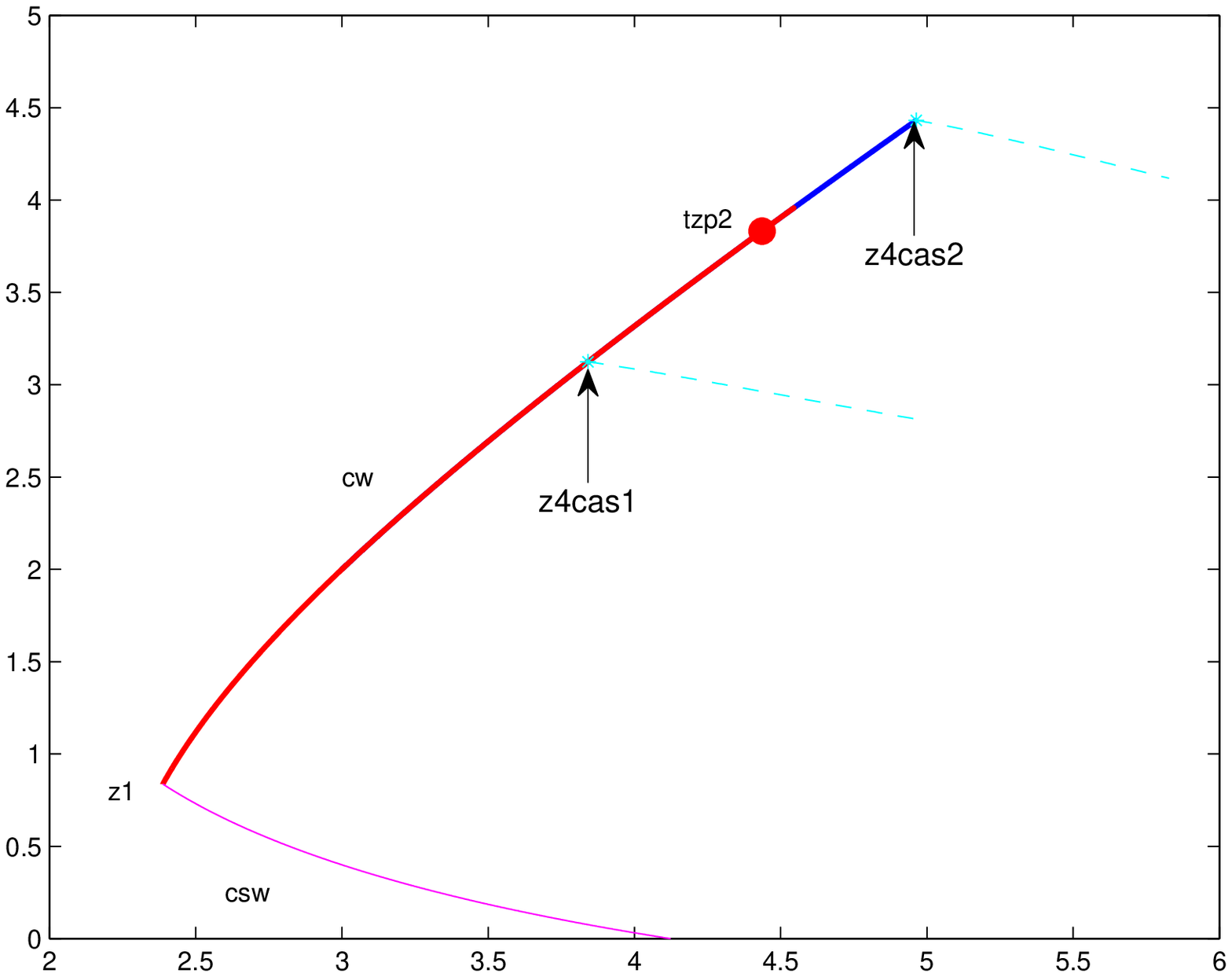}
  \label{fig:squarerootdomainsunbounded2}}
  \subfloat[$\km \ge \frac{\wm}{\ac}$]{
  \psfrag{z1}{\footnotesize $z_1$}
  \psfrag{z4cas1}{\footnotesize $\ \ z_4$ case 1}
  \psfrag{z4cas2}{\footnotesize $\ \ z_4$ case 2}
  \psfrag{tzp2}{\footnotesize $\!\!\!\tz'_2$}
  \psfrag{ztp3}{\footnotesize $\tz'_3$}
  \psfrag{zt3}{\footnotesize $\tz_3$}
  \psfrag{cw}{\footnotesize $\!\!\!\!\!\ccw$}
  \psfrag{csw}{\footnotesize $\ccsw$}
  \includegraphics[height=0.25\textwidth]{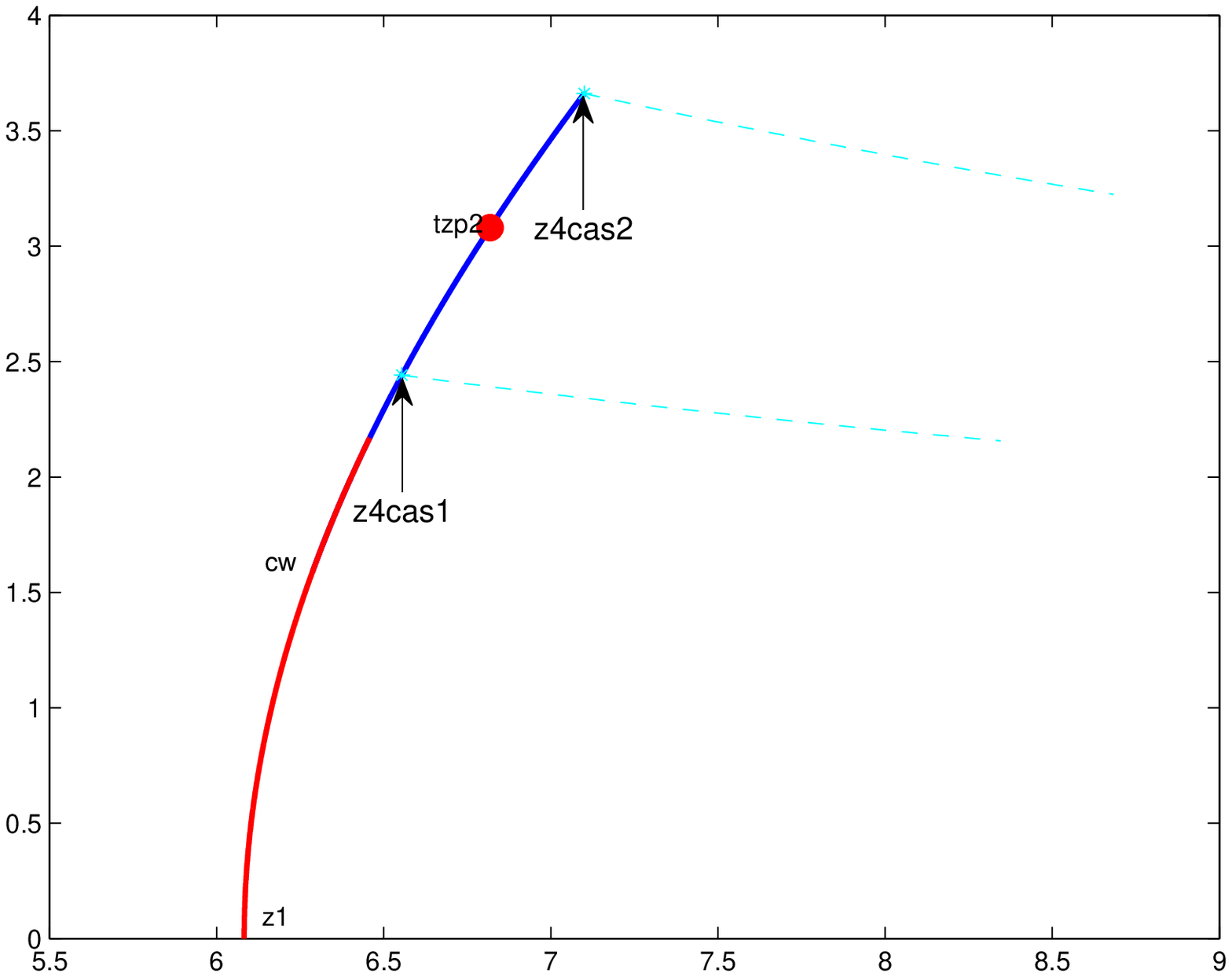}
  \label{fig:squarerootdomainsunbounded3}}
  \caption{Illustration of the domain $\td_+^\infty$ in the $(x,y)$ plane}
  \label{fig:squarerootdomainsunbounded}
\end{figure}
In order to simplify the notation, we use $l:=\frac{L}{2\nu}$. We
start with the variations of the convergence factor
\begin{equation}\label{eq:convfactoroverlap}
R(\w,k,p,\ell)=R_0(\w,k,p)e^{-2\ell x}
\end{equation}
on the west boundary $ \ccwi$. Calculating the partial derivative of
$R$ with respect to $\w$ leads to
\begin{equation}\label{eq:deriveomegaconvfactoroverlap1}
\begin{array}{rcl}
  \partial_\w R(\w,k,p,\ell) & = &
(\partial_\w R_0(\w,k,p)-2\ell R_0(\w,k,p)\partial_\w x(\w,k))e^{-2\ell x}\\
   & = &
\ds\frac{4\nu y}{|z|^2|z+p|^4} S_{k}(x,y,p,\ell),
\end{array}
\end{equation}
where we introduced the function
\[
  S_{k}(x,y,p,\ell)=2p(3x^2-y^2-p^2) -\ell \,|z^2-p^2|^2=
    2p(3x^2-y^2-p^2) -\ell \,[(x^2-y^2-p^2)^2 +4x^2y^2].
\]
The root $y=0$ of $\partial_\w R(\wm,k,p,\ell)$ corresponds to
$\w=-c\km$, which is possible only if $|\wm|\le |c\km|$.

We study now $S_{\km}(x,y,p,\ell)$.  Replacing $y^2=x^2-\alpha^2
=x^2-\xoo-4\nu^2\km^2$ from (\ref{eq:systemxy1}), we get
\[
  \tilde{S}_{\km}(x,p,\ell):=
   2p(2x^2+\alpha^2-p^2)-\ell \,((\alpha^2-p^2)^2 +4x^2(x^2-\alpha^2)),
\]
which is now a second order polynomial in $x^2$,
\begin{equation}\label{eq:deriveomegaconvfactoroverlap2}
  \tilde{S}_{\km}(x,p,\ell)=-4\ell x^4+4(\alpha^2\ell+p)x^2
    -(p^2-\alpha^2)(2p+\ell(p^2-\alpha^2)).
\end{equation}
The following lemma gives the asymptotic behavior of the roots of this
polynomial:
\begin{lemma}\label{lem:asymptoverlapinfini}
For small $\ell$, large $p$ with $\ell p$ small,
$\tilde{S}_{\km}(x,p,\ell)$ has two distinct real roots,
$$
\tx_1'(p,\ell) \sim \frac{p}{\sqrt{2}},\quad
\tx_2'(p,\ell)\sim \ds\sqrt{\frac{p}{\ell}}.
$$
The first root is the real part of a minimum of the convergence
factor, and the second root is the real part of a maximum of the
convergence factor, say at $\tz'_2$. We thus obtain that
\[
  \ds\sup_{z\in\ccwi} |\rho(z,p,\ell)|=\max(|\rho(z_1,p,\ell)|,|\rho(\tz'_2(p,\ell),p,\ell)|).
\]
\end{lemma}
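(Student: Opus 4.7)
The plan is to exploit the fact that $\tilde{S}_{\km}(x,p,\ell)$ in \eqref{eq:deriveomegaconvfactoroverlap2} is quadratic in $X:=x^{2}$, so the zeros in $x$ can be obtained in closed form. Writing the equation $\tilde{S}_{\km}(x,p,\ell)=0$ as
\[
 4\ell X^{2}-4(\alpha^{2}\ell+p)X+(p^{2}-\alpha^{2})(2p+\ell(p^{2}-\alpha^{2}))=0,
\]
I would apply the quadratic formula to express the two candidate roots $X_{\pm}=\bigl(\alpha^{2}\ell+p\pm\sqrt{\Delta}\bigr)/(2\ell)$, where $\Delta$ is the reduced discriminant, and then proceed by asymptotic expansion under the assumed regime $\ell\to 0$, $p\to\infty$, $\ell p\to 0$.

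The key computational step is to expand $\Delta=(\alpha^{2}\ell+p)^{2}-\ell(p^{2}-\alpha^{2})(2p+\ell(p^{2}-\alpha^{2}))$. A direct expansion shows that the leading terms give $\Delta\sim p^{2}(1-2\ell p)$, which is strictly positive for $\ell p$ small; in particular $\sqrt{\Delta}\sim p\sqrt{1-2\ell p}\sim p(1-\ell p)$. Substituting this into $X_{\pm}$ and keeping the relevant order, the ``minus'' branch telescopes to $X_{-}\sim p^{2}/2$, hence $\tx_1'(p,\ell)\sim p/\sqrt{2}$, while the ``plus'' branch is governed by the cancellation of the $p$ terms to give $X_{+}\sim p/\ell$, hence $\tx_2'(p,\ell)\sim\sqrt{p/\ell}$. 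Checking the ordering $p^{2}/2\ll p/\ell$ (equivalent to $\ell p\ll 2$) confirms that these two asymptotic values are distinct and lie in the correct region. The main delicate point I expect to have to control carefully is the size of the discarded terms in $\Delta$ (in particular $\ell^{2}(p^{2}-\alpha^{2})^{2}$ and $\alpha^{2}\ell p$) relative to $p^{2}$ and $2\ell p^{3}$, to be sure that the two-term expansion $\Delta\sim p^{2}-2\ell p^{3}$ is accurate at leading order, and that the square root can be expanded consistently.

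To determine the nature of the two critical points I would use the fact that the coefficient of $X^{2}$ in $\tilde{S}_{\km}$ is $-4\ell<0$: thus $\tilde{S}_{\km}(x,p,\ell)$ is negative for $x<\tx_1'$ and $x>\tx_2'$, and positive in between. By \eqref{eq:deriveomegaconvfactoroverlap1}, $\partial_{\w}R$ has, up to a positive prefactor, the same sign as $S_{\km}=\tilde{S}_{\km}$ (on the branch $y\ge 0$), and along $\ccwi$ the parameter $x$ is a monotone function of $\omega$ via \eqref{eq:systemxy3}. Consequently $R(\w,\km,p,\ell)$ is decreasing, then increasing, then decreasing as $\omega$ increases, so $\tx_1'$ gives a local minimum and $\tx_2'$ a local maximum $\tz_2'$ of the convergence factor along the western curve.

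Finally, since $\ccwi$ is unbounded but the exponential factor $e^{-2\ell x}$ forces $R\to 0$ as $x\to\infty$, the supremum on $\ccwi$ is attained either at the interior maximum $\tz_2'$ or at the initial corner $z_1$, which yields the displayed formula $\sup_{z\in\ccwi}|\rho(z,p,\ell)|=\max(|\rho(z_1,p,\ell)|,|\rho(\tz'_2(p,\ell),p,\ell)|)$ and completes the lemma.
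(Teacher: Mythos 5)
Your proof is correct and follows essentially the same route as the paper: both extract the two roots of the quadratic $\tilde{S}_{\km}$ in $X=x^2$ from its discriminant ($\Delta\sim p^{2}$, with the $-2\ell p^{3}$ correction producing the cancellation that gives $X_-\sim p^{2}/2$ and $X_+\sim p/\ell$), and both conclude that the supremum on $\ccwi$ is attained at $z_1$ or at the interior maximum $\tz'_2$. Your explicit classification of the two critical points (via the negative leading coefficient $-4\ell$, the monotonicity of $x$ in $\w$, and the decay forced by $e^{-2\ell x}$) is only implicit in the paper, which instead evaluates $|\rho|$ at both critical points; this is a presentational difference, not a different method.
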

\begin{proof}
The discriminant of the second degree polynomial $\tilde{S}_{\km}$ and
its leading asymptotic part under the conditions of Theorem
\ref{th:touto} are
 \[
  \ds \Delta=  4(\Delta_a + 2\alpha^2\ell p\, (2+\ell p)); \qquad
  \Delta_a=4p^2(1-2\ell p-\ell^2 p^2).
\]
Since $\Delta \sim \Delta_a$, $\tilde{S}_{\km}$ has two roots with
asymptotic behavior
\[
  \tx'_1 \sim \frac{p}{\sqrt{2}},\quad \tx'_2\sim \ds\sqrt{\frac{p}{\ell}}.
\]
For $\tz'_j=\tx'_j+i\sqrt{(\tx_j')^2-(\xoo+4\nu^2\km^2)}$, which we
obtain from (\ref{eq:systemxy1}), we compute
\[
|\rho(\tz'_1,p,\ell) |\sim \sqrt{\frac{1-\sqrt{2}}{1+\sqrt{2}}},\qquad
|\rho(\tz'_2,p,\ell) |\sim 1-2\sqrt{p\ell},
\]
and $|\rho(\tz'_1,p,\ell)| < |\rho(\tz'_2,p,\ell) |$ for small $\ell p$.
\end{proof}

We analyze now the cases in Figure
\ref{fig:squarerootdomainsunbounded} in detail:
\begin{itemize}
  \item Figure \ref{fig:squarerootdomainsunbounded3}, $|\wm| < |c\km|$:
  As $\w$ runs through $\R$, $z$ runs through the full hyperbola, and
$
\sup_{z\in\tilde{\cal D}_+^{\infty}} |\rho(z,p,\ell)|=\max(|\rho(z_1,p,\ell)|,|\rho(\tz_2,p,\ell)|).
$

  \item Figure \ref{fig:squarerootdomainsunbounded1} and
    \ref{fig:squarerootdomainsunbounded2}, $|\wm| > |c\km|$: to study
    the variation of $R$ on $\ccsw=z(\wm,-\soc[\km,\frac{\wm}{\ac}])$,
%
%
%
we compute
\begin{equation}\label{eq:derivekconvfactoroverlap1}
\begin{array}{c}
\partial_k R(\w,k,p,\ell)=
\ds\frac{4\nu}{|z|^2|z+p|^4} S_{\w}(x,y,p,\ell), \\
S_{\w}(x,y,p,\ell):=
2p\{(2\nu k x + c y)(x^2-y^2-p^2)+2xy(-2\nu k y + c x)\}
 -\ell \,(2\nu k x + c y)|z^2-p^2|^2.
\end{array}
\end{equation}
With the same assumptions as in the previous lemma, for any $z$ in $\ccsw$,
\[
S_{\wm}(x,y,p,\ell)\sim
-2p^3(1+\ell p^2)(2\nu k x + c y)=
-2p^3(1+\ell p^2)\partial_k x.
\]
In case of Figure \ref{fig:squarerootdomainsunbounded2}, where
$|\tk_1(\wm)| \le \km $, $\partial_k x$ has a constant sign on the
curve $\ccsw$, see the second case in Corollary \ref{cor:tangentes},
and hence the maximum of $R$ is reached at $z_1$.  In case of Figure
\ref{fig:squarerootdomainsunbounded1}, where $\km \le|\tk_1(\wm)|$,
$\soc S_{\wmm}$ is positive for $\km\le |k| < \tk_1(\wm)$, and
negative for $|k|> \tk_1(\wm)$. It must therefore vanish in a
neighborhood of $\tk_1(\wm)$, where $R$ has a maximum on $\ccsw$, at a
point we call $\tz'_3(p,\ell):=z(\wm,\tk'_3(p,\ell))$, which is
asymptotically equivalent to $\tz_1$ where the vertical tangent
occurs.
\end{itemize}
We now define the point $\tz_{sw}'(p,\ell)$ by
$$
\tz_{sw}'(p,\ell)=
\begin{cases}
z_1&\mbox{if $\wm < \ac\km$ or  $|\tk'_3(p,\ell)| \le \km \le \frac{\wm}{\ac}$},\\
\tz'_3(p,\ell)&\mbox{if   $\km\le|\tk'_3(p,\ell)| \le \frac{\wm}{\ac}$},
\end{cases}
$$
in order to write in compact form
$$
\ds\sup_{z\in\td_+^{\infty}} |\rho(z,p,\ell)|=
\max(|\rho(\tz_{sw}'(p,\ell),p,\ell)|,|\rho(\tz'_2(p,\ell),p,\ell)|) .
$$
Using the asymptotic expansions of $|\rho(\tz'_2,p,\ell)|$ above, and
$|\rho(\tz_{sw}',p,\ell)|\sim 1-2\frac{x_{sw}}{p}$, we see that for
small $\ell$,
$$
  |\rho(\tz'_2,p,\ell)|- |\rho(\tz'_{sw},p,\ell)|\sim 2(\frac{x_{sw}}{p}-\sqrt{p\ell}).
$$
This quantity is positive for $p$ smaller than
$\sqrt[3]{\frac{x_{sw}}{\ell}}$, and negative otherwise. Therefore it
vanishes for one single value of $p$, and we have asymtotically
\begin{equation}\label{eq:poptinf}
  \bar{p}^*_\infty\sim\sqrt[3]{\frac{x_{sw}^2}{\ell}}, \quad
  F_L(\bar{p}^*_\infty)\sim 1-2\sqrt[3]{\ell x_{sw}} .
 \end{equation}
We verify that $\ell \bar{p}^*_\infty$ tends to zero with $\ell$, thus
justifying all previous computations.

The proof can now be completed like for the previous theorem, showing
that $\bar{p}^*_\infty $ is a strict local minimizer and therefore
coincides with the global minimizer ${p}^*_\infty$ according to the
abstract result.

\paragraph{Proof of Theorem \ref{th:toutobounded} (Robin Conditions with
  Overlap, Discrete):} In order to prove the
results for the discretized algorithm, suppose $\ell\pr\kmm^{-1}$, $p $
large , with $\ell p$ small as in Lemma \ref{lem:asymptoverlapinfini}. The
maximum at $\tz'_2$ on ${\cal C}_w^+$ is on the curve ${\cal C}_w$ if
$\tx_2'\sim\sqrt{\frac{p}{\ell}} < x_4\sim \sqrt{2\nu\wmm} $. We see
that
$$
\frac{\tx_2'}{x_4} \sim\sqrt{\frac{p}{2\nu\ell\wmm}}
\pr
\left\{\begin{array}{lll}
\sqrt{p}  &  \gg 1
&\mbox{if $\wmm \pr\kmm $}, \\
\sqrt{\frac{p}{\kmm}} &\ll  1
&\mbox{if $\wmm\pr\kmm^2 $},
\end{array}
\right.
$$
which indicates that the continuous analysis will only be important in
the second case. We study now both cases in detail:
\begin{itemize}
  \item $\wmm\pr\kmm^2 $: Let $p \pr p^*_{0,\infty(L)}$. An asymptotic
    study shows that the derivative in $\w$ on the eastern curve
    $|k|=\kmm$ satisfies
    $$
      S_{\kmm}(z,p,\ell)\sim -\ell( 4 \nu^2\kmm^2 +16\nu^2\w^4) < 0.
    $$
    Therefore the maximum of $|\rho|$ on the east is reached at
    $z_3=z(\wmm,\soc\kmm)$. The same study on the north gives
    $$
      S_{\wmm}(z,p,\ell)\sim -\ell\partial_kx ( (4 \nu^2k^2 )^2+16\nu^2\wmm^4) .
    $$
    The sign of $S_{\wmm}(z,p,\ell)$ is the opposite of the sign of
    $x$, the maximum of $|\rho|$ on $\ccn$ is therefore reached at
    $z_4$.  From this we conclude that all values of $|\rho|$ on
    $\ccn$ and $\cce$ are smaller than the value at $z_4$.  We now
    study the variations of $R$ on the other boundaries. Since $p \pr
    p^*_{0,\infty(L)}$, the conclusions from Lemma
    \ref{lem:asymptoverlapinfini} and after are all valid, there is a
    unique value $\bar{p}^*(\ell)$ of $p$ such that
    $|\rho(\tz'_2,p,\ell)|= |\rho(\tz_{sw}',p,\ell)|$. It is for
    $\ell=\frac{L}{2\nu}$ small asymptotically equivalent to
    $p^*_{0,\infty}(L)$.

  \item $\wmm\pr\kmm$:
    We perform the asymptotic analysis in $\kmm$, assuming $p \ll
    \sqrt{\wmm}$, and study the behavior of the convergence factor on
    all four boundary curves $\ccw$, $\cce$, $\ccsw$ and $\ccn$:


    \begin{description}

    \item[Behavior of $R$ on $\ccw$:] $\tx_2' \gg x_4$, and $R$ has no
      local maximum on ${\cal C}_w$. Therefore
      $$
        \max_{{\cal C}_w}R=\max(R(z_1),R(\tz'_2)).
      $$

    \item[Behavior of $R$ on ${\cal C}_e$:] Since $p \ll \kmm$, using
      that $x\sim2\nu\kmm$, we obtain
      $$
        S_{\kmm}(x,y,p,\ell) \sim -\ell(2\nu\kmm)^4.
      $$
      The maximum of $R$ on the eastern side is therefore reached for
      $z=z_3$.

    \item[Behavior of $R$ on $\ccsw$:] The behavior of $R$ on the
      southern part remains unchanged: for $\km\le \wm/\ac$,
      $p=\go(\sqrt{2\nu\kmm}) $, the maximum of $R(\wm,\cdot,p)$ on
      $-\soc(\km,\wm/\ac)$ is reached at the single point
      $\tz''_3(p,\ell)=z(\wm,\tk''_3(p,\ell))$, whose asymptotic
      behavior is given by $ \tk''_3(p,\ell)\sim \tk_1(\wm).  $ The
      proof is similar to that of Lemma \ref{lem:variationsk}.\\

    \item[Behavior of $R$ on ${\cal C}_n$:] We extend the analysis in
      the proof of Lemma \ref{lem:definitionoftk4} to $S_{\wmm}$ in
      \eqref{eq:derivekconvfactoroverlap1}. The variations of $R$
      are determined by the sign of
    \[
      \begin{split}
       S_{\wmm}(k)&=N_{\wmm}(k)-\frac{\ell}{2p} |z^2-p^2|^2)\,(2\nu k x + c y)\\
       &=2p\left[
   (\xoo+4\nu^2k^2 -p^2 -\frac{\ell}{2p} ((\xoo+4\nu^2k^2 -p^2 )^2+16\nu^2(\wmm+ck)^2))\,(2\nu k x + c y)  \right. \\  &\hspace{20mm}
    \left.+4\nu(\wmm+ck)(-2\nu k y + c x)\phantom{^\frac11}\hspace{-1mm}\right].
       \end{split}
     \]
     Again we have to distinguish three cases for $k\pr
     k_M^\alpha$: $\alpha\le\frac{1}{2}$, $\frac{1}{2}<\alpha<1$ and $\alpha=1$:
  \begin{itemize}
    \item[$\checkmark$] $k=\go(\kmm^\frac12)$: in this case
      $S_{\wmm}(k) \sim 2p N_{\wmm}(k)$, and therefore on the curve
      ${\cal C}_n$, $S_{\wmm}$ vanishes for $\tk'_4\sim \tk_4$ under
      the conditions of case 2 in Lemma \ref{lem:definitionoftk4}, and $R$
      has a maximum there. For $k''_0\pr \sqrt{\kmm}$, $R$ has
      a minimum.

    \item[$\checkmark$] For $k\pr \kmm^\alpha$ with $\frac12< \alpha < 1$, the
      overlap comes into play. We have
      \[
        S_{\wmm}(k) \sim 2p (2\nu k)^4\soc (1-\frac{\ell}{2p}(2\nu k)^2).
      \]
      The right hand side vanishes for $2\nu
      k=\sqrt{\frac{2p}{\ell}}$, and $S_{\wmm}(k)$ vanishes therefore
      in a neighbourhood of that point,
      $$
        \tk_4''\sim\frac1{2\nu }\sqrt{\frac{2p}{\ell}},
      $$
      which corresponds to a maximum of $R$ again.

      \item[$\checkmark$] For $k\pr \kmm $, the overlap dominates, and
        $ S_{\wmm}(k) \sim -\ell (2\nu k)^4\soc $.
  \end{itemize}
  Therefore, there are two local maxima on the curve $\ccn$, and we must
  compare $|\rho|$ at $\tz_n$ defined in \eqref{eq:pointsencompet},
  $$
    |\rho(\tz_n,p)|\sim \left|\frac{(1+i)\sqrt{2\nu\wmm}-p}
      {(1+i)\sqrt{2\nu\wmm}-p}e^{-\ell(1+i)\sqrt{2\nu\wmm}}\right|
    \sim 1 - \frac{p}{\sqrt{2\nu\wmm}},
  $$
  and $|\rho|$ at $\tz''_4=z(\tk''_4,\wmm)$,
  $$
    \tz''_4\sim 2\nu|\tk''_4|\left(1+i\frac{\wmm}{\nu (\tk''_4)^2}\right)
      \sim\sqrt{\frac{2p}{\ell}} (1+i\frac{\nu \ell\wmm}{p})
      \sim \sqrt{\frac{2p}{\ell}},
  $$
  which gives for $|\rho|$ at $\tz''_4$
  $$
    |\rho(\tz''_4,p)|\sim \frac{\sqrt{\frac{2p}{\ell}}-p}
      {\sqrt{\frac{2p}{\ell}}-p} e^{-\sqrt{2p\ell} }\sim
      \frac{1-\sqrt{\frac{p\ell}{2}}}{1+\sqrt{\frac{p\ell}{2}}}(1-
      \sqrt{2p\ell}) \sim 1- 2\sqrt{2p\ell}.
  $$
  Since $\frac{p}{\sqrt{2\nu\wmm}} \gg \sqrt{2p\ell}$, we find
  $$
    \sup_{{\cal C}_n} |\rho(z,p)|=|\rho(\tz''_4,p)|\sim 1- 2\sqrt{2p\ell}.
  $$
  \end{description}
  The rest of the proof is now similar to the proof of the
  nonoverlapping case, except that now the best $p$ equilibrates the
  values of $|\rho|$ at the points $\tz''_4$ and $\tz'_w$, which is
  equivalent to $z_{sw}$. Asymptotically we have
  $$
    |\rho(\tz''_w)|\sim 1-2\frac{x_{sw}}{p},
  $$
  which gives for $p$ and the optimized contraction factor the
  asymptotic values
  $$
    \bar{p}^*(L)= \sqrt[3]{\frac{x_{sw}^2}{2\ell}}, \quad
      \delta^*(L)\sim 1-2\frac{x_{sw}}{\bar{p}^*(L)}.
  $$
\end{itemize}
The full justification that $\bar{p}^*(L)$ is indeed a strict local,
and hence the global optimum is analogous to the nonoverlapping case
and we omit it, and the proof is complete.


\section{Optimization of Ventcel Transmission Conditions}

This section is devoted to the proof of Theorems \ref{th:pqnoverlap},
\ref{th:toutopq} and \ref{th:toutoboundedpq}.  We start with a change
of variables,
$$
   s = p+q(z^2-\xoo)/4\nu= \tp +\tq z^2,\quad \tp=p-\xoo/4\nu, \quad \tq=q/4\nu,
$$
with which we can further simplify the convergence factor,
\begin{equation}\label{eq:rhopqmod}
   \rho(z,p,q,L)=\ds  \frac{\tp+\tq\,z^2-z}{\tp+\tq\,z^2+z} \
   e^{- \frac{Lz}{2\nu}}.
\end{equation}
Note that we will still write the arguments in terms of $p$ and $q$,
which are now simply functions of $\tilde{p}$ and $\tilde{q}$, and the
min-max problem is still
\begin{equation}\label{BestApproxtildepq}
  \ds \inf_{(p,q)\in\C^2}\ \sup_{z\in
     \td}|\rho(z,p,q,L)| =\sup_{z\in
     \td}|\rho (z,p^*,q^*,L)|
     =: \delta_1^*(L) .
\end{equation}

\subsection{The Nonoverlapping Case}

\paragraph{Proof of Theorem \ref{th:pqnoverlap} (Ventcel Conditions
  Without Overlap):} by the abstract Theorem \ref{th:gennonoverlappq},
the best approximation problem has a unique solution
$({p}_1^*(0),{q}_1^*(0))$. We search now for a strict local minimum
for the function $F_0(p,q)$.  We first analyze the variations of $R$
on the boundaries, and identify three local maxima. Then we show that
there exists $(\bar{p}_1^*,\bar{q}_1^*)$ such that these three values
coincide, and we compute their asymptotic behavior, showing that they
satisfy the assumptions. We finally show that
$(\bar{p}_1^*,\bar{q}_1^*)$ constitutes a strict local minimum for the
function $F_0$ on $\R_+\times\R_+$, from which it follows that the
local minimizer $(\bar{p}_1^*,\bar{q}_1^*)=({p}_1^*(0),{q}_1^*(0))$,
the global minimizer.

{\bf\em Local Maxima of the Convergence Factor:} The following Lemma
gives the local maxima of the convergence factor for the two
asymptotic regimes of an explicit and implicit time integration we are
interested in:
\begin{lemma}\label{lem:localmaxpqnooverlap}
Suppose the parameters in the Ventcel transmission condition satisfy
\begin{equation}\label{eq:asumptionpq0}
  p\pr \kmm^\alpha, \quad q\pr \kmm^\beta,\quad
    0<\alpha < \frac12 <\beta < 1, \ \alpha+\beta \le 1.
\end{equation}
Then, we have for the two asymptotic regimes of interest
\begin{enumerate}
\item in the implicit case, when $\kmm= C_h\wmm$, the supremum of the
  convergence factor is given by
  $$
   \sup_{\td_{+}}|\rho_0(z,p,q)| =
     \begin{cases}
     \max(|\rho_0(\bz_{sw}(p,q),p,q)|,|\rho_0(\bz_1(p,q),p,q)|,
       |\rho_0(z_3,p,q)|)
      & \mbox{ if }  \frac{p}{q} < \wmm,\\
     \max(|\rho_0(\bz_{sw}(p,q),p,q)|,|\rho_0(\bz_n(p,q),p,q)|,
       |\rho_0(z_3,p,q)|)
      & \mbox{ if }  \frac{p}{q} > \wmm,\\
  \end{cases}
  $$
  where $\bz_n \in \ccn$ is defined in \eqref{eq:pointnord}, and the
  asymptotic behavior is
  \begin{equation}\label{eq:expimplicit}
    \begin{array}{rlrl}
      \ds|\rho_0(\bz_{sw},p,q)|&\sim \ds 1-2\frac{x_{sw}}{p},\quad
      &|\rho_0(z_3,p,q)|&\sim \ds 1-\frac{4}{q\kmm},\\
      |\rho_0(\bz_1,p,q)|
      &\sim\ds
      1-2\sqrt{\frac{pq }{2\nu }},\quad
      &\ds|\rho_0(\bz_n,p,q)|&\sim\ds 1- \frac{p}{\sqrt{2\nu\wmm}}
      P(\frac{q\wmm}{p}),
    \end{array}
  \end{equation}
  where $P(Q)$ is defined in \eqref{eq:PdeQ}.

  \item in the explicit case, when $\wmm=\frac{1}{\pi C_h}\kmm^2$,
  the supremum of the convergence factor is given by
  $$
    \sup_{\td_{+}}|\rho_0(z,p,q)| =
     \max(|\rho_0(\bz_{sw}(p,q),p,q)|,|\rho_0(\bz_1(p,q),p,q)|,
      |\rho_0(\bz'_n(p,q),p,q)|),
  $$
  where $\bz_n(p,q)$ is defined in \eqref{eq:bzn}, and
  \begin{equation}\label{eq:bzprimen}
    \bz'_n(p,q)=\begin{cases}
      z_3 &\mbox{if $d> d_0$},\\
      \bz_n(p,q) &\mbox{if $d < d_0$},\\
     \end{cases}
  \end{equation}
  and we have asymptotically
  \begin{equation}\label{eq:expexplicit}
    |\rho(\bz_{sw},p,q)|\sim 1-2\frac{x_{sw}}{p},\quad
    |\rho(\bz_{1},p,q)|\sim 1-2\sqrt{\frac{pq}{2\nu}},\quad
    |\rho_0(\bz'_{n},p,q)|\sim
     1- \frac{4C}{ q \kmm } \sqrt{\frac{d}{2}},
  \end{equation}
  with $C $ defined in \eqref{BestPRhoNoOverlap}.
\end{enumerate}
\end{lemma}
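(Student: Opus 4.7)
}
Since $\rho_0(\cdot,p,q)$ is meromorphic on $\td_+$ with poles outside the domain (because $\Re z\ge \alpha>0$ and $\Re(\tp+\tq z^2)>0$ for $p,q>0$ under \eqref{eq:asumptionpq0}), the maximum of $R_0$ is attained on the boundary $\partial\td_+=\ccw\cup\ccsw\cup\ccn\cup\cce$. The plan is therefore to walk around the boundary, locate the critical points of $R_0(z,p,q)$ on each arc under the asymptotic regime \eqref{eq:asumptionpq0}, and compare the resulting local-maximum values. The strategy mirrors exactly what was done in Lemmas \ref{lem:variationsw}, \ref{lem:variationsk}, \ref{lem:definitionoftk4} for the Robin case, but with the replacement $p\mapsto \tp+\tq z^2$, which introduces two new ingredients: an extra balance between $\tp$ and $\tq z^2$ (responsible for the new critical point $\bz_1$), and a more delicate cubic-type critical equation on $\ccn$ (responsible for the function $P(Q)$).

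The first step is to compute $\partial_\w R_0$ and $\partial_k R_0$ via the chain rule using \eqref{eq:systemxpypsolved} and the analogous formulas for $\partial_\w x,\partial_\w y$. Each derivative factors as a positive quantity times a polynomial $\Sigma(x,y,\tp,\tq)$ whose real zeros characterize the critical points. On $\cce$ ($|k|=\kmm$) the dominant term is of order $\tq^2 \kmm^4\gg 1$ and one sign, so $R_0$ is monotone in $\w$ and its supremum on $\cce$ is attained at the corner $z_3$. On the south-western arc $\ccsw$ and the bounded part of $\ccw$ the balance reproduces the Robin analysis to leading order: because $\alpha+\beta\le 1$ forces $\tq\,|z|^2\ll \tp$ at bounded $z$, the critical point $\bz_{sw}(p,q)$ is asymptotically the same as the $\tz_{sw}$ from the Robin case and $|\rho_0(\bz_{sw})|\sim 1-2 x_{sw}/p$.

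The new candidate $\bz_1$ arises on $\ccw$ from the resonance $|\tp+\tq z^2|\approx |z|$: the equation $\tq x^2-x+\tp=0$ has roots $x_\pm=(1\pm\sqrt{1-4\tp\tq})/(2\tq)$, and under $\alpha+\beta\le 1$ the quantity $4\tp\tq\pr \kmm^{\alpha+\beta-1}$ stays bounded, so a local minimum of the denominator occurs near the smaller root, producing a local maximum $\bz_1(p,q)$ of $R_0$ there. A direct computation of the next-order correction, using $x^2-y^2=\xoo+4\nu^2\km^2$ on $\ccw$, yields $|\rho_0(\bz_1)|\sim 1-2\sqrt{pq/(2\nu)}$. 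On the northern arc $\ccn$ ($\w=\wmm$) the critical-point equation is more involved: parametrising $z$ by $z\sim \sqrt{2\nu\wmm}(1+it)$ and expanding $R_0$ to leading order introduces the function $g(t)=(2t-\sqrt{t^2+1})/(t^2+1)$, whose root $t_2(Q)$ above $t_0$ gives the interior critical point $\bz_n(p,q)$ when $Q:=q\wmm/p<g_1$. For $Q>g_1$ this interior critical point leaves the admissible range and the relevant maximum coalesces with the corner $z_4$, producing the second branch $P(Q)=1+Q$ of \eqref{eq:PdeQ} and giving the expansion for $|\rho_0(\bz_n)|$ in \eqref{eq:expimplicit}.

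Comparing the three competing local maxima $\bz_{sw}$, $\bz_1$, $z_3$ (or $\bz_n$) then yields the stated supremum in the implicit regime, with the split $p/q\lessgtr \wmm$ marking the crossover from the corner $z_3$ dominating to $\bz_n$ dominating. In the explicit regime $\wmm\pr\kmm^2$ an analogous analysis on $\ccn$ produces the critical point $\bz_n$ with modulus $|\bz_n|\sim\sqrt{2\nu\wmm}$ and expansion $|\rho_0(\bz_n)|\sim 1-(4C/(q\kmm))\sqrt{d/2}$; comparison with $|\rho_0(z_3)|\sim 1-(4/(q\kmm))\sqrt{d(d+\sqrt{1+d^2})/(2(1+d^2))}$ reduces to the sign of $1-\sqrt{(d+\sqrt{1+d^2})/(1+d^2)}$, whose transition occurs precisely at the real root $d_0\approx 1.543679$ of $d^3-2d^2+2d-2$, giving the definition \eqref{eq:bzprimen}.

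The main obstacle is the analysis on $\ccn$. Unlike the Robin case where the critical equation is a quartic with explicit roots (cf.\ Lemma \ref{lem:definitionoftk4}), the Ventcel critical equation has three competing monomials ($\tp$, $\tq z^2$, $z$) of comparable size once $Q\pr 1$, and the relevant root must be tracked through the transition at $Q=g_1$ where the interior critical point merges with a corner. Carrying out the asymptotic expansion uniformly across this transition, and rigorously identifying the threshold $g_1$ (and similarly $d_0$ in the explicit regime) as the value where the two branches of $P$ equate, is the technically delicate part of the proof; the remaining boundary analyses are routine extensions of the Robin arguments.
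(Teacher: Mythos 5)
Your overall route is the same as the paper's: reduce to the boundary of $\td_+$ by subharmonicity, locate the critical points of $R_0$ on each arc under the scaling \eqref{eq:asumptionpq0}, and compare the resulting local maxima; the candidate points, the thresholds $g_1$ and $d_0$, and all the asymptotic values you quote agree with the paper. However, there is a genuine error in your mechanism for producing $\bz_1$ on $\ccw$. You claim the local maximum sits \emph{near the smaller root} of $\tq x^2-x+\tp=0$, i.e.\ at $|z|\sim\tp$ (equivalently $\w\sim\tp^2/4\nu$). That is exactly where $|s|=|\tp+\tq z^2|$ crosses $|z|$, and it is a local \emph{minimum} of $|\rho_0|$, not a maximum: evaluating there gives $|\rho_0|$ bounded away from $1$, not $1-2\sqrt{2\tp\tq}$. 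In the paper's analysis the zero set of $\partial_\w R_0$ on $\ccw$ is governed by the cubic $Q_3(\xi)\sim\tq^3\xi^3-\tq\xi^2+\tp\xi-\tp^3$ in $\xi=2x^2-X$, whose three separated roots $\xi_0\sim\tp^2\ll\xi_1\sim\tp/\tq\ll\xi_2\sim1/\tq^2$ are min--max--min; the maximum $\bz_1$ is the \emph{middle} root, at $|z|\sim\sqrt{\tp/\tq}$ (the geometric mean of your two roots, which are the two minima). Your stated value $|\rho_0(\bz_1)|\sim1-2\sqrt{pq/2\nu}$ is correct, but it cannot be obtained at the location you propose, and the correct location $\bw_1\sim\tp/(4\nu\tq)\approx p/q$ is precisely what generates the case split $p/q\lessgtr\wmm$ in the statement (it decides whether $\bz_1$ lies inside $\ccw$ or escapes past $\w=\wmm$, in which case the northern point $\bz_n$ takes over). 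Your reading of that split as ``$z_3$ versus $\bz_n$ dominating'' is therefore also off.

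A secondary imprecision concerns $\ccn$ in the implicit regime. The paper finds \emph{two} competing interior critical points there: $\bz_4$ at $k\pr1$ (from the balance $-2\nu k(\tp+\tq\yo)+c(\tp-\tq\yo)=0$, asymptotically worth $1-\tp\sqrt{2/\yo}(1+Q)$) and $\bz_5$ at $k\pr\sqrt{\wmm}$ (from $g(t)=Q$, worth the other branch of $P$). The threshold $g_1\approx0.3148$ is where these two \emph{values cross}, not where the interior critical point ``leaves the admissible range'' --- that would be $g_0\approx0.369$, where $g(t)=Q$ stops having solutions. You state the correct characterization of $g_1$ in your last paragraph, but it contradicts the mechanism you describe two paragraphs earlier; as written, the argument on $\ccn$ would miss the point $\bz_4$ entirely and could not produce the two-branch formula \eqref{eq:PdeQ}. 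Both issues are repairable, but they require redoing the critical-point analysis on $\ccw$ and $\ccn$ along the lines of the cubic $Q_3$ and the five-scale case study of $\Phi_k$ in the paper, rather than the shortcut heuristics you propose.
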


\begin{proof}
  The proof of this lemma is rather long and technical, but follows
  along the same lines as in the Robin case: we first compute the
  derivatives of $R_0(\w,k,p,q)$ in $\w$ and $k$, using the
  formulation \eqref{eq:rhopqmod}, to obtain
  $$
    \begin{array}{rcl}
      \partial_z \rho_0&=&2\frac{(\tq z^2-\tp )}{(\tp+\tq z^2+z)^2},\\
      \partial_{{\w},{k}} R_0(\w,k,p,q)
      &=&4 \Re(\partial_z \rho_0\, \bar{\rho_0}\, \partial_{{\w},{k}} z)\\
      &=&4 \frac{\Re((\tq z^2-p)(\overline{(\tp+\tq z^2)^2-z^2})
      \,\partial_{{\w},{k}} z)}{|\tp+\tq z^2+z|^4} \\
      &=&4 \frac{\Re(N(z,\bar{z})\,\partial_{{\w},{k}} z)
       }{|\tp+\tq z^2+z|^4}, \\
       N(z,\bar{z})&=&(\tq z^2-\tp)((\tp+\tq \bar{z}^2)^2-\bar{z}^2).
    \end{array}
  $$
We now expand the numerator $N(z,\bar{z})$, using $X:=\xoo+4\nu^2k^2$
and $Y:=4\nu(\w+ck)$, so that
$$
z^2= X+iY,\quad
z=x+iy,\quad
x^2-y^2=X,\quad 2xy=Y.
$$
Using this notation, we obtain
$$
\begin{array}{rcl}
\Re N(z,\bar{z})&=&
(\tq X-p)(\tq^2X^2+(2\tp\tq-1)X+\tp^2)+
\tq (\tq^2X+3\tp\tq-1)Y^2
,\\
\Im N(z,\bar{z})&=&Y(-\tq^3X^2+2\tp\tq^2X+
\tp(3\tp\tq -1)-\tq^3Y^2).
\end{array}
$$
With the assumption on the coefficients $\tp$ and $\tq$, $\tp\tq \ll
1$, we have
\begin{equation}\label{eq:formulasN}
\begin{array}{rcl}
\Re N(z,\bar{z})&\sim&
\tq^3 X(X^2+Y^2)-\tq(X^2+Y^2)+\tp X-\tp^3,\\
\Im N(z,\bar{z})&\sim&Y(-\tq^3(X^2+Y^2)+2\tp\tq^2X-\tp).
\end{array}
\end{equation}
We present now the remining three major steps in the proof:
\begin{enumerate}
  \item We begin by studying, for fixed $k$, the variations of
    $\w\mapsto R_0(\w,k,p,q)$.  Since $\partial_\w
    z=2\nu(y+ix)/|z|^2$,
$$
\begin{array}{rcl}
\partial_{{\w}} R_0(\w,k,p,q)&=&
8\nu \frac{
\Re(N(z,\bar{z})\,(y+ix))
}{|\tp+\tq z^2+z|^4|z|^2}=
8\nu \frac{
\Phi_\omega
}{|\tp+\tq z^2+z|^4|z|^2} \\
\Phi_\omega&=& y\Re N-x\Im N\\
&\sim& y(\tq^3 X(X^2+Y^2)-\tq(X^2+Y^2)+\tp X-\tp^3
-2x^2(-\tq^3(X^2+Y^2)+2\tp\tq^2X-\tp).\\
\end{array}
$$
\begin{enumerate}
\item We study first the left boundary $\ccw$ with $k=\km$, where
  $X=\go(1)$ is fixed. We define $\xi=2x^2-X$, and replace
  $2x^2=\xi+X$, $X^2+Y^2=\xi^2$ in the previous expression. This
  yields a third order polynomial in the $\xi$ variable,
  \begin{equation}\label{eq:defQ3}
    \Phi_\omega \sim yQ_3(\xi) := y\left(
    \tq^3\xi^3+\tq(2\tq^2X-1)\xi^2+\tp(1-2\tq^2X)\xi+
    \tp(2X-2\tq^2X^2-\tp^2)\right).
  \end{equation}
  The principal part of $Q_3$ is
  \begin{equation}\label{eq:asympQ3}
    Q_3(\xi)\sim \tq^3\xi^3-\tq \xi^2+\tp \xi  -\tp^3.
  \end{equation}
  Since $y$ is always positive or vanishes for $\w=-c\km$ if
  $|c|\km\in (\wm,\wmm)$ (see Figure \ref{fig:squarerootdomainscp}),
  the sign of $\partial_\w R_0(z,p,q) $ is the sign of $Q_3(\xi)$.
  $Q_{3}$ has asymptotically three positive roots
\[
1 \ll \xi_0\sim \tp^2  \ll
\xi_1= \frac{\tp}{ \tq}  \ \ll
\xi_2\sim \frac{1}{ \tq^2}.
\]
With the assumptions on $\tp$ and $\tq$, the roots are
separated. Therefore, by continuity, $Q_{3}$ has three roots
$\xi'_0,\xi'_1,\xi'_2$ equivalent to $\xi_0,\xi_1,\xi_2$, and
$\partial_\w R_0(\w,k,p,q)$ has, in addition to $-c\km $, three zeros
$ \bw_j \sim \xi_j/4\nu$, $j=0,1,2$.  $\bw_0 $ and $\bw_2$ correspond
to minima of $R_0$. Note that $z(\bw_j(k),k)=z(\bw_j(-k),-k)$, so that
we can consider the part corresponding to $k=\soc \km $ only: there
exists a unique maximum at $\bz_1(p,q)=z(\bw_1(\soc \km),\soc \km)$,
and two minima at $z(\bw_0(\soc \km),\soc \km)$ and $z(\bw_2(\soc
\km),\soc \km)$, and we have the ordering
\begin{equation}
  \wm \ll \bw_0 \sim \frac{\tp^2}{4\nu} \ll \bw_1\sim \frac{\tp}{4\nu\tq}  \
     \ll \bw_2\sim \frac{1}{4\nu\tq^2}.
\end{equation}
If $\wmm \pr \kmm $, then $\bw_2 \gg \wmm$, and
 $$
     \sup_{\ccw} |\rho_0(z,p,q)|=
   \begin{cases}
        \max(|\rho_0(z_1,p,q)|, |\rho_0(\bz_1(p,q),p,q)|)& \mbox{if $\bw_1 \sim\frac{\tp}{4\nu \tq}< \wmm$},\\
        \max(|\rho_0(z_1,p,q)|, |\rho_0(z_4,p,q)|)& \mbox{if $\bw_1 \sim\frac{\tp}{4\nu \tq} > \wmm$},\\
     \end{cases}
$$
with
$$
    \ds |\rho_0(z_1,p,q)|\sim 1-2\frac{x_1}{\tp}, \quad |\rho_0(z_4,p,q)|\sim 1-\frac{\tp+4\nu\tq\wmm}{\sqrt{2\nu\wmm}},\quad
         |\rho_0(\bz_1,p,q)| \sim
        1-2\sqrt{2\tp\tq }.
$$
If $\wmm \pr \kmm^2 $, then $\bw_2 \ll \wmm$, and
$$
\sup_{\ccw} |\rho_0(z,p,q)|=
\max(|\rho_0(z_1,p,q)|,|\rho_0(\bz_1,p,q)|,|\rho_0(z_4,p,q)|),
$$
with
$$
    \ds |\rho_0(z_1,p,q)|\sim 1-2\frac{x_1}{\tp}, \quad |\rho_0(z_4,p,q)|\sim 1-2 \sqrt{2\nu\wmm}\tq,\quad
         |\rho_0(\bz_1,p,q)| \sim
        1-2\sqrt{2\tp\tq }.
$$

\item We now examine the behavior of $Q_3$ for $|k|=\kmm$. In that
  case, $X=\go(\kmm^2)$, and the asymptotics of the coefficients in
  $\Phi_\w$ are different. We use the fact that $\tq^2X \gg 1$, and $\frac{\tq
    X}{\tp} \gg 1$, to obtain
  \begin{equation}\label{eq:coeffgoinf}
    \Re N(z,\bar{z})\sim\tq^3X(X^2+Y^2),\quad
      \Im N(z,\bar{z})\sim -\tq^3Y(X^2+Y^2),
  \end{equation}
  so that
  $$
    \Phi_\w=\tq^3y(X^2+Y^2)(yX+xY) > 0,
  $$
  and we obtain for the convergence factor
  $$
    \sup_{\cce} |\rho_0(z,p,q)|=
        |\rho_0(z_3,p,q)|\sim 1- 2\frac{x_3}{\tq|z_3|^2}.
  $$
\end{enumerate}

\item Let us compute now the variations in $k$:
$$
\begin{array}{rcl}
\partial_{{k}} R_0(\w,k,p,q)&=&
4\frac{\Re(N(z,\bar{z})\,(\partial_kx+i\partial_k y))
}{|\tp+\tq z^2+z|^4 }=  8\nu\frac{\Phi_k
}{|\tp+\tq z^2+z|^4 |z|^2}\ ,\\
\Phi_k
&=&\frac{|z|^2}{2\nu} (\partial_kx\Re N(z,\bar{z})-\partial_ky\Im N(z,\bar{z}))\\
&=& (2\nu k x+cy)\Re N(z,\bar{z})-(-2\nu k y+cx)\Im N(z,\bar{z}).
\end{array}
$$
\begin{enumerate}
\item We begin with the southwest curve $\ccsw$, defined by
$\w=\wm$. Then $k$, $X$ and $Y$ are $ \go(1)$, and the asymptotics for
the coefficients are given by $$
\begin{array}{c}
\Re N(z,\bar{z}) \sim -\tp^3,
\ \Im N(z,\bar{z}) \sim -\tp,\\
\Phi_k \sim  -\frac{|z|^2}{2\nu}\tp^3\partial_kx \mbox{ if $\partial_kx \ne 0$}.
\end{array}
$$
By Corollary \ref{cor:tangentes}, if $|\tk_1(\wm)|\le\km$, $\partial_k
x$ does not change sign in the interval, and $ |\rho_0|$ is a
decreasing function of $x$. If $|\tk_1(\wm)| \in(\km,\wm/\ac)$,
$\partial_k x$ changes sign at $k=\tk_1$, and therefore $\partial_k
R_0(\w,k,p,q)$ changes sign for a point $\bk_3$ in the neighbourhood
of $\tk_1(\wm)$, which produces a maximum for $|\rho_0|$ at
$\bz_3=z(\wm,\bk_3)$.  We define
$$
\bz_{sw}=
\begin{cases}
z_1 &\mbox{if } |c\km| < \wm \mbox{ or if } |c\km| >  \wm \mbox{ and } |\bk_3| \not\in [\km,\frac{\wm}{\ac}], \\
\bz_3 \sim\tz_1(\wm) &\mbox{if } |c\km| >  \wm \mbox{ and } |\bk_3 | \in [\km,\frac{\wm}{\ac}],
\end{cases}
$$
and then obtain for the convergence factor
$$
\sup_{\ccsw} |\rho_0(z,p,q)|=|\rho_0(\bz_{sw},p,q)|\sim 1-2\frac{x_{sw}}{p}.
$$

\item We study next the northern curve $\ccn$, {\textit i.e.} $\w=\wmm$,
  $\soc k\in(\km,\kmm)$.
\begin{itemize}
    \item For $\wmm \pr\km$, we define $Y_0=4\nu\wmm$, and perform the
      asymptotic analysis in terms of $\yo$. We analyze the sign of
      $\Phi_k$ in the five asymptotic cases $k=\go(1)$,
      $k\pr\yo^\theta$ with $0<\theta<\frac12$, $k\pr\yo^\frac12$,
      $k\pr\yo^\theta$ with $\frac12<\theta<1$, and $k\pr\yo$.

  \begin{itemize}
    \item[$\checkmark$] If $k=\go(1)$, then $X=\go(1)$ and $Y\sim
      \yo$. The asymptotics for the coefficients are given by
    $$
\begin{array}{c}
\Re N(z,\bar{z})\sim
-(\tp^3+\tq \yo^2),\quad
\Im N(z,\bar{z})\sim-   \yo(\tp+\tq^3 \yo^2),
\quad x\sim y \sim\sqrt{\frac{\yo}{2}},\\
\Phi_k
\sim
x\left(
-(\tp^3+\tq \yo^2)(2\nu k +c)+\yo(\tp+\tq^3 \yo^2)(-2\nu k +c)
\right)
.
\end{array}
$$
With the assumptions on the coefficients, $\tp^2\ll \yo$ and
$\tq^2\yo\ll1$, so that
$$\Phi_k
\sim
x\yo\left(
-2\nu k(\tp +\tq \yo ) +c(\tp -\tq \yo )
\right)
.
$$
The quantity on the left changes sign for one value of $k$, therefore
$\Phi_k $ changes sign for
$$
  \bk_4(p,q)\sim \frac{c}{2\nu}\frac{\tp-\tq \yo}{\tp+\tq \yo},
  \quad
  \bz_4(p,q)=z(\wmm,\bk_4(p,q)).
$$
The point $\bz_4$ corresponds to a maximum, and is on $\ccn$ if and
only if the sign of $\bk_4$ is the sign of $c$, and its modulus is
larger than $\km$. If $\alpha +\beta < 1$, $ \bk_4(p,q)\sim
-\frac{c}{2\nu}$ and has the wrong sign. Therefore $\bz_4$ belongs to
$\ccn$ if and only if $\alpha +\beta =1$, and $\frac{\tq\yo}{\tp} <
1$.  At that point, $\tp+\tq z^2 \sim\tp+i\yo\tq \pr\yo^\alpha \ll
\bz_4\sim\sqrt{\frac{\yo}{2}}$, and therefore
$$
  \rho_0(\bz_4(p,q),p,q)\sim -(1-2\frac{\tp+\tq \tz_4^2}{\bz_4}),\quad
  |\rho_0(\bz_4(p,q),p,q)| \sim 1-\sqrt{\frac{2}{\yo}}( \tp +\tq \yo).
$$
\item[$\checkmark$] If $k\pr\yo^\theta$ with $0<\theta<\frac12$], then
$$
\begin{array}{rcl}
\Phi_k
&\sim&
2\nu k x\left(\Re N(z,\bar{z})+\Im N(z,\bar{z})\right)\\
&\sim&
-2\nu k x\left(\tq^3\yo^3+\tq\yo^2+\tp\yo)\right)
.
\end{array}
$$
This quantity has a constant sign equal to the sign of $k$, or
equivalently to the sign of $\partial_k x$. Therefore in this area,
$|\rho_0|$ is an increasing function of $x$.

\item[$\checkmark$] If $k\pr\yo^\frac12$], then $X\pr\yo$,
      $Y\sim\yo$, and inserting $t=X/\yo$, we have
$$
\begin{array}{rcl}
\Re N(z,\bar{z})&\sim&
 \tp X-\tq (X^2+\yo^2),\quad
\Im N(z,\bar{z})\sim-   \yo(\tp+\tq^3 (X^2+\yo^2)),
\quad x\pr y \pr\sqrt{\frac{\yo}{2}},\\
\Phi_k
&\sim&
2\nu k\left(x\Re N(z,\bar{z})+y\Im N(z,\bar{z})\right)
\sim
2\nu k\left(x(\tp X-\tq (X^2+\yo^2))
-y \yo(\tp+\tq^3 (X^2+\yo^2))
\right)\\
&\sim&
2\nu kx\yo\left( \tp t-\tq\yo  (t^2+1 )
-(\sqrt{t^2+1}-t)(\tp+\tq^3\yo^2 (t^2+1))
\right).
\end{array}
$$
Since $\tq^3\yo^2\ll \tq\yo$, asymptotically the only remaining terms are
$$
  \Phi_k\sim 2\nu kx\yo( \tp(2 t-\sqrt{t^2+1})-\tq\yo  (t^2+1 )).
$$
If $\alpha + \beta < 1$, $ \Phi_k\sim- 2\nu kx \tq\yo^2 (t^2+1 ) ) $
and does not vanish; $|\rho_0|$ is still a decreasing function of $x$
in this zone. If $\alpha + \beta =1$, we define the function
$g(t)=\frac{2t-\sqrt{t^2+1}}{t^2+1}$, drawn in Figure
\ref{fig:fonctiong}, and rewrite $\Phi_k$ as
\begin{equation}\label{eq:phik12}
\Phi_k\sim 2\nu kx\yo\tp (t^2+1 )( g(t)-\frac{\tq\yo}{\tp} ).
\end{equation}
 
\begin{figure}
  \centering
  \psfrag{Q}[][]{\footnotesize$Q$}
  \psfrag{t2}[][]{\footnotesize$t_2(Q)$}
  \psfrag{t0}[][]{\footnotesize$t_0$}
  \includegraphics[width=0.45\textwidth]{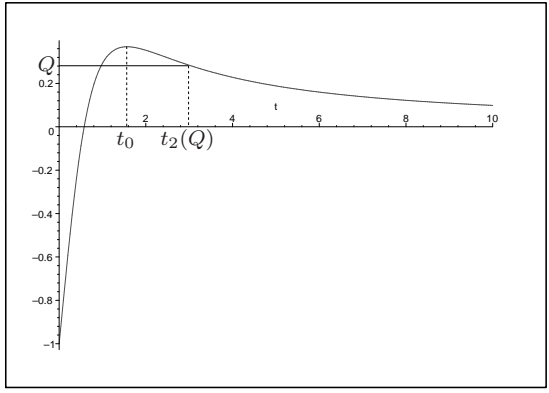}
\caption{Graph of the fonction $g$}
\label{fig:fonctiong}
\end{figure}
The function $g$ has a maximum at $t_0=\sqrt{54+6\sqrt{33}}/6\approx
1.5676$, with $g_0:=g(t_0)\approx 0.3690$. Therefore, if
$\frac{\yo\tq}{\tp} > g_0$, $k \Phi_k $ is negative for all $t$, and
$|\rho_0|$ is a decreasing function of $x$. Otherwise, the right hand
side in \eqref{eq:phik12} changes sign twice: the first time at
$t_1(\frac{Y\tq}{\tp}) <t_0$ corresponds to a local minimum, and the
second time at $t_2(\frac{Y\tq}{\tp}) > t_0$ corresponds to a local
maximum,
$$
\bk_5(p,q)\sim  \frac{\soc}{2\nu}
\sqrt{ \yo t_2(\frac{\yo\tq}{\tp}) },
\quad
\bz_5(p,q)=z(\wmm,\bk_5(\tp,\tq)) .
$$
\item[$\checkmark$] If $k\pr\yo^\theta$ with $\frac12<\theta<1$,
      then $X \gg \yo$, $Y\sim \yo$, and
$$
\begin{array}{rcl}
\Re N(z,\bar{z})&\sim&
 X(\tq^3X^2-\tq X +\tp),\quad
\Im N(z,\bar{z})\sim-   \yo(\tp+\tq^3 X^2),
\quad x\sim \sqrt{X},   y \sim\frac{\yo}{2\sqrt{X}},\\
\Phi_k
&\sim&
 \nu k X^{-\frac12}(2X\Re N(z,\bar{z}) +\yo\Im N(z,\bar{z}))\\
\Phi_k
&\sim&
2\nu k X^\frac32(\tq^3X^2-\tq X+\tp).
\end{array}
$$
The right hand side, as a function of $X$, has only one root for
$\frac{1}{2}<\theta<1$, $\frac{1}{\tq^2}$, corresponding to a local
minimum.

\item[$\checkmark$] If $k\pr\yo$, then $X\pr\yo^2$, $Y\pr \yo$, and
$$
\begin{array}{rcl}
\Re N(z,\bar{z})&\sim&
 \tq^3X^3 ,\quad
\Im N(z,\bar{z})\sim-  \tq^3Y X^2),
\quad x\sim \sqrt{X},   y \sim\frac{Y}{2\sqrt{X}},\\
\Phi_k
&\sim&
 \nu k X^{-\frac12}(2X\Re N(z,\bar{z}) +Y\Im N(z,\bar{z}))\\
&\sim&
2\nu k \tq^3X^\frac32(2X^2-Y^2)
\sim 4\nu k \tq^3X^\frac72 .
\end{array}
$$
\end{itemize}
To summarize we have :
\begin{itemize}
  \item if $\alpha + \beta < 1$, $ k \mapsto
    |\rho_0(\wmm,k,p,q)|$ has no local maximum on the curve $\ccn$.
  \item if $\alpha + \beta = 1$, $ k \mapsto
    |\rho_0(\wmm,k,p,q)|$ has two local maxima on the curve $\ccn$,
    $\bz_4(p,q)$ and $\bz_5(p,q)$.
\end{itemize}
To compare them, we define $Q=\frac{\tq \yo}{\tp}$, and get
$$
  \bk_5(p,q)\sim  \frac{\soc}{2\nu} \sqrt{ \yo t_2(Q) }, \quad
  \bz_5(p,q)=z(\wmm,\bk_5(p,q)),
\quad
|\rho_0(\bz_5,p,q)|\sim 1-2(\frac{\tp}{|\bz_5|^2}+\tq) \Re \bz_5.
$$
The convergence factors $|\rho_0(\bz_4,p,q)|$ and
$|\rho_0(\bz_5,p,q)|$ are both $1-\vpr(\wmm^{\frac14})$. In order to
compare the two, we compute
$$
|\rho_0(\bz_4,p,q)|\sim
 1-\tp\sqrt{\frac{2}{Y}}( 1 +Q),\quad
|\rho_0(\bz_5,p,q)|\sim 1-\tp\sqrt{\frac{2}{Y}}\sqrt{1+\sqrt{t_2(Q)^2+1}}( \frac{1}{\sqrt{t_2(Q)^2+1}} +Q).
$$
It is easier to compare
$$
h_2(t)=1 +g(t) \mbox{ and }h_1(t)=\sqrt{1+\sqrt{t^2+1}}( \frac{1}{\sqrt{t^2+1}} +g(t)),
$$
for $t \ge t_0$. A direct computation shows that
$$
\begin{cases}
\mbox{for }t < \bar{t}\approx 2.5484  & h_1(t) > h_2(t),\\
\mbox{for }t > \bar{t}\approx 2.5484  & h_1(t) < h_2(t),\\
\end{cases}
$$
which implies
$$
\begin{cases}
\mbox{for }\frac{\tq \yo}{\tp}> g_1\approx  0.3148  & |\rho_0(\bz_5,p,q)|< |\rho_0(\bz_4,p,q)|,\\
\mbox{for }\frac{\tq \yo}{\tp}< g_1\approx  0.3148  & |\rho_0(\bz_5,p,q)|> |\rho_0(\bz_4,p,q)|.
\end{cases}
$$
We can now conclude the northern study for the case where $\wmm
\pr\km$. We define
\begin{equation}\label{eq:pointnord}
\bz_n(\tp,\tq)=
\left\{
\begin{array}{lll}
\bz_5(\tp,\tq)&\mbox{if }\frac{\tq \yo}{\tp}< g_1\approx .1735,&\\
\bz_4(\tp,\tq) &\mbox{if }g_1<\frac{\tq \yo}{\tp}<1 &\mbox{and }\km\le|\bk_4|,\\
z_4 &\mbox{if }g_1<\frac{\tq \yo}{\tp}<1 &\mbox{and }\km \ge |\bk_4|,\\
z_4 &\mbox{if } \frac{\tq \yo}{\tp}>1. &
\end{array}
\right.
\end{equation}
Then we obtain for the convergence factor
$$
\sup_{\ccn} |\rho_0(z,p,q)|=
\max(|\rho_0(z_3,p,q)|,|\rho(z_n(\tp,\tq),p,q)|).
$$
with the asymptotic behavior ($P(Q)$ is defined in \eqref{eq:PdeQ})
\begin{equation}\label{eq:rhonord}
|\rho_0(\bz_n(p,q),p,q)|
\sim 1-\sqrt{\frac{2}{\yo}}\tp P(Q),
\quad
|\rho_0(z_3,p,q)| \sim1-\frac{1}{\nu \kmm\tq}.
\end{equation}

  \item If $\wmm\pr\kmm^2$, then $\yo=\go(\kmm^2)$, $X \ll Y$, and we obtain
  that
  \begin{itemize}
   \item[$\checkmark$] for $k\ll \kmm $, the dominant part of $\Phi_k$
   is given by
$$
\begin{array}{rcl}
\Phi_k &\sim& x\tq Y^2((2\nu k +c)(\tq^2X-1)+(-2\nu k+c)\tq^2Y)\\
&\sim& x\tq Y^2(2\nu k (\tq^2(X-Y)-1)+c(\tq^2(X+Y)-1))\\
&\sim& x\tq^3 Y^3(-2\nu k+c).
\end{array}
$$
Remember that $\tk_2(\wmm)$ is the point where $\partial_k y$
vanishes. If $|\tk_2(\wmm)|\le \km$, $\partial_k y$ does not vanish on
the curve $\ccn$, and $|\rho_0|$ is a decreasing function of $x$. If
$|\tk_2(\wmm)|> \km$, $\partial_k y$ does vanish on $\ccn$, at
$$
\bk_4(p,q)\sim \tk_2(\wmm)\sim \frac{c}{2\nu},\quad
\bz_4(p,q)=z(\bk_4(p,q),p,q)\sim \sqrt{\frac{\yo}{2}}(1+i), \quad
\rho_0(\bz_4(p,q),p,q) \sim  1-2\frac{1}{\tq \bz_4},
$$
which implies for the modulus of the convergence factor
$$
|\rho_0(\bz_4(p,q),p,q)|
\sim 1-2\Re \frac{1}{\tq \bz_4}
\sim 1-\frac{1}{\tq}\sqrt{\frac{2}{\yo}}.
$$
\item[$\checkmark$] for $k\pr \kmm  $
$$
\begin{array}{rcl}
\Re N(z,\bar{z})&\sim&
 \tq^3X(X^2+Y^2) ,\quad
\Im N(z,\bar{z})\sim-  \tq^3Y(X^2+Y^2),
\quad x\pr  y \pr\sqrt{\yo}, ,\\
\Phi_k
&\sim&
 2\nu k \tq^3(X^2+Y^2)(xX-yY)\sim  2\nu k x\tq^3(X^2+Y^2)(2X -\sqrt{X^2+Y^2}).
\end{array}
$$
The right hand side changes sign for $X=Y/\sqrt{3}$ corresponding to a
minimum.  Since $x$ is an increasing function of $X$,
 $$
 z(k)\in \ccn \iff \yo/\sqrt3 \le 4\nu^2\kmm^2.
 $$
 Note as in the first part, $\wmm=\frac{\nu}{d}\kmm^2$, and thus
$$
 z(k)\in \ccn \iff  \frac{1}{d\sqrt3} \le  1
 \iff  d\ge \frac{1}{ \sqrt3}.
 $$
\end{itemize}
 We define
\begin{equation}\label{eq:bzn}
  \bz_n(p,q)=\left\{\begin{array}{ll}
    \bz_4(p,q) &\mbox{if } \km\le|\bk_4|\sim\frac{\ac}{2\nu},\\
    z_4 &\mbox{if }\km \ge |\bk_4|,
  \end{array}\right.
\end{equation}
and obtain
$$
  |\rho_0(\bz_n(\tp,\tq),p,q)| \sim 1- \frac{1}{\tq}\sqrt{\frac{2}{\yo}}.
$$
The maximum of $|\rho_0|$ on $\ccn$ is therefore reached at $\bz_n$ or
$z_3$, with
$$
  |\rho_0(z_3,p,q)|\sim1- \frac{1}{\nu\tq \kmm }
   \sqrt{\frac{d}{2}\left( \frac{d+\sqrt{d^2+1}}{d^2+1}\right)}.
$$
A short computation shows that $|\rho_0(z_3,p,q)|$ and
$|\rho_0(\bz_n,p,q)|$ are asymptotically of the same order, and that
$$
  \sup_{\ccn} |\rho_0(z,p,q)|=\left\{\begin{array}{ll}
    |\rho_0(z_3,p,q)|  &\mbox{if } d> d_0\\
    |\rho(\bz_n,p,q)|  &\mbox{if } d < d_0
  \end{array}\right.
  \sim\ 1- \frac{1}{\nu\tq \kmm } C\sqrt{\frac{d}{2}},
$$
in the notation of Theorem \ref{th:toutno}.
\end{itemize}
\end{enumerate}

\item We can now finish with the southern part on the east, {\textit
  i.e.} $\w=-\wmm$, $\soc k\in(\wmm/\ac,\kmm)$. For this part to
  exist, $\wmm/\ac$ has to be smaller than $\kmm$, thus
  $\wmm=\go(\kmm)$, which implies that $X=\go(\kmm^2) \gg Y$, and
  $$
   \Phi_k \sim \tq^3 X^2(X-iY)(\partial_kx+i\partial_k y) \sim
    \tq^3 X^2\soc\frac{2\nu}{|z|^2}(X^2+Y(\ac\sqrt{X}-\frac{Y}{2})
    \sim \tq^3 X^4\frac{2\nu\soc}{|z|^2}.
  $$
  Therefore $|\rho_0|$ is an increasing function of $x$, and
  $$
    \sup_{\ccse}|\rho_0(z,p,q)| =|\rho_0(z_3,p,q)|.
  $$
\end{enumerate}
We can now simply collect all the previous results, and returning to
the variables $p$ and $q$ concludes the proof of this long lemma.

\end{proof}

{\bf\em Determination of the Global Minimizer by Equioscillation:} The
following lemma gives asymptotically the local minimizers for both the
implicit and explicit time integration schemes:
\begin{lemma}\label{lem:equioscpqnooverlap}
In the implicit case, when $\kmm= C_h\wmm$, there exist
$\bar{p}^*_1\pr \kmm^\frac14$, $\bar{q}^*_1\pr \kmm^{-\frac34}$
such that
$$
\begin{cases}
|\rho_0(\bz_{sw}(p,q),p,q)|=
|\rho_0(\bz_1(p,q),p,q)|=
|\rho_0(z_3,p,q)|)
& \mbox{ if }  \frac{p}{q} < \wmm,\\
|\rho_0(\bz_{sw}(p,q),p,q)|=
|\rho_0(\bz_n(p,q),p,q)|=
|\rho_0(z_3,p,q)|)
& \mbox{ if }  \frac{p}{q} > \wmm.\\
\end{cases}
$$
Defining $Q_0=\frac{2}{C_hx_{sw}}$, the coefficients are given
asymptotically by
$$
\bar{q}^*_1\sim \frac{2p}{x_{sw}\kmm},
\quad
\bar{p}^*_1\sim
\begin{cases}
\sqrt[4]{x_{sw}^3 \nu\kmm}
&\mbox{if $Q_0 > 1$},\\
\sqrt[4]{\frac{8\nu x_{sw}\wmm}{P(Q_0)^2}}.
&\mbox{if $Q_0 < 1$}.
\end{cases}
$$
In the explicit case, when $\wmm=\frac{1}{\pi C_h}\kmm^2$, there exist
$\bar{p}^*_1\pr \kmm^\frac14$, $\bar{q}^*_1\pr \kmm^{-\frac34}$
such that
 $$
|\rho_0(\bz_{sw}(p,q),p,q)|=
|\rho_0(\bz_1(p,q),p,q)|=
|\rho_0(\bz'n,p,q)|).
$$
The coefficients are given by
$$
\bar{q}^*_1 \sim \frac{2C p}{ x_{sw} \kmm },
\quad
\bar{p}^*_1\sim
 \sqrt[4]{\frac{\nu x_{sw}^3\kmm}{C}}.
$$

\end{lemma}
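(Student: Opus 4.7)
The strategy is to use the asymptotic expansions \eqref{eq:expimplicit} and \eqref{eq:expexplicit} from Lemma \ref{lem:localmaxpqnooverlap} to convert the equioscillation conditions into an algebraic system in the two unknowns $(p,q)$, solve it asymptotically, and finally verify a posteriori that the solution indeed lies in the regime $p\pr \kmm^{1/4}$, $q\pr\kmm^{-3/4}$ where those expansions are valid. Recall that this regime satisfies the hypothesis $\alpha+\beta\le 1$ with $\alpha=1/4$, $\beta=3/4$ from \eqref{eq:asumptionpq0}.

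For the implicit case $\kmm=C_h\wmm$, I would first equate $|\rho_0(\bz_{sw},p,q)|$ and $|\rho_0(z_3,p,q)|$, which yields the linear relation $q\sim 2p/(x_{sw}\kmm)$. Substituting back into the ratio $Q:=q\wmm/p$ produces the constant $Q_0=2/(C_h x_{sw})$, so the parameter that decides which branch of $P$ and which point $\bz_n$ is active is fixed purely by the geometry. The case $p/q<\wmm$ is then equivalent to $Q_0>1$, and in this regime one equates instead $|\rho_0(\bz_{sw})|=|\rho_0(\bz_1)|$, which combined with the previous relation gives $p^4\sim \nu x_{sw}^3\kmm$ and $q\sim 2p/(x_{sw}\kmm)$. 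In the complementary case $Q_0<1$, the point $\bz_n$ takes over the role of $\bz_1$ as the northern obstruction, and equating $|\rho_0(\bz_{sw})|$ with the asymptotic $|\rho_0(\bz_n)|\sim 1-\frac{p}{\sqrt{2\nu\wmm}}P(Q_0)$ from \eqref{eq:rhonord} produces the alternate formula with $P(Q_0)^2$ in the denominator. In each sub-case I would check that the third candidate in the triple indeed gives a strictly smaller asymptotic contribution, so that the two-equations-in-two-unknowns reduction is genuine.

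The explicit case $\wmm=\kmm^2/(\pi C_h)$ follows the same template, with $\bz'_n$ from \eqref{eq:bzprimen} replacing either $z_3$ or $\bz_n$ depending on whether $d\gtrless d_0$, and its asymptotic $|\rho_0(\bz'_n)|\sim 1-\frac{4C}{q\kmm}\sqrt{d/2}$ has the same $1/(q\kmm)$ structure as the $z_3$ contribution in the implicit case. Equating this to $|\rho_0(\bz_{sw})|$ yields $q\sim 2Cp\sqrt{d/2}/(x_{sw}\kmm)$, and equating $|\rho_0(\bz_{sw})|$ to $|\rho_0(\bz_1)|$ gives the additional relation $p^3q\sim 2\nu x_{sw}^2$; solving simultaneously produces the stated powers of $\kmm$ (absorbing the $\sqrt{d/2}$ factor into the constant).

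The main obstacle is bookkeeping rather than analysis: one must check in each of the sub-cases listed in \eqref{eq:pointnord} that the candidate point we \emph{discard} from the triple really does give a strictly smaller loss $1-|\rho_0|$ at the computed $(\bar p_1^*,\bar q_1^*)$, and that the threshold $Q_0=1$ (resp.\ $d=d_0$, and the internal threshold $g_1$ determining which branch of $P$ is used) separates the two asymptotic regimes consistently. Once these verifications are in place, the relations $p\pr \kmm^{1/4}$, $q\pr \kmm^{-3/4}$ follow directly from the closed-form expressions, confirming the ansatz and closing the argument.
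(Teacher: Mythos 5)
Your proposal follows essentially the same route as the paper's own proof: a two-step elimination in which the equioscillation between $\bz_{sw}$ and the $1/(q\kmm)$-type point ($z_3$ in the implicit case, $\bz'_n$ in the explicit case) first yields $\hat q(p)\sim 2p/(x_{sw}\kmm)$ (up to the $C\sqrt{d/2}$ factor), after which the remaining equioscillation with $\bz_1$ or $\bz_n$ determines $p$, with the dichotomy $Q_0\gtrless 1$ matching $p/q\lessgtr\wmm$ exactly as in the paper. The closed-form solutions and the a posteriori check that $\bar p_1^*\pr\kmm^{1/4}$, $\bar q_1^*\pr\kmm^{-3/4}$ (hence $\alpha+\beta=1$) coincide with the paper's argument, so the proposal is correct and not materially different.
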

\begin{proof}
In each asymptotic regime for $\kmm$ and $\wmm$, we proceed in two steps:
\begin{itemize}
\item In the implicit case, $\wmm=\frac{1}{ C_h}\kmm$:
\begin{enumerate}
  \item For $p$ such that $p \pr \kmm^{\alpha}$, $\alpha <
    \frac12$, consider the equation
$$
  |\rho_0(\bz_{sw},p,q)|-|\rho_0(z_3,p,q)|=0,
$$
with the unknown $q$. By the expansions \eqref{eq:expexplicit}, we see
that for any $q\pr\kmm^{-\beta}$, $\frac12 < \beta < 1$,
$$
  |\rho_0(\bz_{sw},p,q)|-|\rho_0(z_3,p,q)|\sim
  \frac{4}{q\kmm}-2\frac{x_{sw}}{p},
$$
which can take positive or negative values according to the sign of
the right hand side. Therefore it vanishes for $q=\hat{q}(p)$, with
\begin{equation}\label{eq:qfunctionofpnooverlap}
  \hat{q}(p)\sim\frac{2p}{x_{sw} \kmm}\,.
\end{equation}
We verify that $\hat{q}(p)\pr \kmm^{-\beta}$, $\frac12 < \beta < 1$.

\item Consider now for large $\kmm$ and $Q_0 > 1$ the equation in the
  $p$-variable,
$$
  |\rho_0(\bz_{sw},p,\hat{q}(p))|-|\rho_0(\bz_1,p,\hat{q}(p))|=0.
$$
By the asymptotic expansions above, for $q=\hat{q}(p)$,
$$
  |\rho_0(\bz_{sw},p,q)|-|\rho_0(\bz_1,p,q)|\sim
  2\left(\sqrt{\frac{pq }{2\nu }}-\frac{x_{sw}}{p}\right)\sim
  2\left(p\sqrt{\frac{1}{x_{sw}\kmm }}-\frac{x_{sw}}{p}\right).
$$
This quantity takes positive or negative values, and vanishes for
a $\bar{p}^*_1$ with
$$
  \bar{p}^*_1\sim \sqrt[4]{x_{sw}^3 \nu\kmm}.
$$
Consider alternatively for $Q_0 < 1$  the equation in the $p$-variable,
$$
  |\rho_0(\bz_{sw},p,\hat{q}(p))|-|\rho_0(\bz_n,p,\hat{q}(p))|=0.
$$
By the asymptotic expansions above, for $q=\hat{q}(p)$,
$$
  |\rho_0(\bz_{sw},p,q)|-|\rho_0(\bz_n,p,q)|\sim \frac{p}{\sqrt{2\nu\wmm}}
    P(Q_0)-2\frac{x_{sw}}{p}.
$$
Again, this quantity vanishes for a $\bar{p}^*_1$ with
$$
  \bar{p}^*_1\sim \sqrt[4]{\frac{8\nu x_{sw}^2\wmm}{P(Q_0)^2}}.
$$
\end{enumerate}
\item In the explicit case, $\wmm=\frac{1}{\pi C_h}\kmm^2$:

\begin{enumerate}
\item We first solve, for fixed $p$, the equation in $q$,
\[
|\rho(\bz_{sw}(p,q),p,q)|-|\rho(\bz'_n(p,q),p,q)|=0.
\]
By the expansions in \eqref{eq:expexplicit},
$$
  |\rho(\bz_{sw}(p,q),p,q)|-|\rho(\bz'_n(p,q),p,q)|\sim
    \frac{4C}{ q \kmm }\sqrt{\frac{d}{2}}-2\frac{x_{sw}}{p},
$$
and $|\rho(\bz_{sw}(p,q),p,q)|-|\rho(\bz'_n(p,q),p,q)|$ vanishes for
$$
  q=\hat{q}(p)\sim \frac{2C p}{ x_{sw} \kmm }\sqrt{\frac{d}{2}}.
$$
\item We solve now for $q=\hat{q}(p)$, the equation
$$
  |\rho(\bz_{sw}(p,q),p,q)|-|\rho(\bz_{1}(p,q),p,q)|=0,
$$
whose asymptotic behavior is
$$
  |\rho(\bz_{sw}(p,q),p,q)|-|\rho(\bz_{1}(p,q),p,q)|\sim
    2\sqrt{\frac{pq}{2\nu}}-2\frac{\bx_{sw}}{p}\sim
    2p\sqrt{\frac{C}{\nu x_{sw}\kmm}\sqrt{\frac{d}{2}}}-2\frac{x_{sw}}{p}.
$$
By the same arguments as before,
$|\rho(\bz_{sw}(p,q),p,q)|-|\rho(\bz_{1}(p,q),p,q)|$ vanishes for
$$
\bar{p}^*_1\sim\sqrt[4]{\frac{\nu x_{sw}^3\kmm}{C}\sqrt{\frac{2}{d}}}.
$$
\end{enumerate}
\end{itemize}
We have now proved that there exist in all cases coefficients $p$ and $q$
satifying the relations in the lemma.  They satisfy $\bar{p}^*_1\pr
\kmm^\frac14$, $\bar{q}^*_1\pr \kmm^{-\frac34}$, and are therefore conforming
to the previous study with $\alpha +\beta =1$.
\end{proof}

It remains to show that this is indeed a strict local minimum for the
function $F_0$. By the same argument as in the Robin case, we can
prove that for $\delta p$ and $\delta q$ sufficiently small and
$p=\bar{p}_1^*+\delta p$, $q=\bar{q}_1^*+\delta q$,

$$
F_0(p,q)-F_0(\bar{p}_1^*,\bar{q}_1^*)=
\max_{\mu}((\delta p\,
\partial_{\tp}+\delta q\,
\partial_{\tq})|\rho_0(\bz_\mu,\bar{p}_1^*,\bar{q}_1^*)|)+
\po(\delta p,\delta q),
$$
where the points $\bz_\mu$ are those involved in the maximum: if
$\wmm\pr\kmm $, $\bz_{sw}$ and $z_3$ in any case, and either $\bz_n$
or $\bz_1$, and if $\wmm\pr\kmm^2 $, $\bz_{sw}, \,\bz_n'$ and $\bz_1$.

Therefore, $(\bar{p}^*_1,\bar{q}^*_1)$ is a strict local minimum of
$F_0(p,q)$ if and only if for any $(\delta p ,\delta q)$, there exists
a $\bz_\mu$ such that $(\delta p\, \partial_{\tp}+\delta q\,
\partial_{\tq})R_0(\bz_\mu,\bar{p}_1^*,\bar{q}_1^*) > 0$.  To analyze
this quantity, we rewrite the convergence factor in the form
$$
R_0=\frac{\phi-\psi}{\phi+\psi}, \mbox{ with }
\begin{cases}
\phi=\tq^2|Z|^2+2\tp\tq X+\tp^2+|z|^2,\\
\psi=2x(\tp+\tq|z|^2).
\end{cases}
$$
This allows us to write the derivatives in the more elegant form
$$
R_0'=\frac{\psi\phi'-\psi'\phi}{(\phi+\psi)^2},
$$
and at an extremum, $R_0={\delta_1^*}^2$ implies that
$\psi/\phi=\zeta:=\frac{1-(\delta_1^*)^2}{1+(\delta_1^*)^2}$, and
$$
R_0'=\frac{\zeta\phi'-\psi'}{(1+\zeta)^2\phi}.
$$
We therefore obtain
\begin{eqnarray*}
(\delta p\,
\partial_{\tp}+\delta q\,
\partial_{\tq})R_0(\bz_\mu,\bar{p}_1^*,\bar{q}_1^*) & = &
  \frac{\zeta\partial_{\tp}\phi
    -\partial_{\tp}\psi}{(1+\zeta)^2\phi}\delta p
  + \frac{\zeta\partial_{\tq}\phi
    -\partial_{\tq}\psi}{(1+\zeta)^2\phi}\delta q\\
  & = & \frac{2(\zeta(\tp+\tq X)- x)}{(1+\zeta)^2\phi}\delta p
  + \frac{2(\zeta(\tp X+\tq |Z|^2)- x|z|^2)}{(1+\zeta)^2\phi}\delta q\\
  & =: & \frac{2\Phi(\bz_\mu,\delta p,\delta q)}{(1+\zeta)^2\phi}.
\end{eqnarray*}
We now study the asymptotic behavior of $\Phi$ for the two cases of interest:
\begin{itemize}
\item If $\wmm\pr\kmm $, then
$$
\begin{array}{rcl}
\Phi(\bz_1,\delta p,\delta q)&\sim&
-\bx_1(\delta p  + \frac{\tp}{\tq} \delta q),\\
\Phi(\bz_n,\delta p,\delta q)&\sim&
-\bx_n(\delta p  +M Y_0\, \delta q ),\\
\Phi(\bz_{sw},\delta p,\delta q)&\sim&
x_{sw}(\delta p  +(x_{sw}^2-3y_{sw}^2)\delta q) ,\\
\Phi(z_3,\delta p,\delta q)&\sim&
2\nu\kmm( \delta p  +(2\nu\kmm)^2\delta q ).
\end{array}
$$
where $M$ is given by
$$
M=
\begin{cases}
1 &\mbox{ if } Q_0=\frac{2}{C_hx_{sw}} > g_1,\\
\sqrt{1+\left(t_2(Q_0)\right)^2}
 &\mbox{ if } Q_0< g_1.
 \end{cases}
$$
Therefore, $(\bar{p}^*_1,\bar{q}^*_1)$ is a strict local minimum of
$F_0(p,q)$ if and only if the union of the following set equals
$\R^2$:
$$
\begin{array}{c}
{\cal E}_1=\{(\delta p,\delta q), - (\delta p  + M\kmm \delta q)>0\},\\
{\cal E}_2=\{(\delta p,\delta q),\delta p  +(2\nu\kmm)^2\delta q >0\},\\
{\cal E}_3=\{(\delta p,\delta q),\delta p  +(x_{sw}^2-3y_{sw}^2)\delta q>0 \}.
 \end{array}
$$

The domains are shown in Figure \ref{fig:minstrictpq}: for large
$\kmm$, the slopes of $D_1$, $\delta p + M\kmm \delta q=0$ and $D_2$,
$\delta p +(2\nu\kmm)^2\delta q =0$ are such that ${\cal E}_1
\cup{\cal E}_2$ is $\R^2$ excluding a small angle $\breve{\cal
  E}=\{\delta q < 0, \, -M\kmm \delta p < \delta q < (2\nu\kmm)^2
\delta q\}$. If $ x_{sw}^2-3y_{sw}^2 < 0$, ${\cal E}_3$ contains the
whole quadrant $\delta p > 0, \ \delta q < 0$. If $ x_{sw}^2-3y_{sw}^2
> 0$, the slope of $D_3$, $\delta p +(x_{sw}^2-3y_{sw}^2)\delta q
=0$, is $\go(1)$, so that ${\cal E}_3$ contains $\breve{\cal E}$.
\begin{figure}
  \centering
  \psfrag{dp}{$\delta p$}
  \psfrag{dq}{$\delta q$}
  \psfrag{d1}{$D_1$}
  \psfrag{d2}{$D_2$}
  \psfrag{d3}{$D_3$}
  \begin{tabular}{c c  }
   \subfloat[$x_{sw}^2-3y_{sw}^2 < 0$]{
\includegraphics[width=0.45\textwidth]{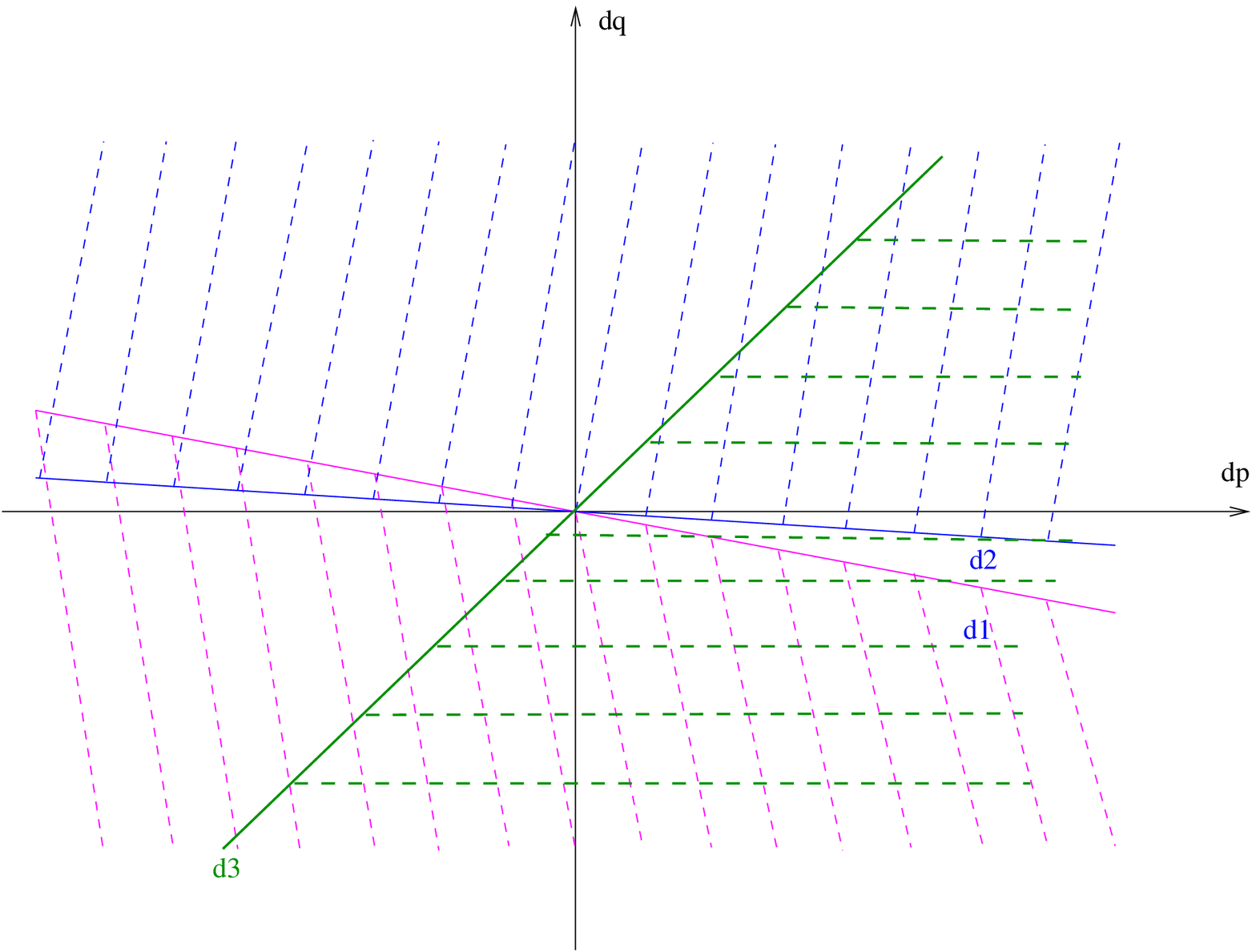}
 \label{fig:minstrictpq2}}
&
\subfloat[$x_{sw}^2-3y_{sw}^2 > 0$]{
\includegraphics[width=0.45\textwidth]{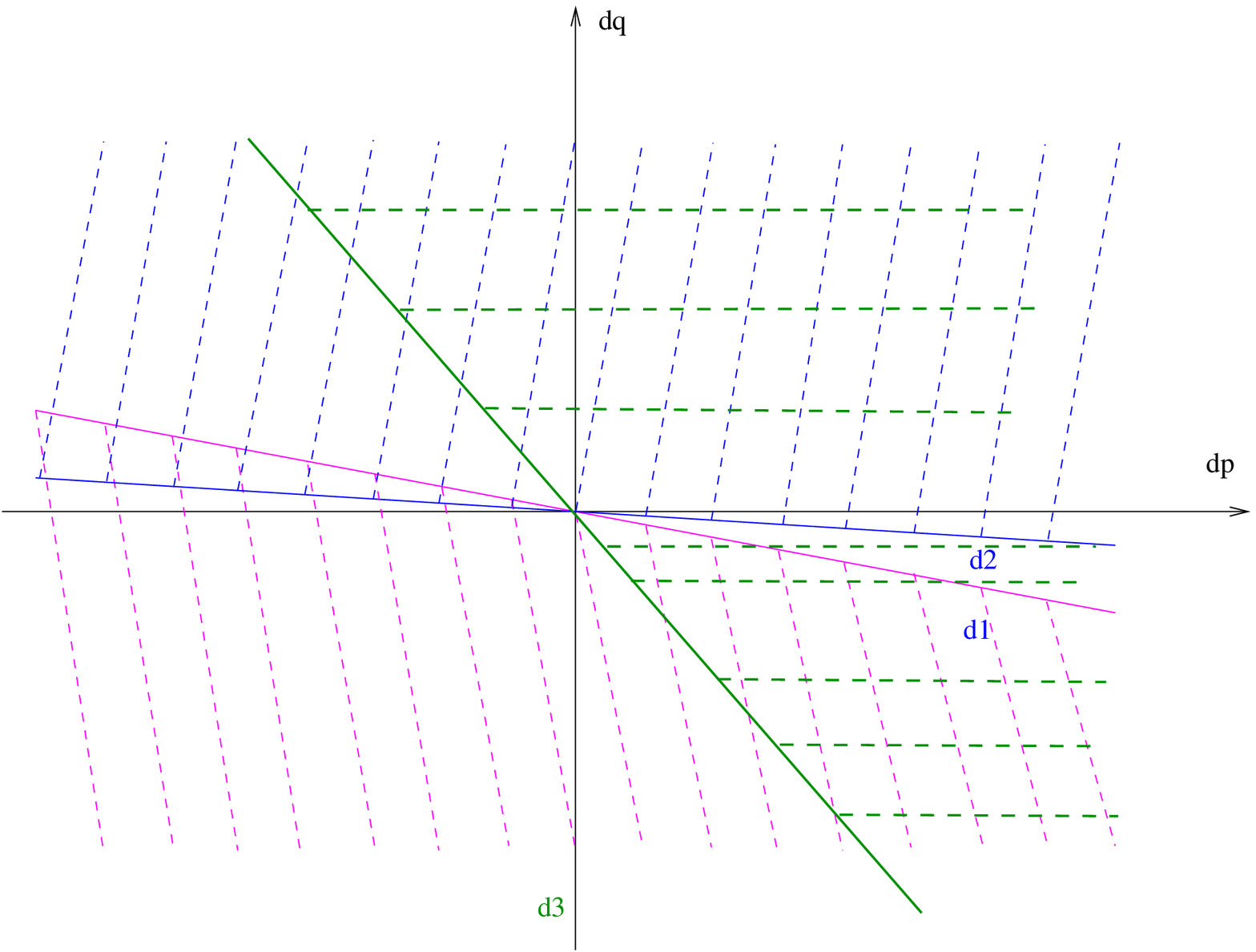}
 \label{fig:minstrictpq1}}
\end{tabular}
\caption{Description of the analysis for $\wmm\pr\kmm$}
\label{fig:minstrictpq}
\end{figure}

\item If $\wmm=\frac{\nu\kmm^2}{d}$, then the asymptotics for
  $\bz_{sw}$ and $\bz_1$ remain unchanged. The asymptotics for $\bz_n'$
  become
  $$
  \Phi(\bz_n',\delta p,\delta q) \sim
\begin{cases}
\sqrt{2\nu\wmm}(-\delta p  + 4\nu\wmm \delta q)
&\mbox{ if } d < d_0\\
\frac{2\nu C\kmm}{\sqrt{2d}}((2d-\sqrt{d^2+1})\delta p  + 4\frac{d^2+1}{d}(\nu\kmm)^2 \delta q)
&\mbox{ if } d > d_0.
\end{cases}
$$
If $ d < d_0$, the situation is the same as in Figure
\ref{fig:minstrictpq}. If $ d > d_0$, we obtain the conclusion as
indicated in Figure \ref{fig:minstrictpq}.
\begin{figure}
  \centering
  \psfrag{dp}{$\delta p$}
 \psfrag{dq}{$\delta q$}
  \psfrag{d1}{$D_1$}
   \psfrag{d2}{$D_2$}
    \psfrag{d3}{$D_3$}
 \begin{tabular}{c c  }
  \subfloat[$x_{sw}^2-3y_{sw}^2 < 0$]{
\includegraphics[width=0.45\textwidth]{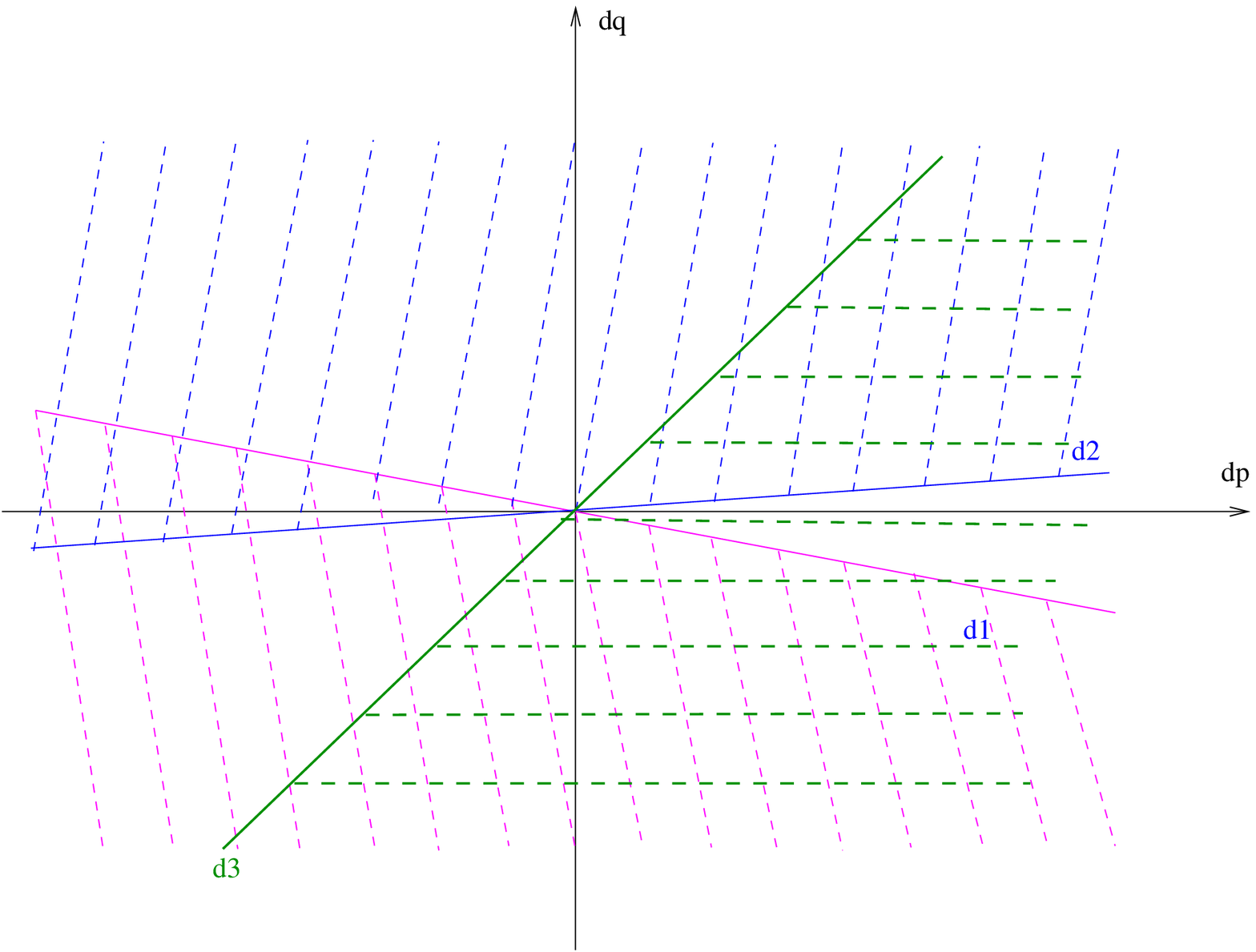}
 \label{fig:minstrictpq3}}
&
\subfloat[$x_{sw}^2-3y_{sw}^2 > 0$]{
\includegraphics[width=0.45\textwidth]{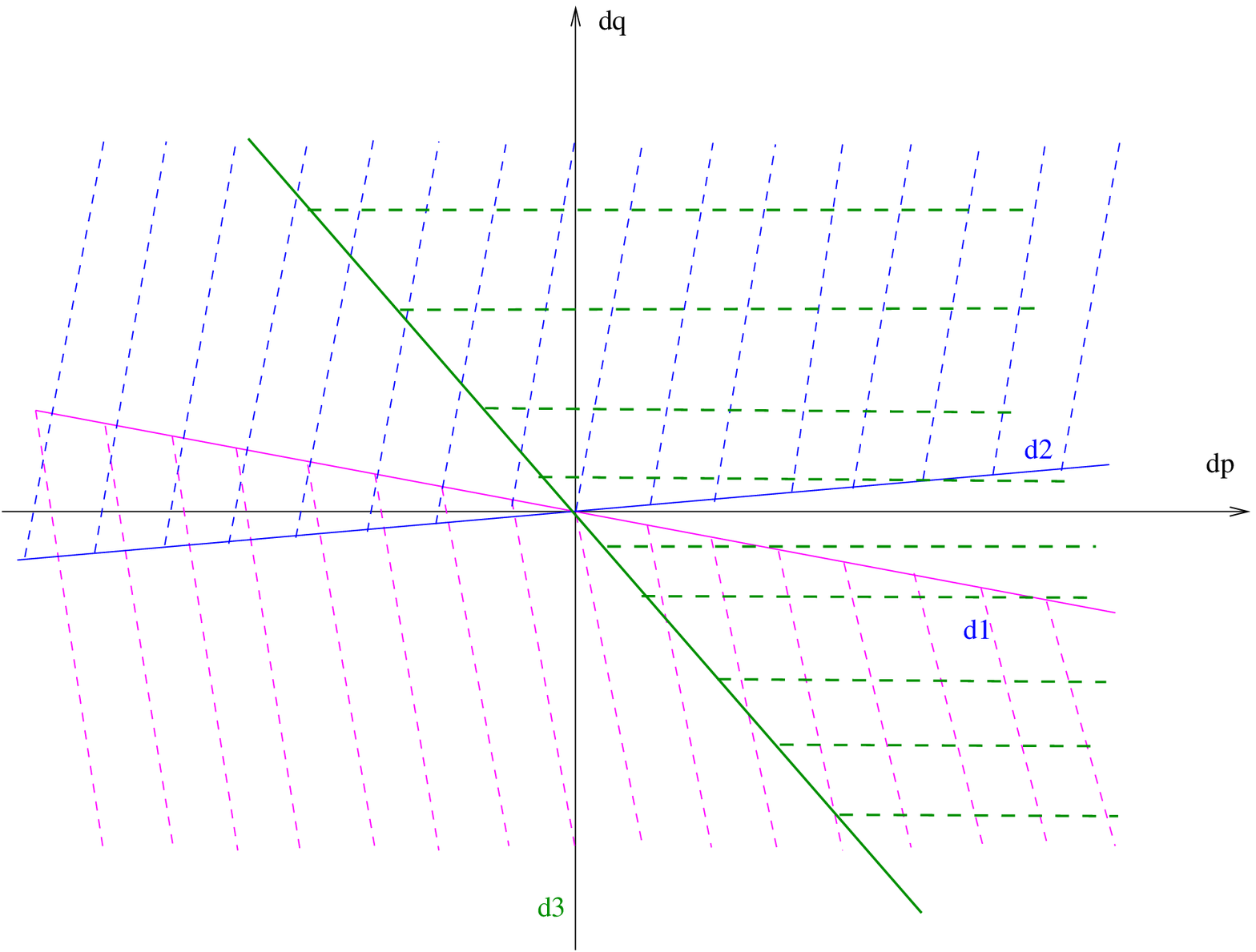}
 \label{fig:minstrictpq1b}}
\end{tabular}
\caption{Description of the analysis in the case $\wmm=\frac{\nu\kmm^2}{d}$ with $d > d_0$}
\label{fig:minstrictpqb}
\end{figure}
\end{itemize}

\subsection{The Overlapping Case}

We follow along the same lines as in the Robin case, starting with the
infinite case where only $L$ is involved.  Denoting by $\ell:=L/2\nu$
as before to simplify the notation, we obtain for the derivatives of
the convergence factor
 $$
\begin{array}{rcl}
R(\w,k,p,q,L)&=&R_0(\w,k,p,q)e^{-2\ell x},\\
\partial_{{\w},k} R(\w,k,p,q,L)&=&\partial_{{\w},k} R_0(\w,k,p,q)-
2\ell\partial_{\w}x R_0(\w,k,p,q)\\
&=&
\frac{
4\Re(N(z,\bar{z})\,(\partial_{\w,k}x + i \partial_{\w,k}y))
-2\ell \partial_{\w}x |(\tp+\tq z^2)^2-z^2|^2
}{|\tp+\tq z^2+z|^4}\\
&=&
4\frac{
 (\Re N(z,\bar{z})-\frac{\ell}{2}M)\, \partial_{\w,k}x -
  \Im N(z,\bar{z}) \partial_{\w,k}y
}{|\tp+\tq z^2+z|^4},\\
\end{array}
$$
with $M=|(\tp+\tq z^2)^2-z^2|^2$.

\paragraph{Proof of Theorem \ref{th:toutopq} (Ventcel Conditions with
  Overlap, Continuous):} we solve the min-max problem
on the infinite domain $\td_+^\infty$. By the abstract Theorem
\ref{th:genoverlappq}, for sufficiently small $L$, the problem has a
solution.  We need to prove that $F_L$ has a strict local minimum,
which will again be achieved by equioscillation. The proof consists of
two steps, shown in the following lemmas:

\begin{lemma}[Local Extrema]\label{lem:localmaxpqoverlapinf}
Suppose $p\pr \kmm^\alpha$, $q\pr \kmm^\beta$, $0<\alpha < \frac12
<\beta < 1$, $\alpha+\beta < 1$. Then,
$$
  \sup_{\td^\infty}|\rho(z,p,q,L)| = \max(
|\rho(\bz'_{sw}(p,q),p,q,L)|,
|\rho(\bz'_1(p,q),p,q,L)|,
|\rho(\bz^"_1(p,q),p,q,L)|),
$$
where  $\bz'_{sw}\sim z_{sw}$. The two other points belong to $\ccwi$, with
 $$
 \begin{array}{ll}
 \bw_1'\sim \frac{\tp}{4\nu\tq},
& |\rho(\bz_1',p,q,L)|\sim 1-2\sqrt{2\tp\tq},\\
 \bw_1^"\sim \frac{2}{\ell\tq}, &
|\rho(\bz_1^",p,q,L)|\sim 1- 2\sqrt{\frac{\ell}{\tq}}.
\end{array}
$$
\end{lemma}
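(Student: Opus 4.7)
The plan is to follow the same strategy as in the overlapping Robin analysis in Section~\ref{sec:overlap}, adapting the non-overlapping Ventcel computation of Lemma~\ref{lem:localmaxpqnooverlap}. The boundary of $\td_+^\infty$ has only two pieces, the unbounded hyperbola arc $\ccwi$ (fixed $k=\km$, $\omega\in[\wm,+\infty)$) and the compact arc $\ccsw$ at $\w=\wm$. I would first verify that the supremum is attained by checking that $R(\w,\km,p,q,L)=R_0 e^{-2\ell x}\to 0$ as $\w\to\infty$ along $\ccwi$, since $x\sim\sqrt{2\nu\w}\to\infty$ dominates the polynomial blow-up coming from $R_0$.

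On the compact curve $\ccsw$, I would use a perturbation argument: the factor $e^{-2\ell x}$ is smooth, positive, and bounded on $\ccsw$, so for $\ell$ small the critical-point analysis from Lemma~\ref{lem:localmaxpqnooverlap} carries over by continuity, yielding a unique maximum $\bz'_{sw}(p,q,\ell)$ asymptotic to $z_{sw}$ with $|\rho(\bz'_{sw},p,q,L)|\sim 1-2x_{sw}/\tp$.

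On $\ccwi$, the key step is to compute $\partial_\w R(\w,\km,p,q,L)$ by adding to the non-overlap expression $\Phi_\omega$ the contribution $-2\ell(\partial_\w x) R_0$. After factoring out $y$ and substituting $\xi=2x^2-X$ with $X=\xoo+4\nu^2\km^2$, this produces a polynomial $\widetilde{Q}(\xi)$ of degree higher than three, equal to the cubic $Q_3(\xi)$ of \eqref{eq:defQ3} plus an overlap correction proportional to $\ell$. Under the assumptions $\alpha+\beta<1$ and $\ell$ small with $\ell\tp\ll 1$, the asymptotic coefficient analysis locates the real roots: two of them persist from $Q_3$ at $\xi_0\sim \tp^2$ (local minimum) and $\xi_1\sim \tp/\tq$ (local maximum, giving $\bz'_1$ with $\bw_1'\sim \tp/(4\nu\tq)$), while the overlap term produces a new large root near $\xi\sim 4/(\ell\tq)$ corresponding to a further local maximum at $\bw_1''\sim 2/(\ell\tq)$. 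Consistency of the ordering $\bw_1'\ll \bw_1''$ follows from $\ell\tp\ll 1$, so both maxima lie in the unbounded part of $\ccwi$.

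Finally, evaluating $|\rho|^2=|\tp+\tq z^2-z|^2/|\tp+\tq z^2+z|^2\cdot e^{-2\ell x}$ at each critical point yields the announced asymptotics: at $\bz'_1$, where $x\pr 1/\sqrt{\tq}$ and $\ell x\to 0$, the overlap factor contributes at subleading order and the rational factor reproduces the non-overlap value $1-2\sqrt{2\tp\tq}$; at $\bz''_1$, where $x\sim\sqrt{2/(\ell\tq)}$, the rational factor and the exponential factor balance at leading order and give $1-2\sqrt{\ell/\tq}$. Comparing with $|\rho(\bz'_{sw},p,q,L)|\sim 1-2x_{sw}/\tp$, these are the only three candidate maxima, which gives the lemma. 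The main technical obstacle is the careful bookkeeping of the $\xi$-polynomial with the overlap correction and verification that no other critical point intrudes, particularly controlling which of the roots of the higher-degree perturbation correspond to maxima as opposed to minima of $R$; this is handled by tracking the sign of $\widetilde{Q}$ between consecutive roots in the asymptotic regime.
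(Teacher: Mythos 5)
Your plan follows essentially the same route as the paper: on the unbounded west boundary $\ccwi$ the paper also writes $\partial_\w R \propto y\,Q_4(\xi)$ with $Q_4=Q_3-\tfrac{\ell}{2}\tq^4\xi^4$, a singular perturbation of the cubic $Q_3$ from \eqref{eq:defQ3} whose extra large root $\xi_1''\sim 2/(\ell\tq)$ produces the new maximum $\bz_1''$, and on the compact curve $\ccsw$ it argues, as you do, that the overlap term is a regular perturbation (there $M=\go(1)$, so $\Phi_k^\ell\sim-\frac{|z|^2}{2\nu}\tp^3\partial_k x$), so the non-overlapping analysis persists and gives $\bz_{sw}'\sim z_{sw}$. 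The asymptotic values you obtain for $|\rho|$ at the three points agree with the paper's, up to immaterial constant factors.

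One bookkeeping point needs repair: the quartic $Q_4$ has \emph{four} positive roots,
$$
\xi_0'\sim\tp^2\ \ll\ \xi_1'\sim\frac{\tp}{\tq}\ \ll\ \xi_2'\sim\frac{1}{\tq^2}\ \ll\ \xi_1''\sim\frac{2}{\ell\tq},
$$
alternating minimum, maximum, minimum, maximum of $R$. Your sketch lists only three critical points (a minimum at $\tp^2$, a maximum at $\tp/\tq$, and the new overlap-induced root claimed to be a maximum). Since $Q_4$ has negative leading coefficient, $Q_4(0)<0$, and $R\to0$ as $\w\to\infty$, two consecutive maxima without the intervening minimum at $\xi\sim 1/\tq^2$ is impossible; that root (which already belongs to $Q_3$ and is pushed into the relevant range on the unbounded domain) must be restored for the sign pattern, and hence the identification of $\bz_1''$ as a maximum, to be consistent. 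With that correction your argument matches the paper's proof.
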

\begin{proof}
We make the assumptions on the coefficients $p$ and $q$ in
\eqref{eq:asumptionpq0}. We start with the variations of $R$ on the
west boundary, \textit{i.e.} as a function of $\w$ for $k=\km$:
$$
\begin{array}{rcl}
\partial_{{\w}} R(\w,k,p,q,L)&=&
 8\nu
 \ds\frac{\Phi_\w^\ell}{|z|^2|\tp+\tq z^2+z|^4},\\[3mm]
\Phi_\w^\ell&=&
\ds\Phi_\w -\frac{\ell}{2}M y.
\end{array}
$$
We rewrite $M$ in terms of $\xi$ as in \eqref{eq:defQ3}, using $ Y\sim
\xi$,
$$
M= |(\tp+\tq X+i\tq Y)^2-X-iY|^2
\sim |(\tp^2-\tq^2 Y^2-X) +iY(2(\tp+\tq X)\tq-1)|^2
\sim tp^4+\tq^4 Y^4 + Y^2
\sim
\tq^4\xi^4+\xi^2+\tp^4,
$$
and we obtain
$$
  \Phi_\w^\ell\sim yQ_4:= y( -\frac{\ell}{2} \tq^4\xi^4+Q_3).
$$
The fourth-order polynomial $Q_4$ is a singular perturbation of $Q_3$
defined in \eqref{eq:defQ3}. The roots are therefore perturbations of
those already defined, with in addition $\xi^"_1$, whose principal part
solves
$$
 \tq^3 \xi^3
- \frac{\ell}{2}\tq^4\xi^4
=0.
$$
By the same argument as before, $Q_4$ has four roots,
\[
1 \ll \xi'_0\sim  \tp^2  \ \ll
\xi'_1\sim \frac{\tp}{ \tq}  \ \ll
\xi'_2\sim \frac{1}{ \tq^2}\ll
\xi^"_1\sim\frac{2}{\ell\tq},
\]
and $\partial_\w R(\w,k,p,q)$ has, in addition to $\w=-c\km$, four
zeros $\omega_0'$, $\omega_1'$, $\omega_2'$ and $\omega_1"$,
equivalent to the corresponding $\xi/4\nu$.  $\xi'_0 $ and $\xi'_2$
correspond to minima of $R$, while $\bz_1'=z(\bw_1',\soc \km)$ and
$\bz_1^"=z(\bw_1^",\soc \km)$ correspond to maxima. At the maxima
we have $Y\sim\xi$, $X=\go(1)$, and $z\sim \sqrt{\xi}(1+i)$, which implies
for the convergence factor
$$
  |\rho(\bz_1',p,q,L)|\sim 1-2\sqrt{2\tp\tq},\quad
    |\rho(\bz_1^",p,q,L)|\sim 1- 2\sqrt{\frac{\ell}{\tq}}.
$$
If $\ac\km > \wm$, the local extrema are $z_1$, $\bz_1'$ and
$\bz_1^"$.  If $\ac\km < \wm$, we must take $\ccsw$ into account. We
use the results derived in the nonoverlapping case to obtain
$$
\begin{array}{rcl}
\partial_{{k}} R(\w,k,p,q,L)&=&8\nu
\frac{ \Phi_k^\ell }{|\tp+\tq z^2+z|^4|z|^2 },\\
\Phi_k^\ell
&=&  \Phi_k -\frac{\ell}{2} \partial_{\w}x |(\tp+\tq z^2)^2-z^2|^2\\
&=&  (\Re N(z,\bar{z})-\frac{\ell}{2}M) \partial_{k}x
-\Im N(z,\bar{z}) \partial_{k}y.
\end{array}
$$
By the results in the previous section, since $M=\go(1)$,
$$
\Phi_k^\ell
\sim
-\frac{|z|^2}{2\nu}\partial_kx(\tp^3+4\frac{\ell}{2}M)\\
\sim  -\frac{|z|^2}{2\nu}\partial_kx,
$$
if $\partial_kx \ne 0$. By Corollary \ref{cor:tangentes}, if
$|\tk_1(\wm)|\le\km$, $\partial_k x$ does not change sign in the
interval, and thus $|\rho|$ is a decreasing function of $x$. If
$|\tk_1(\wm)| \in(\km,\wm/\ac)$, $\partial_k x$ changes sign at
$k=\tk_1$, and therefore $\partial_k |\rho|^2$ changes sign for a
point $\bk_3'$ in the neighbourhood of $\tk_1(\wm)$, which produces a
maximum at $\bz_3'=z(\wm,\bk_3')$. We thus define
$$
\bz_{sw}'=
\begin{cases}
z_1 &\mbox{if } |c\km| < \wm,\\
z_1 &\mbox{if } |c\km| >  \wm \mbox{ and } |\bk_3'| \not\in [\km,\frac{\wm}{\ac}], \\
\bz_3' \sim\tz_1(\wm) &\mbox{if } |c\km| >  \wm \mbox{ and } |\bk_3' | \in [\km,\frac{\wm}{\ac}],
\end{cases}
$$
and obtain for the convergence factor
$$
\sup_{\ccsw} |\rho(z,p,q,L)|=|\rho(\bz_{sw}',p,q,L)|\sim 1-2\frac{\bx_{sw}'}{\tp}\sim 1-2\frac{x_{sw}}{\tp}.
$$
We can therefore conclude that
\[
\sup_{z\in\td_+} |\rho(z,p,q,L)|=\max(|\rho(\bz_{sw}',p,q,L)|,
|\rho(\bz_1',p,q,L)|,|\rho(\bz_1^",p,q,L)|).
\]
\end{proof}

\begin{lemma}[Local Minimum for $F_L(p,q)$]\label{lem:equioscpqoverlapinfini}
There exist $\bar{p}^*_\infty\pr \kmm^\frac15$, $\bar{q}^*_\infty\pr
\kmm^{-\frac35}$ such that
$$
  |\rho(\bz_{sw}^",p,q,L)|=|\rho(\bz_1',p,q,L)|=|\rho(\bz_1^",p,q,L)|.
$$
The coefficients are given asymptotically by
$$
\bar{p}^*_\infty\sim \sqrt[5]{\frac{x_{sw}^4}{2\ell}},
\quad
\bar{q}^*_\infty\sim 4\nu \frac{x_{sw}^2}{2\tp^3}\sim 4\nu\sqrt[5]{\frac{\ell^3}{4x_{sw}^2}},
\quad
\delta_\sim 1-2\sqrt[5]{2\ell x_{sw}}.
$$
\end{lemma}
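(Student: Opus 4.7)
The plan is to mirror the non-overlapping Ventcel argument of Lemma \ref{lem:equioscpqnooverlap}, now using the three local maxima identified in Lemma \ref{lem:localmaxpqoverlapinf}, namely $\bz_{sw}'$, $\bz_1'$ and $\bz_1''$, whose moduli satisfy asymptotically
$$
|\rho(\bz_{sw}',p,q,L)|\sim 1-\tfrac{2x_{sw}}{\tp},\quad
|\rho(\bz_1',p,q,L)|\sim 1-2\sqrt{2\tp\tq},\quad
|\rho(\bz_1'',p,q,L)|\sim 1-2\sqrt{\tfrac{\ell}{\tq}}.
$$
I will solve the equioscillation system in two stages. First, for fixed $\tp$, I equate the second and third moduli, which asymptotically reduces to $2\tp\tq^2\sim\ell$ and yields the one-parameter relation $\hat\tq(\tp)\sim\sqrt{\ell/(2\tp)}$. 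Substituting this into $|\rho(\bz_{sw}',p,q,L)|=|\rho(\bz_1'',p,q,L)|$, i.e.\ $x_{sw}/\tp\sim\sqrt{\ell/\hat\tq(\tp)}$, I arrive at the quintic $\tp^5\sim x_{sw}^4/(2\ell)$. Inverting delivers $\bar{p}^*_\infty\sim\sqrt[5]{x_{sw}^4/(2\ell)}$, and back-substitution together with $q=4\nu\tq$ gives $\bar{q}^*_\infty\sim 4\nu\sqrt[5]{\ell^3/(4x_{sw}^2)}$ and the common equioscillation value $\delta^*_{1,\infty}\sim 1-2\sqrt[5]{2\ell x_{sw}}$, as claimed.

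Before proceeding, I verify that the candidate $(\bar{p}^*_\infty,\bar{q}^*_\infty)$ is compatible with the standing hypotheses of Lemma \ref{lem:localmaxpqoverlapinf}. Since $\tp\pr\ell^{-1/5}$ and $\tq\pr\ell^{3/5}$, one has $\tp\tq\pr\ell^{2/5}\to 0$ and $\ell\tp\pr\ell^{4/5}\to 0$ as $\ell\to 0$, so every smallness condition used in the three asymptotic expansions holds a posteriori; the exponents $\alpha=1/5$, $\beta=3/5$ satisfy $0<\alpha<\tfrac{1}{2}<\beta<1$ and $\alpha+\beta=\tfrac{4}{5}<1$, as required (with $\ell$ playing the role of the small parameter in the infinite-domain setting).

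The step I expect to be the main obstacle is proving that $(\bar{p}^*_\infty,\bar{q}^*_\infty)$ is actually a \emph{strict} local minimum of $F_L$. Proceeding as in the non-overlapping Ventcel proof, I expand
$$
F_L(p,q)-F_L(\bar{p}^*_\infty,\bar{q}^*_\infty)=\max_{\mu\in\{sw,1',1''\}}\bigl((\delta p\,\partial_{\tp}+\delta q\,\partial_{\tq})|\rho(\bz_\mu,\bar{p}^*_\infty,\bar{q}^*_\infty,L)|\bigr)+\po(|(\delta p,\delta q)|),
$$
exploiting that by construction the $(\omega,k)$-derivatives vanish at each equioscillation point. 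Using the compact identity $R_0'=(\zeta\phi'-\psi')/((1+\zeta)^2\phi)$ from the non-overlapping proof, the three gradient vectors can be evaluated explicitly, the overlap factor $e^{-2\ell x}$ contributing only a bounded multiplicative correction. Strict local minimality then reduces to a two-dimensional convex-geometric check, in the spirit of Figures \ref{fig:minstrictpq} and \ref{fig:minstrictpqb}: the three open half-planes
$$
\mathcal{E}_\mu=\{(\delta p,\delta q):(\delta p\,\partial_{\tp}+\delta q\,\partial_{\tq})|\rho(\bz_\mu)|>0\}
$$
must together cover $\R^2\setminus\{0\}$. A direct computation with $\bz_{sw}'=\go(1)$, $\bz_1'\sim(1+i)\sqrt{\tp/(2\tq)}$ and $\bz_1''\sim(1+i)\sqrt{1/(\ell\tq)}$ shows that the three normal directions are in generic asymptotic position in the $(\delta p,\delta q)$-plane, so the covering property holds. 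Once strict local minimality is secured, the uniqueness part of Theorem \ref{th:genoverlappq} (non-compact case, $L$ sufficiently small) identifies $(\bar{p}^*_\infty,\bar{q}^*_\infty)$ with the unique global minimizer $(p^*_{1,\infty}(L),q^*_{1,\infty}(L))$, completing the proof.
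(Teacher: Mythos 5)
Your proposal is correct and follows essentially the same route as the paper: equioscillating the three asymptotic values $1-2x_{sw}/\tp$, $1-2\sqrt{2\tp\tq}$ and $1-2\sqrt{\ell/\tq}$ to obtain the relations $2\tp^3\tq\sim x_{sw}^2$ and $\ell\tp^2/\tq\sim x_{sw}^2$ (you merely eliminate $\tq$ in a different order), then checking a posteriori that the resulting exponents $\alpha=\tfrac15$, $\beta=\tfrac35$ satisfy the standing hypotheses. Your sketch of strict local minimality via the half-plane covering argument is exactly the argument the paper defers to ("analogous to the nonoverlapping case"), so it is a welcome addition rather than a deviation.
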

\begin{proof}
We skip the arguments which are similar to those of the previous
section, and show only the computation of the parameters.  Since
$$
|\rho(\bz_{sw}^",p,q,L)|-|\rho(\bz_1',p,q,L)|
\sim 2(\sqrt{2\tp\tq}-\frac{x_{sw}}{\tp}),\quad
|\rho(\bz_{sw}^",p,q,L)|-|\rho(\bz_1^",p,q,L)|
\sim  2\left( \sqrt{\frac{\ell}{\tq}}-\frac{x_{sw}}{\tp}\right),
$$
we must have asymptotically
$$
2\tp^3\tq\sim x_{sw}^2,\quad
\ell\frac{\tp^2}{\tq}\sim x_{sw}^2.
$$
which gives the formulas in the lemma. Notice that they have the
announced asymptotic behavior $\bar{p}^*_\infty=\gol{-\frac15}$, $
\bar{q}^*_\infty=\gol{ \frac35}$, validating the computations made
above.  We finally recover the results in the Lemma by returning to
the original variables $p$ and $q$.
\end{proof}

The proof that $\bar{p}^*_\infty$, $ \bar{q}^*_\infty$ is a strict
local minimum of $F_L$ is analogous to that in the nonoverlapping case
and therefore we omitted it. Then by the abstract Theorem
\ref{th:genoverlappq}, we found the global minimum, and the proof of
Theorem \ref{th:toutopq} is complete.

\paragraph{Proof of Theorem \ref{th:toutoboundedpq} (Ventcel Conditions with
  Overlap, Discrete):} the
existence and uniqueness for the min-max problem is again covered by
the abstract theorem. We thus only need to show the local maxima in
the convergence factor, and the strict local minimizer for $F_L(p,q)$,
which is done in the following two lemmas:

\begin{lemma}[Local Maxima of $R$ on $\td$]\label{lem:localmaxpqoverlapbounded}
Suppose $p\pr \kmm^\alpha$, $q\pr \kmm^\beta$, $0<\alpha < \frac12
<\beta < 1$, $\alpha+\beta < 1$.  Then, if $\wmm\pr\kmm^2$, we have
 $$
  \sup_{z\in \td}|\rho(z,p,q,L)| =
    \max(|\rho(\bz'_{sw}(p,q),p,q,L)|,|\rho(\bz'_1(p,q),p,q,L)|,
    |\rho(\bz^"_1(p,q),p,q,L)|).
$$
If $\wmm\pr\kmm$, then
\[
  \sup_{z\in\td} |\rho(z,p,q,L)|=\max(|\rho(\bz_{sw}'(p,q),p,q,L)|,
    |\rho(\bz_1'(p,q),p,q,L)|,|\rho(\bz_4'(p,q),p,q,L)|),
\]
where $\bz_4'(p,q) \in \ccn$ is such that
$$
|\rho(\bz_4'(p,q),p,q)\sim 1- 2 \sqrt{\frac{2}{\ell\tq}}.
$$
\end{lemma}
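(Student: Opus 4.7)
The plan is to extend the analyses of Lemma \ref{lem:localmaxpqnooverlap} (non-overlapping Ventcel) and Lemma \ref{lem:localmaxpqoverlapinf} (overlapping Ventcel on the unbounded domain) to the bounded discrete domain $\td$, using the modified derivatives $\Phi_\w^\ell = \Phi_\w - \frac{\ell}{2} M y$ and $\Phi_k^\ell = \Phi_k - \frac{\ell}{2} M \partial_k x$, where $M = |(\tp+\tq z^2)^2 - z^2|^2$ encodes the overlap contribution, and where $\ell \pr \kmm^{-1}$ since $L \pr h$. The logic is to study $R$ on each of the boundary arcs $\ccw$, $\ccsw$, $\cce$, $\ccn$, $\ccse$ of $\td_+$ separately and then match the resulting candidate maxima.

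On $\ccw$ and $\ccsw$, the analysis is inherited verbatim from Lemma \ref{lem:localmaxpqoverlapinf}: the quartic $Q_4$ in $\xi$ gives the two maxima $\bw_1' \pr \kmm^{\alpha-\beta+1}$ and $\bw_1^" \pr \kmm^{1+\beta}$, and on $\ccsw$ the analysis is unchanged and yields $\bz'_{sw}$. The novelty is simply that one must check which of $\bw_1'$, $\bw_1^"$ actually lies below $\wmm$. Since $\alpha < \beta$ always, $\bw_1'$ stays below $\wmm$ in both regimes. For $\bw_1^"$, the condition $1+\beta < \log_{\kmm}\wmm$ holds when $\wmm \pr \kmm^2$ but fails when $\wmm \pr \kmm$, explaining why $\bz_1^"$ appears in the first case and disappears in the second.

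On the new boundaries $\cce$ and $\ccse$ (where $X \pr \kmm^2 \gg Y$), I would use \eqref{eq:coeffgoinf} together with $M \sim \tq^4 X^4$ to show that $\Re N - \frac{\ell}{2}M \sim \tq^3 X^3(1 - \frac{\ell\tq X}{2})$, so any interior zero on these curves sits at $X \pr 1/(\ell\tq)$ and is outside $[\kmm^2, \wmm^2]$ in both regimes; the supremum on $\cce \cup \ccse$ is then at $z_3$, and a direct comparison shows it is dominated by one of the points already retained. The central new work is on $\ccn$: I would re-run the five-scale case analysis $k \pr \kmm^\theta$ of Lemma \ref{lem:localmaxpqnooverlap}, now tracking the overlap term. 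In the regime $\wmm \pr \kmm^2$, the condition $\alpha + \beta < 1$ ensures that all candidate maxima from the nonoverlap analysis remain subdominant to $\bz_1'$ and $\bz_1^"$. In the regime $\wmm \pr \kmm$, the key step is to detect the new root of $\Phi_k^\ell$ produced by the overlap: balancing $\Phi_k \sim -2\nu k x \tq Y^2$ against $-\frac{\ell}{2} M \partial_k x \sim -\frac{\ell}{2}\tq^4 Y^4 \partial_k x$ leads to a root at the scale $2\nu k \pr \sqrt{2/(\ell\tq)}$, defining $\bz'_4$, for which $z \sim \sqrt{2/(\ell\tq)}(1+i)$ and $\tp + \tq z^2 \sim i\tq z^2$, yielding after insertion into $\rho$ the announced estimate $|\rho(\bz'_4,p,q,L)| \sim 1 - 2\sqrt{2/(\ell\tq)}$.

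The main obstacle is that last scale analysis on $\ccn$ in the regime $\wmm \pr \kmm$. Under $\alpha + \beta < 1$ the nonoverlap $\Phi_k$ is controlled by several competing monomials across the different scales of $k$, and I must show that adding the overlap term shifts the relevant zero out of the negligible minimum regime of the non-overlapping case into a genuine maximum, without the subleading corrections in either $\Re N$ or $\Im N \cdot \partial_k y$ disturbing the balance. Once $\bz'_4$ is properly identified and its convergence factor evaluated, the comparison with $\bz'_{sw}$ and $\bz'_1$ is routine: $|\rho(\bz'_{sw})| \sim 1 - 2 x_{sw}/\tp$ and $|\rho(\bz'_1)| \sim 1 - 2\sqrt{2\tp\tq}$ are of the same order as $|\rho(\bz'_4)|$ only at the equilibration parameters, which are dealt with in the companion equioscillation lemma, so at the level of this lemma it suffices to retain the three competing candidates.
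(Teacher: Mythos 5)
Your overall strategy is the same as the paper's: inherit the $\ccw$, $\ccsw$ analysis from the unbounded overlapping case, check which of the critical frequencies $\bw_1'\sim\tp/(4\nu\tq)$ and $\bw_1''\sim 2/(\ell\tq)\pr\kmm^{1+\beta}$ actually lie below $\wmm$ (your conclusion here agrees with the paper: $\bz_1''$ survives only for $\wmm\pr\kmm^2$), dismiss the east, and locate the new overlap-induced maximum on $\ccn$ when $\wmm\pr\kmm$. The existence/comparison logic at the end is also the paper's.

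However, the one step you yourself identify as "the main obstacle" --- the identification and evaluation of $\bz_4'$ on $\ccn$ --- is exactly where your argument breaks down, and it is the only genuinely new content of the lemma. On $\ccn$ the quantity $Y=4\nu(\wmm+ck)\sim 4\nu\wmm$ is essentially \emph{constant} and the running variable is $X=\xoo+4\nu^2k^2$; at the relevant scale $k\pr\kmm^{(1+\beta)/2}$ one has $X\gg Y$, so $M=|(\tp+\tq z^2)^2-z^2|^2\sim\tq^4X^4$ and $\Re N\sim\tq^3X^3$, and the balance $\tq^3X^3\sim\frac{\ell}{2}\tq^4X^4$ gives $X\sim 2/(\ell\tq)$. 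Your proposed balance of $-2\nu kx\,\tq Y^2$ against $-\frac{\ell}{2}\tq^4Y^4\partial_kx$ is a transplant of the $\ccw$ computation (where $X$ is bounded and $Y$ is the large variable); it contains no equation determining $k$ and does not yield the scale you assert. Consequently your description of the point is also wrong: since $X\gg Y$ there, $z\sim\sqrt{2/(\ell\tq)}$ is essentially \emph{real}, not $\sqrt{2/(\ell\tq)}\,(1+i)$, and $\tp+\tq z^2\sim\tq X=2/\ell$ is real, not $i\tq z^2$. Carrying out the evaluation correctly gives $\rho_0\sim 1-2z/s\sim 1-\sqrt{2\ell/\tq}$ and $e^{-\ell x}\sim 1-\sqrt{2\ell/\tq}$, hence $|\rho(\bz_4')|\sim 1-2\sqrt{2\ell/\tq}$ --- note that the exponent as printed in the statement, $1-2\sqrt{2/(\ell\tq)}$, is a typo (it tends to $-\infty$), and the corrected value is the one actually used in the subsequent equioscillation, which yields $2\ell\tp^2/\tq\sim x_{sw}^2$. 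Your computation, even with your (incorrect) location of $\bz_4'$, does not reproduce either formula with a consistent constant. A minor further point: with overlap the supremum on $\cce$ is not at $z_3$ (the term $-\frac{\ell}{2}My$ dominates $\Phi_\w$ there and reverses the monotonicity), but this is harmless since $x\ge 2\nu\kmm$ on $\cce\cup\ccse$ forces $|\rho|\le e^{-2\nu\ell\kmm}$, uniformly bounded away from $1$, so the east never competes.
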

\begin{proof}
We have already computed the extrema on $\ccsw$ and $\ccwi$. For the
west boundary $\ccw$, we need to check if the computed values are
indeed inside the bounded domain.  With the assumptions on $p$ and
$q$, the first maximum on $\ccwi$ is at $\w'_1\sim \frac{\tp}{\tq} \ll
\wmm$. The second maximum is at $\w^"_1\sim \frac{1}{4\nu\ell\tq} \pr
\kmm^{1+\beta}$. It belongs to $\ccw$, if $\wmm\pr\kmm^2$. In the other
case, the minimum at $\xi'_2$ does not belong either to $\ccw$, and
$$
  \sup_{\ccw}|\rho(z,p,q,L)|=max(|\rho(z_1,p,q,L)|,|\rho(\bz'_1,p,q,L)|).
$$
We compute now the local extrema on the curve $\ccn$, treating again
the two cases of interest:
\begin{itemize}
\item If $\wmm\pr\kmm^2$, the term $-\ell M$ dominates in the
  derivative, so that
  $$
    \Phi_k^\ell \sim-\frac{\ell}{2}M\partial_k x,
  $$
  and $R$ is a decreasing function of $x$ on $\ccn$.

\item If $\wmm\pr\kmm$, then we have the cases
\begin{itemize}
  \item[$\checkmark$] If $k=\go(\kmm)$, $\frac{l}{2}M \pr \yo$, $Re
    N(z,\bar{z})\sim -\tp^3-\tq^2\yo^2 \gg \yo$. Therefore the
    computations from the nonoverlapping case are valid. According
    to \eqref{eq:pointnord}, since $\frac{\tq\yo}{\tp} \gg 1$, there
    is no maximum for $k =\go(\kmm)$.

  \item[$\checkmark$] If $k\pr\kmm^\theta$, $\frac12 < \theta < 1$,
    $M\sim X^2(\tq^2X-1)^2$, and
    $$
    \begin{array}{rcl}
     \Phi_k^\ell&\sim& 2\nu k\sqrt{X}(\Re N(z,\bar{z})-\frac{l}{2} X^2(\tq^2X-1)^2)\\
     &\sim& 2\nu kX^{\frac32}(-\frac{\ell}{2}\tq^4 X^3+\tq^3X^2-\tq X+\tp).
\end{array}
$$
The polynomial on the right hand side is a singular perturbation of
the polynomial in $\Phi_k$, $\tq^3X^2-\tq X+\tp$, and it has
asymptotically the following two roots:
$$
 \frac{1}{\tq^2} \ll \frac{2}{\ell\tq}.
$$
The first one corresponds to a minimum, the second one to a
maximum. Therefore the overlap creates a new local maximum, $\bk_4'
\sim \frac{\soc}{2\nu}\sqrt{\frac{2}{\ell\tq}}$. The convergence
factor is in this case
$$
|\rho(\bk_4',\wmm,p,q)\sim 1- 2 \sqrt{\frac{2}{\ell\tq}}
$$
\end{itemize}
\end{itemize}
Hence we found all the possible maxima, and
\[
\sup_{z\in\td_+} |\rho(z,p,q,L)|=\max(|\rho(\bz_{sw}',p,q,L)|,
|\rho(\bz_1',p,q,L)|,|\rho(\bz_4',p,q,L)|).
\]
\end{proof}

\begin{lemma}[local minimum for $F_L(p,q)$]\label{lem:equioscpqoverlap}
There exist $\bar{p}^*_L\pr \kmm^\frac15$, $\bar{q}^*_L\pr
\kmm^{-\frac35}$ such that the three values in Lemma
\ref{lem:localmaxpqoverlapbounded} coincide. The coefficients and
associated convergence factor are given asymptotically by
$$
\bar{p}^*_L\sim
\begin{cases}
\sqrt[5]{\frac{x_{sw}^4}{2\ell}},
&\mbox{ if $\wmm\pr\kmm^2$},\\
\sqrt[5]{\frac{x_{sw}^4}{4\ell}},
&\mbox{ if $\wmm\pr\kmm$}
\end{cases}
,\quad
\bar{q}^*_L\sim 4\nu \frac{x_{sw}^2}{2\tp^3},\quad
\ds\sup_{z\in\td}|\rho(z,\bar{p}^*_L,\bar{q}^*_L,L)|\sim 1-2\sqrt[5]{4\ell x_{sw}}.
$$
\end{lemma}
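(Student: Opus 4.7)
My plan is to characterize $(\bar p_L^*,\bar q_L^*)$ as the simultaneous solution of two pairwise equioscillation equations among the three local maxima identified in Lemma \ref{lem:localmaxpqoverlapbounded}, then verify that this candidate is a strict local minimizer of $F_L$ on $\R_+\times\R_+$, and finally invoke the abstract Theorem \ref{th:genoverlappq} to promote it to the unique global minimum. The two asymptotic regimes $\wmm\pr\kmm^2$ and $\wmm\pr\kmm$ have to be handled separately because the identity of the third competing maximum differs.

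If $\wmm\pr\kmm^2$, Lemma \ref{lem:localmaxpqoverlapbounded} produces the same three maxima $\bz'_{sw},\bz'_1,\bz^"_1$ as the continuous-overlap case, with identical leading asymptotics $1-2x_{sw}/\tp$, $1-2\sqrt{2\tp\tq}$ and $1-2\sqrt{\ell/\tq}$. The equioscillation system then coincides exactly with that of Lemma \ref{lem:equioscpqoverlapinfini}, so its solution is $\bar p_L^*\sim\bar p_\infty^*=\sqrt[5]{x_{sw}^4/(2\ell)}$ and $\bar q_L^*\sim 4\nu x_{sw}^2/(2\bar p_L^{*3})$, which is the first line of the announced formula.

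If $\wmm\pr\kmm$, the third competing maximum is $\bz'_4\in\ccn$. I would first confirm by direct computation that at $\bz'_4$ one has $x\sim\sqrt{2/(\ell\tq)}$, and that both the modulus contribution $2x/(\tq|z|^2)\sim\sqrt{2\ell/\tq}$ and the overlap contribution $\ell x\sim\sqrt{2\ell/\tq}$ enter at the same order, yielding $|\rho(\bz'_4,p,q,L)|\sim 1-2\sqrt{2\ell/\tq}$. Equating $|\rho(\bz'_{sw})|$ with $|\rho(\bz'_1)|$ gives $\tq\sim x_{sw}^2/(2\tp^3)$, while equating $|\rho(\bz'_{sw})|$ with $|\rho(\bz'_4)|$ gives $\tq\sim 2\ell\tp^2/x_{sw}^2$; eliminating $\tq$ leaves $\tp^5\sim x_{sw}^4/(4\ell)$, and returning to the original variables $p$ and $q=4\nu\tq$ produces the stated formulas. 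An a posteriori check confirms $\tp\pr\kmm^{1/5}$, $\tq\pr\kmm^{-3/5}$, which is consistent with the standing hypothesis $\alpha+\beta<1$ of Lemma \ref{lem:localmaxpqoverlapbounded}, validating the asymptotic regime used above.

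It remains to show that $(\bar p_L^*,\bar q_L^*)$ is a strict local minimum of $F_L$ and to conclude. I would repeat verbatim the argument given after Lemma \ref{lem:equioscpqnooverlap} for the non-overlapping Ventcel case: express $F_L(\bar p_L^*+\delta p,\bar q_L^*+\delta q)-F_L(\bar p_L^*,\bar q_L^*)$ as the maximum of the directional derivatives $(\delta p\,\partial_{\tp}+\delta q\,\partial_{\tq})|\rho(\bz_\mu,\bar p_L^*,\bar q_L^*)|$ over the three equioscillation points, cast each derivative in the compact form $2\Phi(\bz_\mu,\delta p,\delta q)/((1+\zeta)^2\phi)$ used earlier, and check that the corresponding three ascent half-planes in $(\delta p,\delta q)$ cover $\R^2$. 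I expect the main obstacle to be exactly this last geometric check in the regime $\wmm\pr\kmm$: the slopes that appear now involve the overlap scales attached to $\bz'_4$ rather than the Robin-like $z_3$ and $\bz_n$ of Figures \ref{fig:minstrictpq}--\ref{fig:minstrictpqb}, so one must redo the sign analysis of the three linear functionals $\Phi(\bz_\mu,\cdot,\cdot)$ from scratch and discuss again the position of the line $D_3$ according to the sign of $x_{sw}^2-3y_{sw}^2$. Once this is settled, Theorem \ref{th:genoverlappq} identifies $(\bar p_L^*,\bar q_L^*)=(p_1^*(L),q_1^*(L))$, and the formula $\delta_1^*(L)\sim 1-2x_{sw}/\bar p_L^*=1-2\sqrt[5]{4\ell x_{sw}}$ follows by substitution.
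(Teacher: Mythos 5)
Your proposal is correct and follows essentially the same route as the paper: the case $\wmm\pr\kmm^2$ reduces to the continuous analysis of Lemma \ref{lem:equioscpqoverlapinfini}, and for $\wmm\pr\kmm$ the same two equioscillation equations yield $2\tp^3\tq\sim x_{sw}^2$ and $2\ell\tp^2/\tq\sim x_{sw}^2$, hence $\tp^5\sim x_{sw}^4/(4\ell)$ and $\delta\sim 1-2\sqrt[5]{4\ell x_{sw}}$. You in fact supply slightly more than the paper, which skips the strict-local-minimum verification here; your reading of the asymptotics of $|\rho(\bz_4',p,q,L)|$ as $1-2\sqrt{2\ell/\tq}$ (correcting the evident typo in Lemma \ref{lem:localmaxpqoverlapbounded}) agrees with the relation the paper actually uses.
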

\begin{proof}
We skip the arguments which are similar to those previously, and retain
only the conclusion.  The case $\wmm\pr\kmm^2$ is like in the previous
analysis. In the other case, we prove as before that there exist
$\bar{p}^*_L$ and $\bar{q}^*_L$ which solve the two equations
$$
  |\rho(\bz_{sw}^",p,q,L)|-|\rho(\bz_1',p,q,L)|=0,\quad
    |\rho(\bz_{sw}^",p,q,L)|-|\rho(\bz_4',p,q,L)|=0.
$$
The first one is the same as in the infinite case, providing the relation
$$
  2\tp^3\tq\sim x_{sw}^2,
$$
and the second one becomes
$$
  |\rho(\bz_{sw}^",p,q,L)|-|\rho(\bz_1^",p,q,L)|
    \sim  2\left( \sqrt{\frac{2\ell}{\tq}}-\frac{x_{sw}}{\tp}\right),
$$
which provides the relation
$$
  2\ell\frac{\tp^2}{\tq}\sim x_{sw}^2,
$$
and the solution
$$
\ds\tp_L\sim 2^{-\frac15}\tp_\infty ,\quad
\tq\sim 2^{\frac35}\tq_\infty.
\sup_{z\in\td_+} |\rho(z,p,q,L)|\sim 1-2\sqrt[5]{4\ell x_{sw}}.
$$
\end{proof}

We can conclude now the proof of Theorem \ref{th:toutoboundedpq} as in
the other cases.

\section{Numerical experiments}

We now present a substantial set of numerical experiments in order to
illustrate the performance of the optimized Schwarz waveform
relaxation algorithm, both for cases where our analysis is valid, and
for more general decompositions. We work on the domain
$\Omega=(0,1.2)\times(0,1.2)$ and chose for the coefficients in
(\ref{eq:ModelProblem}) $\nu=1$, ${\bf a}=(1, 1)$ and $b = 0$, and the
time interval length $T=1$. We discretized the problem using Q1 finite
elements and simulate directly the error equations, $f=0$, and start
with a random initial error, to make sure all frequencies are present,
see \cite{Gander:2008:SMO} for a discussion of the importance of
this. We use as the stopping criterion the relative residual reduction
to $10^{-6}$.  We start with the case of an implicit time integration
method (Backward Euler), where one can choose $\Delta
t=\frac{h}{4}$. We show in Table \ref{Table1} the number of iterations
needed by the various Schwarz waveform relaxation algorithms for the
case of non-overlapping decompositions.
\begin{table}
   \centering
\begin{tabular}{|c|c|ccccc|ccccc|}
\hline
\multicolumn{2}{|c|}{}& \multicolumn{5}{c|}{Iterative}&
\multicolumn{5}{c|}{GMRES}\\
\hline
\multicolumn{2}{|c|}{h} & 0.04 & 0.02 & 0.01 & 0.005 & 0.0025 & 0.04 &
0.02 & 0.01 & 0.005 & 0.0025 \\
\hline
\hline
\multirow{4}{*}{Robin} &2x1  & 49  & 71  & 97 & 144  & 198  & 23 & 29 &
36 & 45 & 55 \\
&2x2  & 53 & 74 & 101 & 145 & 202 & 30 & 38 & 48 & 59 & 73 \\
&4x1  & 52 & 72 & 101 & 140 & 204 & 30 & 40 & 50 & 63 & 78\\
&4x4  & 81 & 116 & 160 & 219 & 303 & 47 & 64 & 84 & 107 & 133\\
\hline
\multirow{4}{*}{Ventcell} &2x1  & 13 & 15 & 18 & 21 & 24 & 10 & 12 & 14
& 16 & 18 \\
&2x2  & 23 & 29 & 39 & 48 & 63 & 16 & 19 & 22 & 25 & 29 \\
&4x1  & 18 & 21 & 25 & 29 & 35 & 14 & 17 & 20 & 24 & 27 \\
&4x4  & 30 & 37 & 44 & 54 & 65 & 22 & 28 & 34 & 40 & 46\\
\hline
\end{tabular}
    \caption{Number of iterations for an implicit time discretization
      setting $\Delta t=\frac{h}{4}$, algorithms without overlap}
    \label{Table1}
\end{table}
We first note that the algorithms work also very well for
decompositions into more than two subdomains, and the optimized
parameters we derived are also very effective in that case. For
example for a decomposition into $4\times4$ subdomains and a high
mesh resolution, the Ventcell conditions need about 5 times less
iterations than the Robin conditions for convergence, and the cost per
iteration is virtually the same. 

In Table \ref{Table2}, we show the corresponding results for the
overlapping algorithms, using an overlap of $2h$. 
\begin{table}
\centering
\begin{tabular}{|c|c|ccccc|ccccc|}
\hline
\multicolumn{2}{|c|}{}& \multicolumn{5}{c|}{Iterative}&
\multicolumn{5}{c|}{GMRES}\\
\hline
\multicolumn{2}{|c|}{h} & 0.04 & 0.02 & 0.01 & 0.005 & 0.0025 & 0.04 &
0.02 & 0.01 & 0.005 & 0.0025 \\
\hline
\hline
\multirow{4}{*}{Robin} &2x1  & 12 & 14 & 16 & 19 & 23 & 8 & 10 & 12 & 14
& 17 \\
&2x2  & 14 & 17 & 21 & 27 & 33 & 11 & 14 & 17 & 20 & 24 \\
&4x1  & 14 & 15 & 18 & 23 & 29 & 11 & 13 & 16 & 20 & 24 \\
&4x4  & 19 & 24 & 32 & 41 & 52 & 14 & 20 & 26 & 32 & 40 \\
\hline
\multirow{4}{*}{Ventcell} &2x1  & 9 & 10 & 11 & 12 & 13 & 6 & 7 & 8 & 9
& 10 \\
&2x2  & 12 & 14 & 17 & 20 & 23 & 8 & 10 & 11 & 13 & 16 \\
&4x1  & 12 & 11 & 11 & 14 & 16 & 10 & 9 & 9 & 11 & 13 \\
&4x4  & 16 & 17 & 19 & 24 & 29 & 13 & 13 & 14 & 18 & 22 \\
\hline
\multirow{4}{*}{Classical} &2x1  & 54 & 106 & 189 & 360 & 733 & 27 & 40
& 58 & 83 & 117 \\
&2x2  & 84 & 159 & 303 & 570 & 1058 & 37 & 56 & 82 & 118 & 166  \\
&4x1  & 73 & 145 & 282 & 553 & 969 & 38 & 60 & 89 & 127 & 179\\
&4x4  & 127 & 258 & 487 & 912 & 1706 & 54 & 94 & 143 & 209 & 296 \\
\hline
\end{tabular}
    \caption{Number of iterations for an implicit time discretization
      setting $\Delta t=\frac{h}{4}$, algorithms with overlap $2h$}
\label{Table2}
\end{table}
We see that overlap greatly enhances the convergence of the
algorithms, as predicted by our analysis. At a high mesh resolution,
the number of iterations on the $4\times 4$ example can be reduced by
a factor of 6 using overlap in the case of Robin conditions, and by a
further factor of 2 when optimized Ventcell conditions are used.

We illustrate our asymptotic results now in Figure \ref{Figure1} by
plotting in dashed lines the iteration numbers from Table \ref{Table1}
and \ref{Table2} in log-log scale, and we add the theoretically
predicted growth of the iteration numbers.
\begin{figure}
  \centering
  \psfrag{iteration number}[][]{\footnotesize interation number}
  \psfrag{h}[][]{\footnotesize $h$}
  \includegraphics[width=0.48\textwidth]{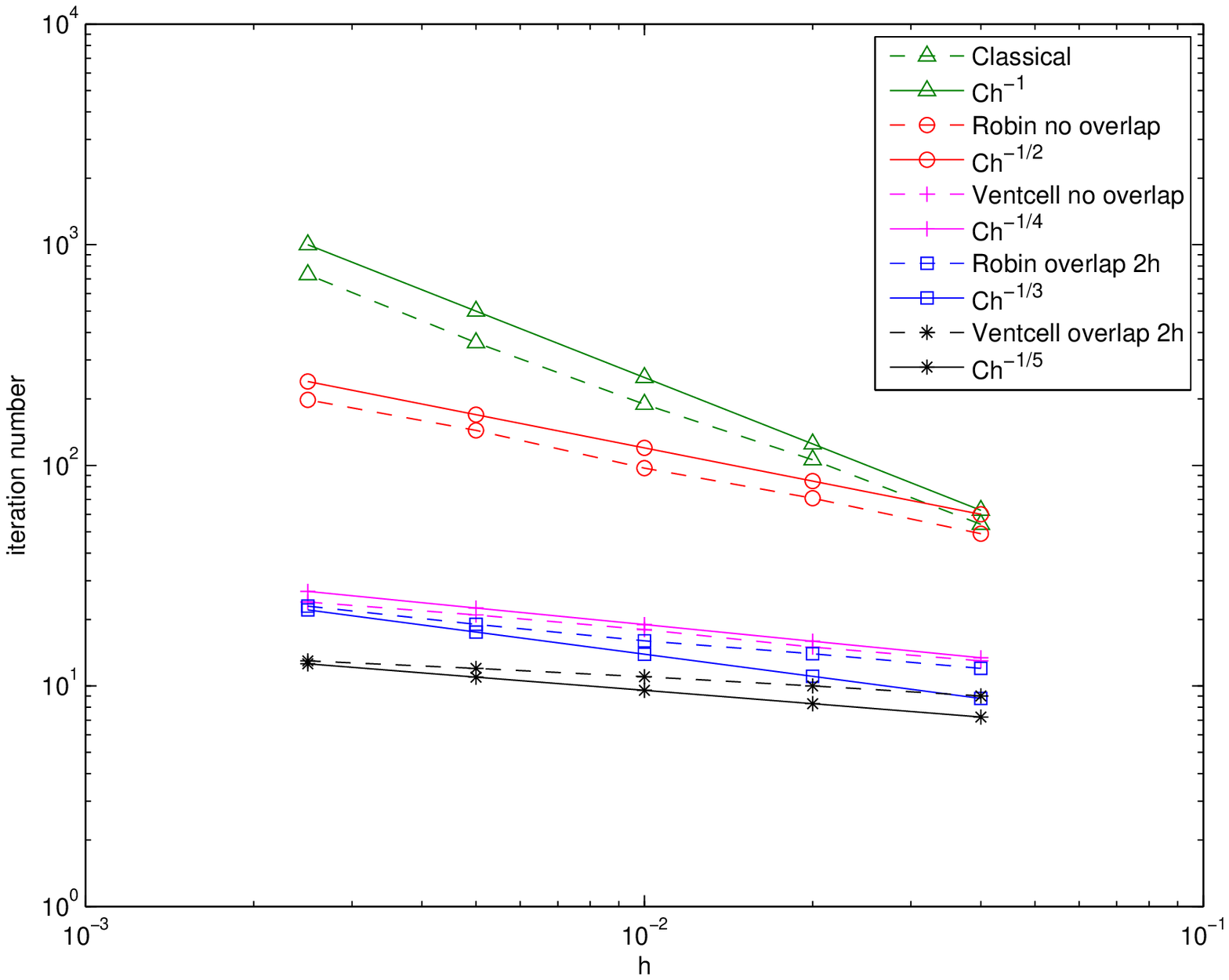}
  \includegraphics[width=0.48\textwidth]{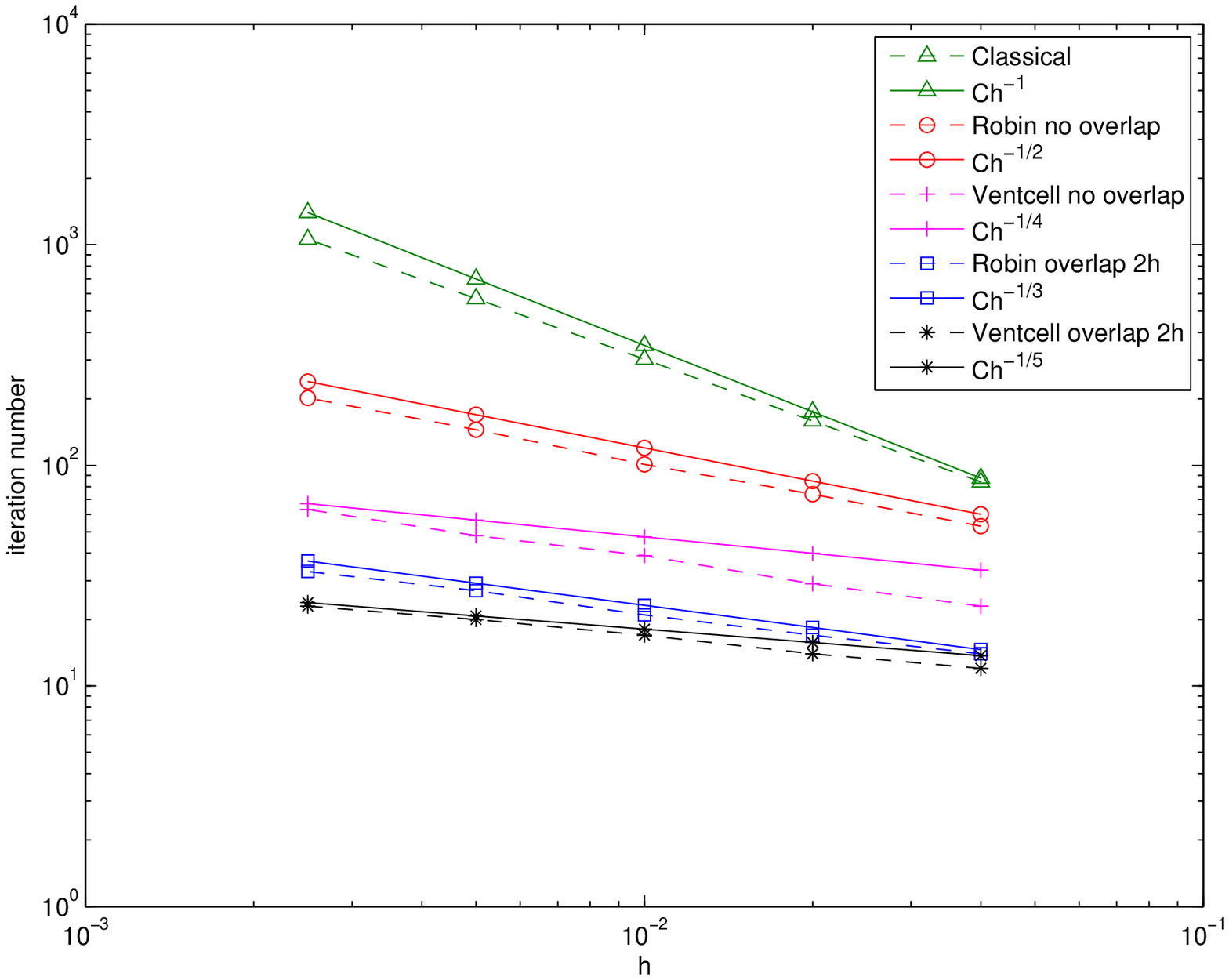}
  \includegraphics[width=0.48\textwidth]{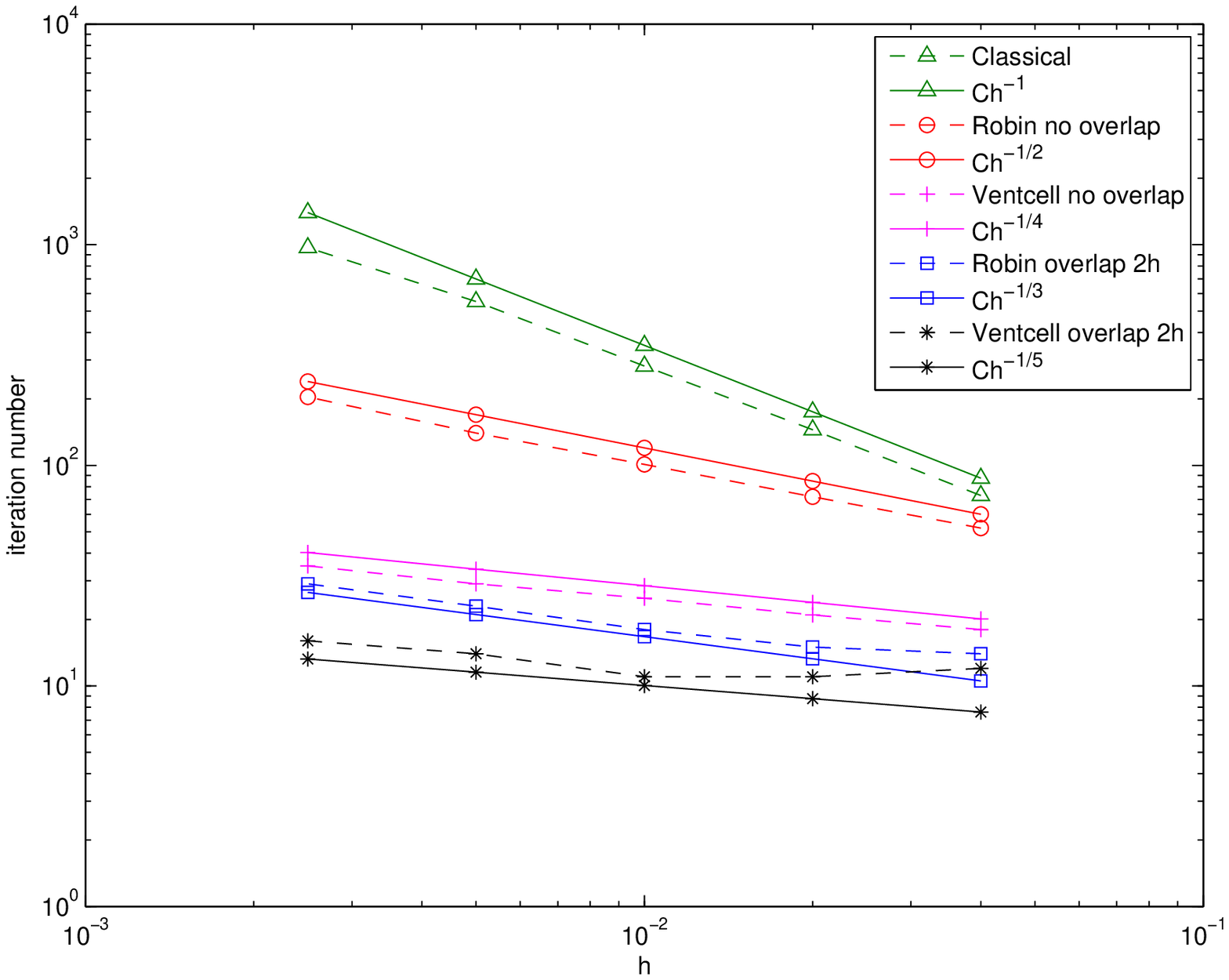}
  \includegraphics[width=0.48\textwidth]{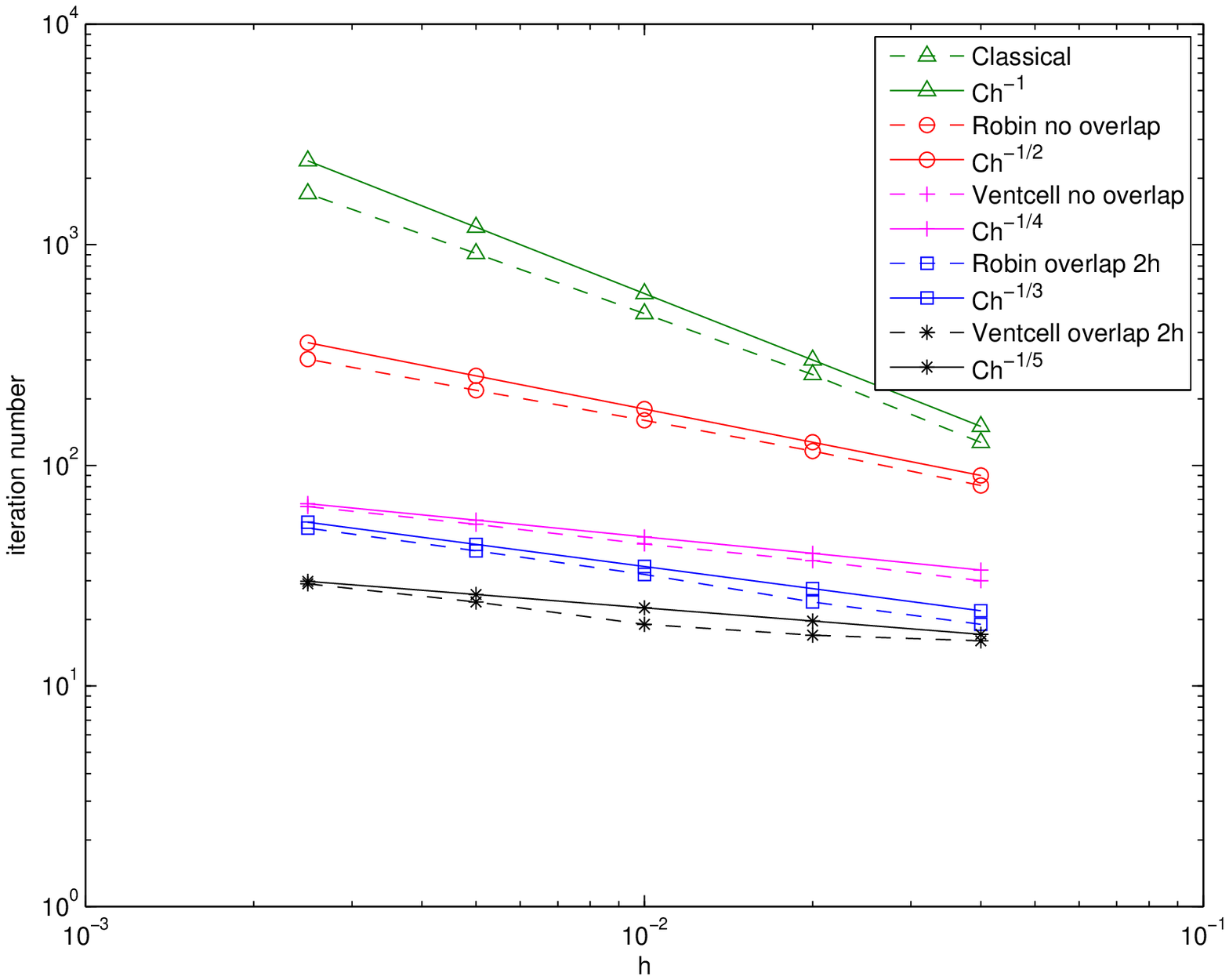}
  \caption{Plots of the iteration numbers from Table \ref{Table1} and
    \ref{Table2} when the methods are used iteratively, and
    theoretically predicted rates. Top left $2\times 1$ subdomains,
    Top right $2\times 2$ subdomain, bottom left $4\times 1$
    subdomains and bottom right $4\times 4$ subdomains}
  \label{Figure1}
\end{figure}
We see that our asymptotic analysis for the two subdomain case also
predicts quite well the behavior of the algorithms in the case of many
subdomains. 

Next, we investigate the setting of an explicit method (Forward Euler
with mass lumping), where $\Delta t=h^2/4$. We show in Table
\ref{Table3} and \ref{Table4} the number of iterations needed to
reduce the relative residual again by a factor of $10^{-6}$.
\begin{table}
\centering
\begin{tabular}{|c|c|cccc|cccc|}
\hline
\multicolumn{2}{|c|}{}& \multicolumn{4}{c|}{Iterative}&
\multicolumn{4}{c|}{GMRES}\\
\hline
\multicolumn{2}{|c|}{h} & 0.04 & 0.02 & 0.01 & 0.005 & 0.04 & 0.02 &
0.01 & 0.005 \\
\hline
\hline
\multirow{4}{*}{Robin} &2x1  & 57 & 85 & 117 & 176 & 24 & 31 & 36 & 44  \\
&2x2  & 59 & 87 & 117 & 174 & 25 & 32 & 39 & 48\\
&4x1  & 63 & 86 & 121 & 170 & 26 & 30 & 37 & 44 \\
&4x4  & 62 & 84 & 123 & 166 & 26 & 31 & 40 & 48\\
\hline
\multirow{4}{*}{Ventcell} &2x1  & 20 & 22& 25 & 28 & 12 & 13 & 15 & 16\\
&2x2  & 22 & 25 & 26 & 30 & 13 & 14 & 16 & 18 \\
&4x1  & 21 & 22 & 25 & 29 & 12 & 14 & 15 & 16\\
&4x4  & 23 & 27 & 26 & 34 & 15 & 16 & 18 & 19\\
\hline
\end{tabular}
    \caption{Number of iterations for an explicit time discretization
      setting $\Delta t = \frac{h^2}{4}$, without overlap}
    \label{Table3}
\end{table}

\begin{table}
\centering
\begin{tabular}{|c|c|cccc|cccc|}
\hline
\multicolumn{2}{|c|}{}& \multicolumn{4}{c|}{Iterative}&
\multicolumn{4}{c|}{GMRES}\\
\hline
\multicolumn{2}{|c|}{h} & 0.04 & 0.02 & 0.01 & 0.005 & 0.04 & 0.02 &
0.01 & 0.005 \\
\hline
\hline
\multirow{4}{*}{Robin} &2x1  & 13 & 16 & 20 & 24 & 8 & 9 & 10 & 10  \\
&2x2  & 13 & 16 & 19 & 23 & 9 & 10 & 11 & 12\\
&4x1  & 14 & 18 & 20 & 24 & 9 & 10 & 12 & 12 \\
&4x4  & 14 & 18 & 20 & 23 & 10 & 13 & 15 & 16 \\
\hline
\multirow{4}{*}{Ventcell} &2x1  & 9 & 10 & 11 & 13 & 6 & 8 & 9 & 10\\
&2x2  & 9 & 10 & 11 & 13 & 7 & 8 & 9 & 10\\
&4x1  & 9 & 10 & 11 & 14 & 7 & 8 & 9 & 10\\
&4x4  & 11 & 11 & 12 & 14 & 8 & 9 & 10 & 11\\
\hline
\multirow{4}{*}{Classical} &2x1  & 25 & 46 & 88 & 169 & 17 & 27 & 43 & 66\\
&2x2  & 33 & 63 & 122 & 235 & 21 & 34 & 54 & 83\\
&4x1  & 25 & 48 & 91 & 176 & 17 & 27 & 43 & 66\\
&4x4  & 36 & 70 & 136 & 263 & 22 & 36 & 58 & 89\\
\hline
\end{tabular}
    \caption{Number of iterations for an explicit time discretization
      setting $\Delta t = \frac{h^2}{4}$, with overlap $2h$}
    \label{Table4}
\end{table}


We illustrate our asymptotic results now in Figure \ref{Figure2} by
plotting in dashed lines the iteration numbers from Table \ref{Table3}
and \ref{Table4} in log-log scale, and we add the theoretically
predicted growth of the iteration numbers.
\begin{figure}
  \centering
  \psfrag{iteration number}[][]{\footnotesize interation number}
  \psfrag{h}[][]{\footnotesize $h$}
  \includegraphics[width=0.48\textwidth]{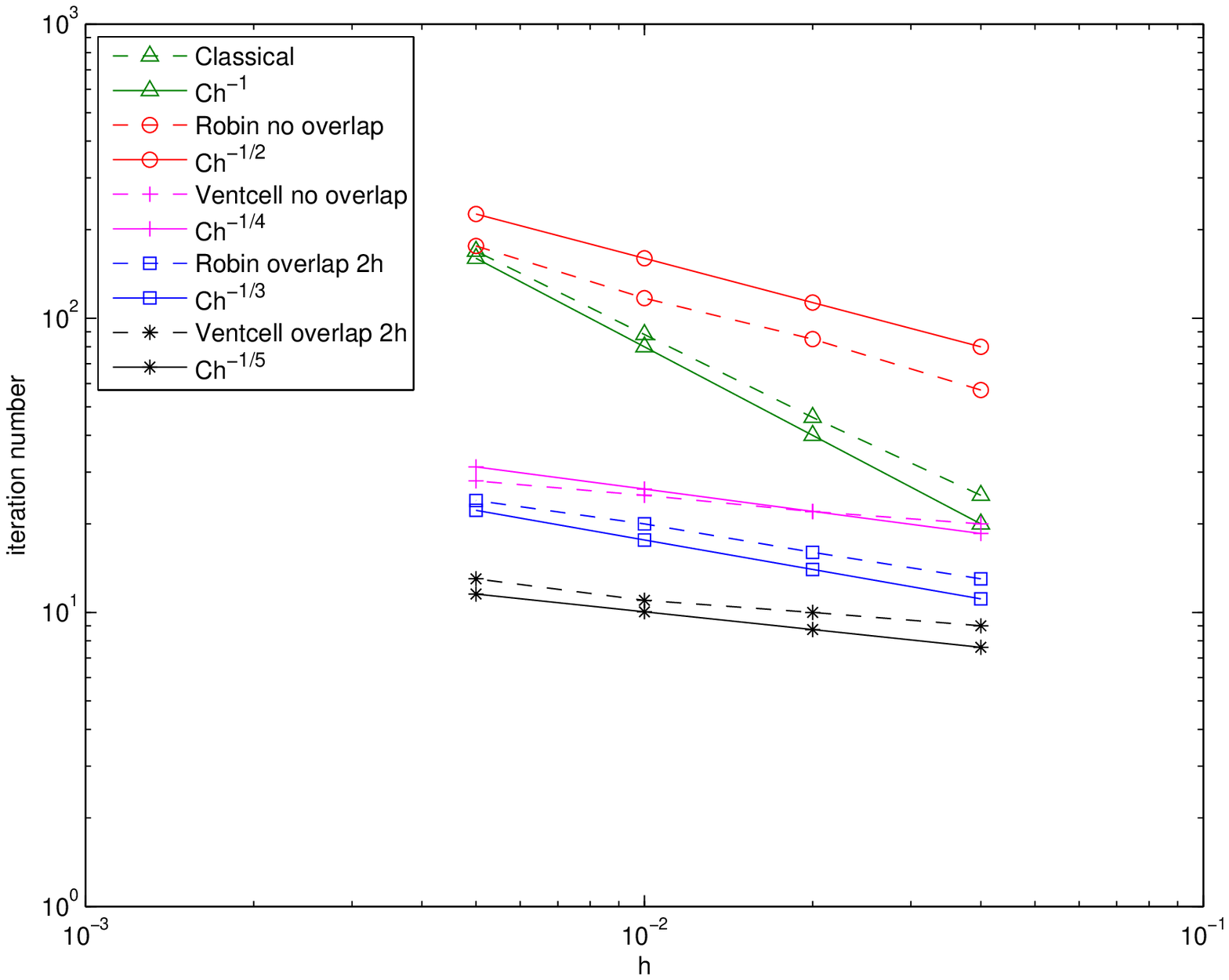}
  \includegraphics[width=0.48\textwidth]{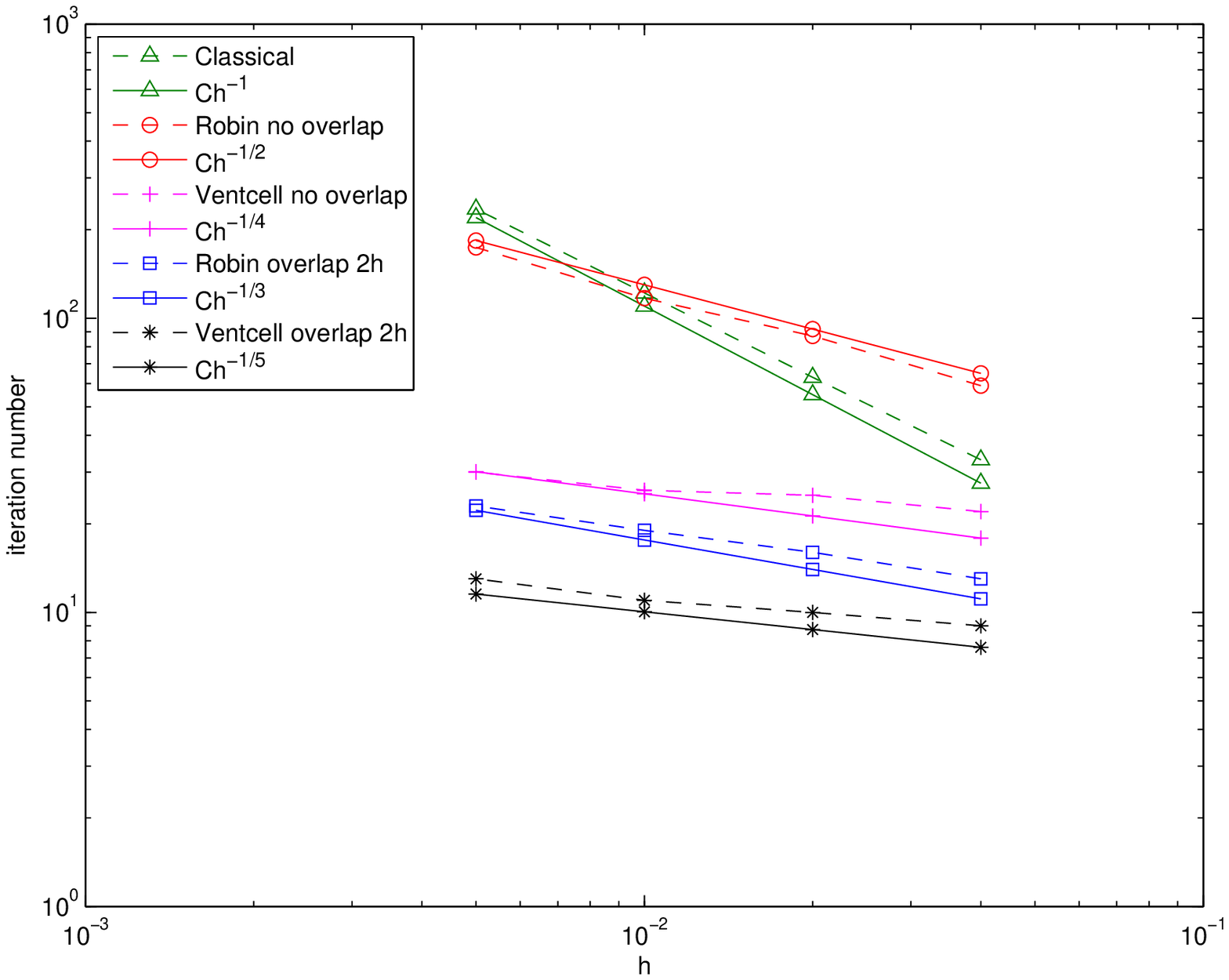}
  \includegraphics[width=0.48\textwidth]{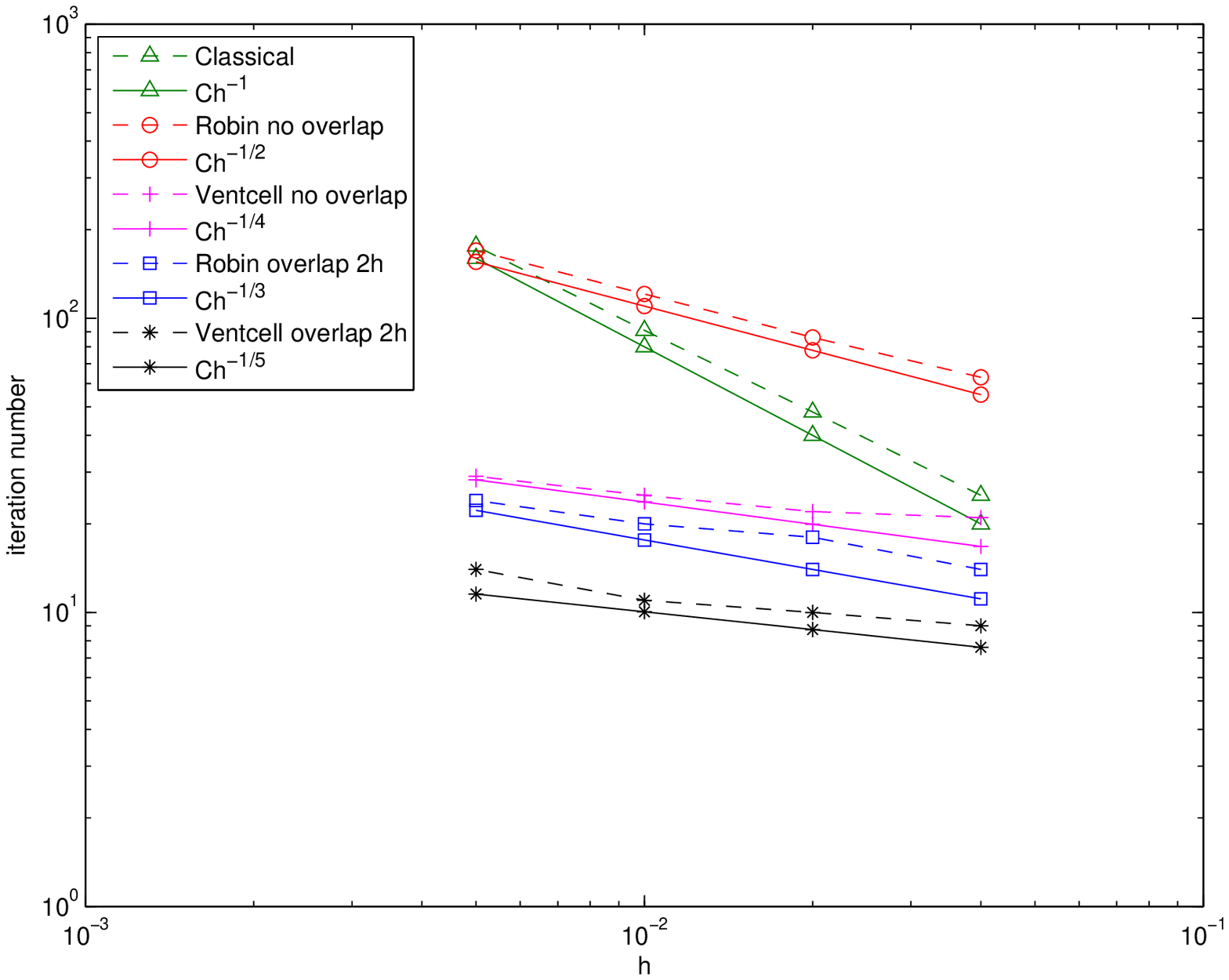}
  \includegraphics[width=0.48\textwidth]{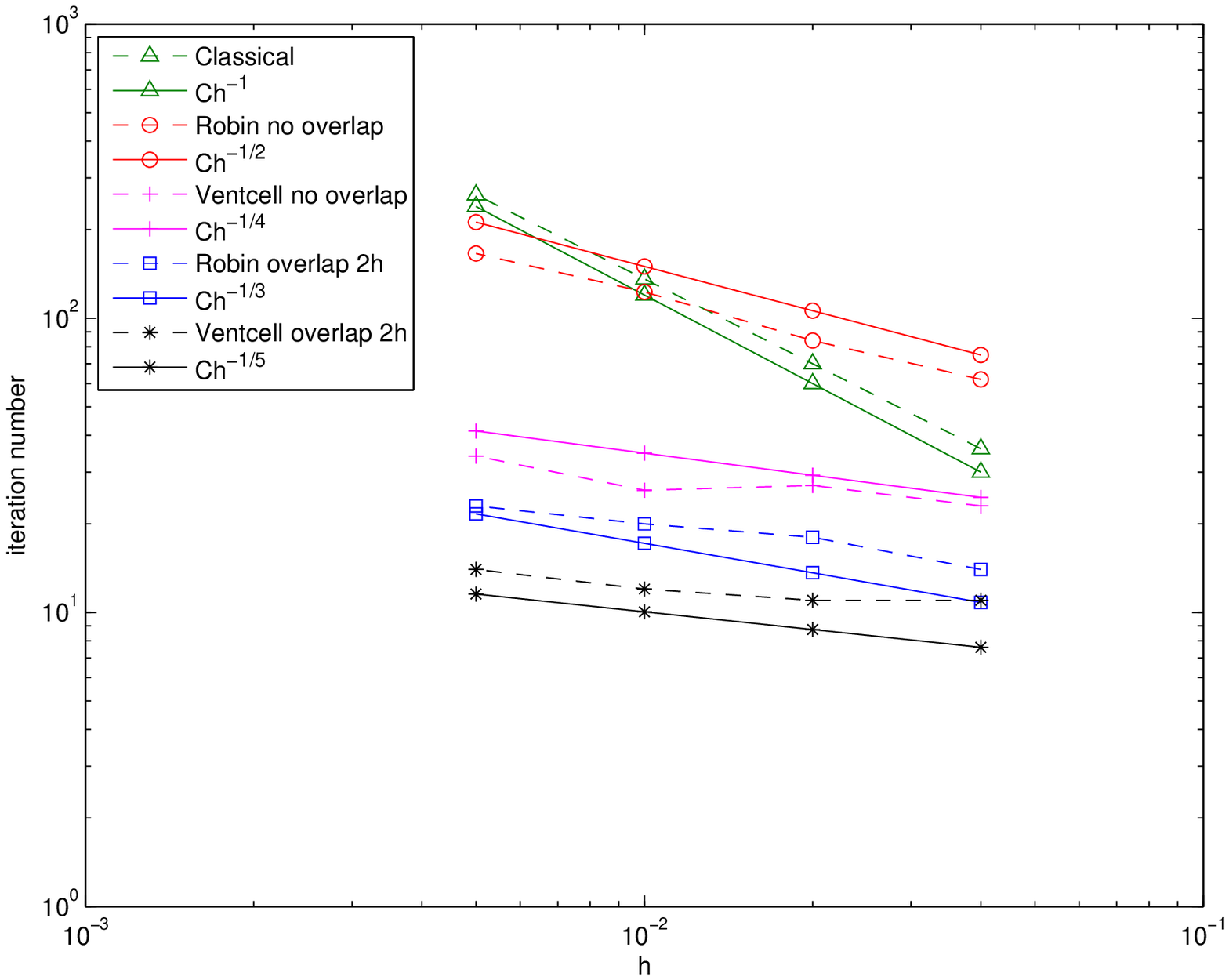}
  \caption{Plots of the iteration numbers from Table \ref{Table3} and
    \ref{Table4} when the explicitly discretized methods are used
    iteratively, and theoretically predicted rates. Top left $2\times
    1$ subdomains, Top right $2\times 2$ subdomain, bottom left
    $4\times 1$ subdomains and bottom right $4\times 4$ subdomains}
  \label{Figure2}
\end{figure}
As in the implicit case shown earlier, the asymptotic behavior we
observe follows our analysis of the two subdomain case, also in the
experiments with many subdomains.

\section{Conclusion}

We provide in this paper the complete asymtotically optimized closed
form transmission conditions for optimized Schwarz waveform relaxation
algorithms applied to advection reaction diffusion problems in higher
dimensions. We showed the results for the case of two spatial
dimensions, but the extension to higher dimensions $d>2$ from there is
trivial, it suffices to replace the Fourier variable contributions
$k^2$ by $||\vec{k}||^2$, and $ck$ by $\vec{c}\cdot\vec{k}$, which
implies to replace in the asymptotic analysis the highest frequency
estimate $k_M=\frac{\pi}{h}$ by $k_M=\frac{\sqrt{d-1}\pi}{h}$, or
replacing $\pi$ by $\sqrt{d-1}\pi$ in the final asymptotically
optimized closed form formulas. The formulas for Robin and Vencel
conditions are derived such that limits to pure diffusion can be
taken, and therefore also the associated time dependent heat equation
optimization problems are solved by our formulas. The formulas are
equally good for advection dominated problems, although one has to pay
attention there to have fine enough mesh sizes to resolve boundary
layers, in order for the asymptotically optimized formulas to be
valid. We extensively tested our algorithms numerically, see also
\cite{Gander:2010:OSW} for more scaling experiments, and these tests
indicate that our theoretical asymtptotic formulas derived for two
subdomain decompositions are also very effective for more general
decompositions into many subdomains.


%
\bibliographystyle{plain}
\bibliography{paper} 


\end{document}